\newtheorem{theorem}{Theorem}[section]
\newtheorem{lemma}[theorem]{Lemma}
\newtheorem{proposition}[theorem]{Proposition}
\newtheorem{corollary}[theorem]{Corollary}
\newtheorem{conjecture}[theorem]{Conjecture}
\theoremstyle{definition}
\newtheorem{definition}[theorem]{Definition}
\newtheorem{example}[theorem]{Example}
\theoremstyle{remark}
\numberwithin{equation}{section}
\numberwithin{theorem}{section}
\theoremstyle{definition}
\newtheorem{notation and remark}[theorem]{Notation and Remark}
\newtheorem{definition and remark}[theorem]{Definition and Remark}
\DeclareMathOperator{\PP}{\mathbb{P}}
\DeclareMathOperator{\rank}{rank}
\DeclareMathOperator{\QR}{QR}
\DeclareMathOperator{\rankindex}{rank-index}
\DeclareMathOperator{\Char}{char}
\begin{document}

\title{On the rank index of projective curves of almost minimal degree}

\author{Jaewoo Jung}
\address[Jaewoo Jung]{Center for Complex Geometry, Institute for Basic Science (IBS), Daejeon, Republic of Korea}
\curraddr[Jaewoo Jung]{Global Basic Research Laboratory - Algebra and Geometry of Spaces of Tensors, and Applications (GBRL-AGSTA), Daegu Gyeongbuk Institute of Science and Technology (DGIST), 333 Techno Jungang-daero, Hyeonpung-eup, Dalseong-gun, Daegu 42988, Republic of Korea}
\email{jaewoojung@dgist.ac.kr}
\thanks{Corresponding author: Jaewoo Jung (jaewoojung@dgist.ac.kr)}

\author{Hyunsuk Moon}
\address[Hyunsuk Moon]{School of Mathematics, Korea Institute for Advanced Study(KIAS), Seoul 02455, Republic of Korea}
\curraddr[Hyunsuk Moon]{School of Mathematics, Statistics and Data Science, Sungshin Women's University, Seoul 02844, Replubic of Korea}
\email{hsmoon87@sungshin.ac.kr}

\author{Euisung Park}
\address[Euisung Park]{Department of Mathematics, Korea University, Seoul 136-701, Republic of Korea}
\email{euisungpark@korea.ac.kr}

\subjclass[2020]{14A25, 14H45, 14N05, 15A63, 16E45}
\keywords{apolarity, linear projections, low rank quadrics, projective curves with parametrization, property $\QR(k)$}

\begin{abstract}
In this article, we investigate the rank index of projective curves $\mathcal{C} \subset \mathbb{P}^r$ of degree $r+1$ when $\mathcal{C} = \pi_p (\tilde{\mathcal{C}})$ for  the standard rational normal curve $\tilde{\mathcal{C}} \subset \mathbb{P}^{r+1}$ and a point $p \in \mathbb{P}^{r+1} \setminus \tilde{\mathcal{C}}^3$ where $\tilde{\mathcal{C}}^k$ denotes the $k$-fold self-join of $\tilde{\mathcal{C}}$.
Here, the rank index of a closed subscheme $X \subset \mathbb{P}^r$ is defined to be the least integer $k$ such that the homogeneous ideal of $X$ can be generated by quadratic polynomials of rank $\leq k$. 
Our results show that the rank index of $\mathcal{C}$ is at most $4$, and it is exactly equal to $3$ when the projection center $p$ is a coordinate point of $\mathbb{P}^{r+1}$. 
We also investigate the case where $p \in \tilde{\mathcal{C}}^3 \setminus \tilde{\mathcal{C}}^2$. 
\end{abstract}

\maketitle

\section{Introduction}
\noindent Throughout this paper we work over an algebraically closed field $\mathbb{K}$ with 
 $\Char(\mathbb{K}) = 0$. 
According to \cite{MR4298641}, a non-degenerate projective variety $X\subset \mathbb{P}^r$ is said to satisfy \textit{property} $\QR(k)$ if its homogeneous ideal is generated by quadratic polynomials of rank at most $k$. 
When $X$ is cut out ideal-theoretically by quadrics, we define the rank index of $X$, denoted by $\rankindex(X)$, as the least integer $k$ such that $X$ satisfies property $\QR(k)$. 
Thus
$$\rankindex(X) \geq 3$$
since $X$ is irreducible and nondegenerate in $\PP^r$. 
Many classical constructions in projective geometry such as rational normal scrolls and Segre-Veronese varieties satisfy property $\QR(4)$ since their homogeneous ideals are generated by $2$-minors of one or several matrices of linear forms (cf. \cite{MR2391665}, \cite{MR1874542}, \cite{MR1614174}, \cite{MR2948471}, \cite{MR2378064}, etc). 
Recently, it has been found that there are many projective varieties whose rank index is equal to $3$, which is the minimal value for irreducible varieties.
The $d$-th Veronese embedding of $\PP^n$ satisfies property $\QR(3)$ for all $n \geq 1$ and $d \geq 2$ if $\Char(\mathbb{K})\neq 3$ (cf. \cite{MR4298641}). 
Every linearly normal curve of arithmetic genus $g$ and degree $\geq 4g+4$ satisfies property $\QR(3)$ (cf. \cite{MR4496651}). 
Also in \cite{MR4612424} the authors found the rank index of various projective varieties, including rational normal scrolls and del Pezzo varieties. 
For example, if $X$ is a smooth rational normal scroll of dimension $\geq 2$, then $\rankindex(X) = 4$.

The main purpose of this paper is to investigate the rank index of projective curves $\mathcal{C} \subset \PP^r$ of degree $r+1$. 
To put things in perspective, we recall some of the previously known results regarding this problem. 
To give precise statements, we require some notation and definitions. 
Let $\mathcal{C} \subset \PP^r$ be a nondegenerate projective curve of degree $d$. 
Then $d \geq r$ and equality is attained if and only if $\mathcal{C}$ is a rational normal curve, which is obviously very well-understood. 
In particular, it holds that $\rankindex(\mathcal{C}) = 3$ (cf. \cite[Corollary 2.4]{MR4298641}). 
If $d=r+1$, then the following two cases may occur:
\smallskip
\begin{enumerate}
    \item[ ] \textit{Case} $1.$ $\mathcal{C}$ is a linearly normal curve of arithmetic genus one;
    \item[ ] \textit{Case} $2.$ $\mathcal{C}$ is an isomorphic projection of a rational normal curve $\widetilde{\mathcal{C}} \subset \PP^{r+1}$.
\end{enumerate}
\smallskip

\noindent In the former case, it is already known that $\rankindex(\mathcal{C}) = 3$ (cf. \cite[Theorem 1.1]{MR4496651}). 
In the latter case, let
$$\tilde{\mathcal{C}}=\mathcal{C}_{r+1} = \left\{[s^{r+1} : s^{r}t : \cdots : t^{r+1}]~|~[s:t]\in \mathbb{P}^1\right\} \subset \mathbb{P}^{r+1}, d\geq 4$$
be the standard rational normal curve of degree $r+1$, and let $\tilde{\mathcal{C}}^k$ denote the \textit{$k$-th self-join of $\tilde{\mathcal{C}}$}.
That is, $\tilde{\mathcal{C}}^k$ is the closure of the set of points lying in $(k-1)$-dimensional linear
subspaces spanned by general collections of $k$ points in $\tilde{\mathcal{C}}$. 
For joins of projective varieties, we refer to \cite{MR2090676}.
Then our rational curve $\mathcal{C}\subseteq \mathbb{P}^r$ of degree $d = r+1$ may be written in the form:
$$\mathcal{C} = \pi_p (\tilde{\mathcal{C}})$$
for some $p \in \PP^{r+1} \setminus \tilde{\mathcal{C}}^2$ where $\pi_p : \PP^{r+1} \dashrightarrow \PP^r$ is the linear projection from $p$. 
For a point $p \in \PP^{r+1}$, the integer
\begin{equation*}
\mbox{rank}_{\tilde{\mathcal{C}}} (p) = \min \{ k \mid p \in \tilde{\mathcal{C}}^k \},
\end{equation*}
is called the \textit{rank of $p$ with respect to} $\tilde{\mathcal{C}}$, or simply the \textit{$\tilde{\mathcal{C}}$-rank} of $p$.
Note that if $p_c$ is the $c$-th coordinate point of $\mathbb{P}^{r+1}$ for some $0 \leq c \leq \lfloor (r+1)/2 \rfloor$, then $\mbox{rank}_{\tilde{\mathcal{C}}} (p) = c+1$ (cf. \cite[Section 5]{MR2299577}). 
It is well known that various properties of $\mathcal{C}$ are determined by the rank of $p$ with respect to $\tilde{\mathcal{C}}$. 
For example, if $p \in \PP^{r+1} \setminus \tilde{C}$, then $\mathcal{C} = \pi_p (\tilde{\mathcal{C}})$ is an isomorphic projection of $\widetilde{\mathcal{C}}$ if and only if $\mbox{rank}_{\tilde{\mathcal{C}}} (p) \geq 3$. 
Moreover, the graded Betti numbers of $\mathcal{C}$ are uniquely determined by $\mbox{rank}_{\tilde{\mathcal{C}}}(p)$ (cf.~\cite[Theorem~1.1]{MR2299577}, and \cite[Theorem~1.1]{MR3463203} for explicit formulas of the Betti numbers).
In particular, the homogeneous ideal of $\mathcal{C}$ is generated by quadratic polynomials if and only if $\mbox{rank}_{\tilde{\mathcal{C}}} (p) \geq 4$. 
Also there is a surprising connection between some invariants of $\mathcal{C}$ in real algebraic geometry and the rank of $p$ with respect to $\tilde{\mathcal{C}}$ (cf. \cite{MR4752129}).\\

Now, let us state our first main result in this paper.

\begin{theorem}\label{thm:main}
For the standard rational normal curve $\widetilde{\mathcal{C}} \subset \PP^{r+1}$ and a point $p \in \PP^{r+1}\setminus \widetilde{\mathcal{C}}^2$ with $\mbox{rank}_{\tilde{\mathcal{C}}} (p) \geq 4$, let
$$\mathcal{C} = \pi_p (\tilde{\mathcal{C}}) \subseteq \PP^r $$
be a smooth rational curve of degree $r+1$. Then

\begin{enumerate}
    \item\label{thm:mainitem1} $\rankindex(\mathcal{C}) \leq 4$.
    \item\label{thm:mainitem2} Suppose that $p = p_c$ is the c-th coordinate point of $\mathbb{P}^{r+1}$ (and hence $3 \leq c \leq r-2$). Then $\rankindex(\mathcal{C}) = 3$.
\end{enumerate}
\end{theorem}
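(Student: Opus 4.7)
The plan is to prove both parts by constructing explicit spanning sets of the degree-two piece $I(\mathcal{C})_2$ using quadrics of the prescribed rank, relying on the fact recalled from \cite{MR2299577} that $I(\mathcal{C})$ is already generated in degree two whenever $\mbox{rank}_{\tilde{\mathcal{C}}}(p) \geq 4$.

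For part~(1), I would first apply a linear change of coordinates on $\PP^{r+1}$ sending $p$ to a coordinate vector $p_c$; this preserves the rank of every quadric since rank is $\mathrm{GL}$-invariant, and it transforms $\tilde{\mathcal{C}}$ into another rational normal curve $\tilde{\mathcal{C}}'$ whose ideal is still generated by $2 \times 2$ minors of a $2 \times (r+1)$ matrix of linear forms, each minor having rank at most $4$. The ideal $I(\mathcal{C})$ is the elimination ideal obtained by intersecting $I(\tilde{\mathcal{C}}')$ with the subring $\mathbb{K}[x_0, \ldots, \hat{x}_c, \ldots, x_{r+1}]$. I would produce rank-$\leq 4$ generators of $I(\mathcal{C})_2$ by taking carefully chosen $\mathbb{K}$-linear combinations of the transformed minors so that the $x_c$-contributions cancel: by pairing minors whose $x_c$-parts are proportional, the cancellation yields a quadric supported on $\{x_0, \ldots, \hat{x}_c, \ldots, x_{r+1}\}$ whose Gram matrix inherits a rank bound of $4$ from the constituent minors.

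For part~(2), $p = p_c$ is a coordinate point and $I(\mathcal{C})_2$ is spanned by the binomials $x_i x_j - x_k x_l$ with $i+j = k+l$ and $i,j,k,l \in \{0,\ldots,r+1\} \setminus \{c\}$. I would partition these by the common sum $n := i+j$ and handle three cases. When $n$ is even with $n \neq 2c$, the half-index $m := n/2$ is admissible, $x_i x_j - x_m^2$ and $x_k x_l - x_m^2$ are both rank-$3$ quadrics in $I(\mathcal{C})_2$, and their difference is $x_i x_j - x_k x_l$. When $n$ is odd, I would telescope $x_i x_j - x_k x_l$ along adjacent Hankel minors and reduce each odd-sum adjacent minor to a combination of rank-$3$ quadrics of even sum via an algebraic identity that borrows from sums $n \pm 1$, using that $c \leq r-2$ provides the necessary room. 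When $n = 2c$, the natural halved index $m = c$ is forbidden; I would instead construct rank-$3$ quadrics of the form $L_1 L_2 - L_3^2$ with $L_1, L_2, L_3$ linear forms supported on $\{x_{c-d}, x_{c+d} : d \geq 1\}$, arranged so that $L_1 L_2 - L_3^2 \in I(\mathcal{C})$ and, modulo the quadrics from the previous cases, captures the binomials $x_{c-d_1} x_{c+d_1} - x_{c-d_2} x_{c+d_2}$. The hypothesis $3 \leq c \leq r-2$ is used crucially to ensure at least three admissible indices on each side of $c$.

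I expect the case $n = 2c$ to be the main obstacle. Any quadric supported only on monomials $x_i x_{2c-i}$ with $i \neq c$ has rank $\geq 4$ by a direct Gram-matrix computation, so the rank-$3$ quadrics required there must necessarily mix several sum classes, and their explicit construction is the delicate step. The proof concludes with a dimension count confirming that the collection of rank-$3$ quadrics exhibited in the three cases spans the $\bigl((r^2-r-4)/2\bigr)$-dimensional space $I(\mathcal{C})_2$; once spanning is verified, the lower bound $\rankindex(\mathcal{C}) \geq 3$ coming from the nondegeneracy of $\mathcal{C}$ in $\PP^r$ yields the desired equality.
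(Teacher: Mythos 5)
Your proposal does not follow the paper's route, and both halves have genuine gaps.

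For part (1), the paper does not eliminate $x_c$ from a transformed Hankel ideal. Instead (\Cref{thm:parametrization} and \Cref{cor:QR(4)}) it reads off from the apolar ideal $(f_p)^\perp=(g_1,g_2)$ an explicit parametrization of $\mathcal{C}$ showing that $\mathcal{C}$ is a divisor of class $H+2F$ on the rational normal scroll $\mathcal{S}(d_1-2,d_2-2)$; then $I(\mathcal{C})_2$ is spanned by the $2\times 2$ minors cutting out $\mathcal{S}$ together with quadrics that are written explicitly as $L_{1,j}L_2-L_{3,j}L_4$ for linear forms $L_\bullet$, which are manifestly of rank $\leq 4$. Your elimination scheme has an unjustified step that I do not see how to repair: if $Q_1$ and $Q_2$ are two rank-$4$ minors whose $x_c$-parts are proportional, the combination $Q_1-\lambda Q_2$ killing the $x_c$-contribution is a priori of rank up to $8$; cancellation of the $x_c$-terms does not make the Gram matrix ``inherit'' the rank bound of the constituents. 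Moreover, for a general center $p$ with $\mbox{rank}_{\tilde{\mathcal{C}}}(p)\geq 4$ it is not established that the elimination ideal in degree $2$ is even spanned by such two-term combinations. So the central claim of your part (1) is asserted, not proved.

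For part (2), your three-way split by the index $n=i+j$ is in the same spirit as the paper, and the even case $n\neq 2c$ is correct. But the two hard cases are exactly where you stop. The paper's engine is the identity of \Cref{ijklLemma}: the rank-$\leq 3$ quadric $(x_i+x_{i+k})(x_{j-k}+x_j)-(x_{(i+j-k)/2}+x_{(i+j+k)/2})^2$, produced by the $Q_{A,B}$-map, equals the sum of two rank-$3$ binomials and the three-term combination $x_ix_j+x_{i+k}x_{j-k}-2x_{(i+j-k)/2}x_{(i+j+k)/2}$; choosing $k$ odd handles odd $n$, and choosing $k$ even nonzero handles $n=2c$ by routing the ``middle'' indices to $c\pm k/2$. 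You correctly observe that any quadric supported in the single sum class $2c$ has rank $\geq 4$, so class-mixing is forced, but the quadrics $L_1L_2-L_3^2$ you invoke are never constructed, and you yourself label this ``the delicate step.'' You are also missing the further identities of \Cref{0514Lemma} and \Cref{0523Lemma} (again via $Q_{A,B}$ with more elaborate sections), which the paper needs for certain residual generators of index $d+c-1$ and for the base cases of its induction on $d$ in \Cref{thm:monomialProjection}; a pure sum-class analysis without these does not close. As it stands, part (2) is an outline whose decisive constructions are deferred.
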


The proofs of statements \eqref{thm:mainitem1} and \eqref{thm:mainitem2} of \Cref{thm:main} are given respectively in \Cref{sec:gens} and \Cref{sec:main}.

One of the main reasons why determining the rank index of the curve $\mathcal{C}$ in \Cref{thm:main} is interesting is that, in contrast to all previously studied varieties satisfying property $\QR(3)$, which are linearly normal, the curve $\mathcal{C}$ is not linearly normal. 
As a consequence, its geometric and algebraic properties depend continuously on the choice of the projection center $p$.

In the proof of \Cref{thm:main}\eqref{thm:mainitem1}, we use the fact that there is a unique rational normal surface scroll $\mathcal{S}$ that contains $\mathcal{C}$.
Indeed, the homogeneous ideal of $\mathcal{C}$ is generated by that of $\mathcal{S}$ and the quadratic polynomials obtained from the equation of $\mathcal{C}$ as a divisor of $\mathcal{S}$. 
Note that the existence of such a surface $\mathcal{S}$ can be found in \cite[Proposition 1.2]{MR951640} and \cite[Theorem 7.3]{MR2274517} and the uniqueness of $\mathcal{S}$ is recently proven in \cite[Theorem 5.2]{MR4816776}. 
We provide a new constructive proof of the existence of $\mathcal{S}$ by using the apolar ideal of the binary form corresponding to the projection center $p$ in \Cref{sec:gens}.

The proof of \Cref{thm:main}\eqref{thm:mainitem2} is obtained by combining induction on the degree of $\mathcal{C}$ with the investigation of rank $3$ quadratic equations of $\tilde{\mathcal{C}}$ that are also contained in the homogeneous ideal of $\mathcal{C}$.

Regarding \Cref{thm:main}, one can naturally ask whether there exists a point $p \in \PP^{r+1} \setminus \tilde{\mathcal{C}}^3$ such that the rank index of $\mathcal{C} = \pi_p (\tilde{\mathcal{C}})$ is equal to $4$. We do not have such an example. To the contrary, \Cref{thm:main}.\eqref{thm:mainitem2} leads us to formulate the following conjecture.

\begin{conjecture}\label{con:1}
Let $\mathcal{C}  \subset \PP^r $ be as in \Cref{thm:main}. Then  $\rankindex(\mathcal{C}) = 3$.
\end{conjecture}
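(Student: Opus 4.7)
The plan is to extend the approach of \Cref{thm:main}.\eqref{thm:mainitem2} from the coordinate-point case to arbitrary projection centers, by combining the apolarity framework developed in \Cref{sec:gens} with an explicit construction of rank-$3$ quadrics.  Identify $p \in \PP^{r+1}$ with its associated binary form $F_p \in \mathbb{K}[s,t]_{r+1}$, and identify quadrics in $I(\tilde{\mathcal{C}})_2$ with elements of $\ker(\mu \colon \Sym^2 \mathbb{K}[u,v]_{r+1} \to \mathbb{K}[u,v]_{2r+2})$; then $I(\mathcal{C})_2$ consists of those kernel elements that additionally annihilate $F_p$ under the apolar pairing.  Rank-$3$ quadrics in $I(\tilde{\mathcal{C}})_2$ admit a natural family parametrization: up to scalars, they arise from a choice of linearly independent forms $\ell, m \in \mathbb{K}[u,v]_1$ and integers $a,b,c$ with $a+b = 2c$, yielding
\[
Q_{\ell,m,a,b,c} \;=\; \bigl(\ell^{r+1-a} m^{a}\bigr)\cdot\bigl(\ell^{r+1-b} m^{b}\bigr) \,-\, \bigl(\ell^{r+1-c} m^{c}\bigr)^{2}.
\]
In the coordinate-point case $(\ell, m) = (s, t)$ one recovers the rank-$3$ quadrics used to prove \Cref{thm:main}.\eqref{thm:mainitem2}.

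The first main step will be to show that, as $(\ell, m)$ and $(a, b, c)$ vary, the subfamily of $Q_{\ell, m, a, b, c}$ annihilating $F_p$ spans $I(\mathcal{C})_2$.  This reduces to a concrete linear-algebra computation on $\Sym^2 \mathbb{K}[u,v]_{r+1}$: the apolarity condition on $F_p$ imposes a single linear condition on $Q$, and one must verify that the resulting hyperplane section of the rank-$3$ family spans $I(\mathcal{C})_2$.  A complementary tool is a semicontinuity argument.  The locus
\[
U \;=\; \bigl\{\, p \in \PP^{r+1} \,:\, \rank_{\tilde{\mathcal{C}}}(p) \geq 4 \text{ and } \rankindex(\pi_p(\tilde{\mathcal{C}})) \leq 3 \,\bigr\}
\]
is open in the stratum $\{\rank_{\tilde{\mathcal{C}}}(p) \geq 4\}$: since the graded Betti numbers of $\pi_p(\tilde{\mathcal{C}})$ depend only on $\rank_{\tilde{\mathcal{C}}}(p)$ by \cite{MR2299577, MR3463203}, $\dim I(\pi_p(\tilde{\mathcal{C}}))_2$ is constant on each rank stratum, and the condition that its rank-$\leq 3$ quadrics span it is lower semicontinuous in $p$.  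As \Cref{thm:main}.\eqref{thm:mainitem2} exhibits coordinate points in $U$, and the torus and $\mathrm{PGL}_2$ actions on $\PP^{r+1}$ spread these densely within each stratum, $U$ contains a dense open part of the stratum.

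The main obstacle will be upgrading this dense openness to the entire stratum.  For degenerate $p$ the subfamily $\{Q_{\ell, m, a, b, c}\}$ annihilating $F_p$ may collapse so that its span drops strictly below $\dim I(\mathcal{C})_2$.  Bridging this gap will require producing rank-$3$ quadrics outside the explicit family $Q_{\ell, m, a, b, c}$---for instance, via algebraic coincidences producing rank-$3$ linear combinations of rank-$4$ quadrics, or via a uniform construction indexed by the generators of the apolar ideal $F_p^{\perp}$, which is a complete intersection of type $(r + 2 - \rank_{\tilde{\mathcal{C}}}(p),\, \rank_{\tilde{\mathcal{C}}}(p))$.  Ruling out exceptional $p$ within $\{\rank_{\tilde{\mathcal{C}}}(p) \geq 4\}$ will be the crux of the conjecture.
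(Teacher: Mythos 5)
The statement you are addressing is \Cref{con:1}, which the paper leaves open: the authors explicitly say they have no example with $\rankindex(\mathcal{C})=4$ but also no proof, and the only supporting evidence in the paper is \Cref{thm:main}.\eqref{thm:mainitem2} (the coordinate-point case, \Cref{thm:monomialProjection}) together with the $\QR(4)$ bound of \Cref{cor:QR(4)}. So there is no proof in the paper to compare against, and your proposal, by your own admission, does not close the conjecture either: the final paragraph concedes that extending from a dense open subset of the stratum $\{\rank_{\tilde{\mathcal{C}}}(p)\geq 4\}$ to the whole stratum is unresolved, and that is precisely the content of the conjecture. A strategy whose crux is ``rule out exceptional $p$'' has not reduced the problem.

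Beyond the acknowledged gap, two concrete steps are wrong as stated. First, the descent condition is miscounted: a quadric $Q\in I(\tilde{\mathcal{C}})_2$ lies in $I(\mathcal{C})_2$ if and only if $p$ lies in the vertex of $Q$ (equivalently $M_Q\cdot\tilde p=0$, i.e.\ the associated bilinear form kills $F_p$ against \emph{everything}), which is $r+2$ linear conditions, not ``a single linear condition.'' This is consistent with the paper's dimension count $\dim I(\mathcal{C})_2=\binom{r}{2}-2=\binom{r+1}{2}-(r+2)$; requiring only $B_Q(F_p,F_p)=0$ would merely say $p\in Q$ and gives the wrong space. Any linear-algebra verification that the family $Q_{\ell,m,a,b,c}$ spans $I(\mathcal{C})_2$ must be run against this codimension-$(r+2)$ subspace, and it is far from clear that the rank-$3$ quadrics of $\tilde{\mathcal{C}}$ whose vertices contain a general $p$ span it --- indeed the paper's proof of \Cref{thm:monomialProjection} needs not only such quadrics but also rank-$3$ elements produced by the $Q_{A,B}$-map that are \emph{not} of the monomial form $(\ell^{r+1-a}m^a)(\ell^{r+1-b}m^b)-(\ell^{r+1-c}m^c)^2$ (see \Cref{0514Lemma} and \Cref{0523Lemma}). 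Second, the claim that the torus and $\mathrm{PGL}_2$ actions ``spread the coordinate points densely within each stratum'' is false: the subgroup of $\mathrm{PGL}_{r+2}$ preserving $\tilde{\mathcal{C}}$ is the $3$-dimensional image of $\mathrm{PGL}_2$, so the orbit of the finitely many coordinate points has dimension at most $3$, while the stratum has dimension $r+1\geq 5$. (Density of your locus $U$ would still follow from openness plus nonemptiness on the irreducible stratum, but that only restates the semicontinuity argument and again stops short of the boundary cases.)
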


Next, we consider the case where $\mbox{rank}_{\tilde{\mathcal{C}}} (p) = 3$ and so the homogeneous ideal of $\mathcal{C}$ fails to be generated by quadratic polynomials. Then, $\mathcal{C}$ admits a unique trisecant line, say $\mathbb{L}$. Moreover, the homogeneous ideal of the reducible curve
$$X := \mathcal{C} \cup \mathbb{L}$$
is generated by the quadratic polynomials in the homogeneous ideal of $\mathcal{C}$. (See \Cref{lem:trisecantline} for details.)
Note that $\mathcal{C}$ is $3$-regular (cf. \cite[Theorem 3]{MR1117635}). 
Regarding the rank index of $X$, we obtain the following interesting partial result.

\begin{theorem}\label{thm:main2}
Suppose that $r \geq 4$. Assume that $\rank_{\widetilde{\mathcal{C}}}(p) = 3$ and let $\mathcal{C}$, $\mathbb{L}$, and $X = \mathcal{C} \cup \mathbb{L}$ be as above. Then
\begin{enumerate}
    \item[{\rm (1)}] $\rankindex(X) \leq 4$.
    \item[{\rm (2)}] If $\mathcal{C} \cap \mathbb{L}$ has at most two points, or equivalently, if $\mathcal{C} \cap \mathbb{L}$ contains a point with multiplicity at least two, then $\rankindex(X) = 4$.
\end{enumerate}
\end{theorem}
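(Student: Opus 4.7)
\emph{Plan of proof.}

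For part~(1), I would imitate the scroll-plus-divisor argument of \Cref{thm:main}.\eqref{thm:mainitem1}. The (unique) rational normal surface scroll $\mathcal{S}\subset \PP^r$ of degree $r-1$ containing $\mathcal{C}$ (from \cite{MR951640,MR2274517,kimsome}, or the apolarity construction in \Cref{sec:gens}) is cut out by the $2$-minors of a $2\times(r-1)$ matrix of linear forms. These minors are rank-$4$ quadrics lying in $I(\mathcal{C})_2 = I(X)_2$, so they vanish on $\mathbb{L}$ and give $\mathbb{L}\subset \mathcal{S}$, hence $X\subset \mathcal{S}$; they form one block of rank-$4$ generators of $I(X)$. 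The remaining generators come from the equation of $X$ as a Cartier divisor on $\mathcal{S}$: I would compute the class $[X]\in \mathrm{Pic}(\mathcal{S})$ and expand the associated bidegree-$(2,\ast)$ polynomial along the scroll matrix exactly as in the proof of \Cref{thm:main}.\eqref{thm:mainitem1}, producing further rank-$\leq 4$ quadrics and giving $\rankindex(X)\leq 4$.

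For part~(2), the upper bound is (1), so the task is to prove $\rankindex(X)\geq 4$ under the hypothesis that $\mathcal{C}\cap\mathbb{L}$ has a point $q$ of multiplicity $\geq 2$. Since $I(X)$ is generated in degree two, this is equivalent to showing that the $\mathbb{K}$-vector space $I(X)_2$ is \emph{not} spanned by its rank-$\leq 3$ members. The main geometric input is that any rank-$3$ quadric $Q\in I(X)_2$ is a cone with vertex $\Lambda_Q=\mathrm{Sing}(Q)$ of codimension $3$, and the projection $\pi_{\Lambda_Q}\colon \PP^r\dashrightarrow \PP^2$ realises $Q$ as a smooth conic $C_Q$. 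Nondegeneracy of $\mathcal{C}$ makes $\pi_{\Lambda_Q}|_\mathcal{C}$ dominant and finite onto $C_Q$; on the other hand $\pi_{\Lambda_Q}(\mathbb{L})$ is a linear subspace of $\PP^2$ contained in $C_Q$, hence a single point, forcing $\mathbb{L}\cap\Lambda_Q\neq\emptyset$. Thus the length-$3$ scheme $\mathcal{C}\cap\mathbb{L}$ lies in one fibre of $\pi_{\Lambda_Q}|_\mathcal{C}$. Combined with the tangency $T_q\mathcal{C}=T_q\mathbb{L}=\mathbb{L}$, this forces $T_q\mathcal{C}\cap\Lambda_Q\neq \emptyset$, so $\pi_{\Lambda_Q}|_\mathcal{C}$ is ramified at $q$ in the direction of $\mathbb{L}$ for every rank-$3$ member of $I(X)_2$.

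The plan is then to convert this geometric ramification constraint into a proper-span statement. One promising route is to lift the problem to $\PP^{r+1}$ via $\pi_p^{\ast}$, which identifies $I(X)_2$ with the rank-preserving subspace of $I(\tilde{\mathcal{C}})_2$ whose members have $p$ as a singular point, and to parameterise the rank-$\leq 3$ quadrics in $I(\tilde{\mathcal{C}})$ using the factorisations $f^2=gh$ of binary forms of degree $r+1$ coming from the $\QR(3)$-structure of $\tilde{\mathcal{C}}$; in these coordinates both of the incidence conditions ``$p\in\Lambda_Q$'' and ``$\Lambda_Q$ meets $T_q\mathcal{C}$'' become explicit constraints on the $(f,g,h)$-parameter. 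A direct dimension count, coupled with the explicit divisorial generators produced in part~(1), should then exhibit a quadric in $I(X)_2$ outside the span. The hard step I anticipate is exactly this final comparison: the vertex-incidence condition is nonlinear in the coefficients of $Q$, so the obstruction cannot be detected by a single linear equation, and the plan is to use the local jet at $q$ of a distinguished divisorial generator from part~(1) as the witness quadric that provably cannot be realised from the rank-$\leq 3$ family.
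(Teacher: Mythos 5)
Your part (1) is essentially the paper's own argument: the bound $\rankindex(X)\le 4$ comes, exactly as in \Cref{cor:QR(4)}, from the $2$-minors of the scroll matrix together with the rank-$\le 4$ divisorial generators, and since $I(X)$ is by definition generated by $I(\mathcal{C})_2$ there is nothing more to check. That half of the proposal is fine.

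For part (2) there is a genuine gap. What you actually establish is a pointwise necessary condition on each individual rank-$3$ member $Q$ of $I(X)_2$, namely that its vertex $\Lambda_Q$ meets $\mathbb{L}=T_q\mathcal{C}$. This is a determinantal, hence nonlinear, condition on the coefficients of $Q$, and a nonlinear subvariety of $\PP(I(X)_2)$ can perfectly well span the whole space. The statement you need --- that the rank-$\le 3$ locus $\Phi_3(X)$ is contained in a \emph{hyperplane}, equivalently that some quadric of $I(X)_2$ lies outside the linear span of the rank-$\le 3$ members --- does not follow from your incidence condition, and you concede that this ``final comparison'' is exactly the step you have not carried out. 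That step is the entire content of part (2). The paper closes it by a purely algebraic device: working with the explicit scroll-adapted generators from part (1), with $Q_0=z_0z_3-z_1z_2$ and $Q_1=z_0z_4-z_1z_3$, it forms the generic symmetric matrix $M_Q$ of $Q=\sum_i a_iQ_i$ and exhibits a $4\times 4$ submatrix (rows and columns indexed by $z_0,z_1,z_2,z_3$) whose determinant is the perfect power $a_0^4$; hence $a_0\in\sqrt{I(4,M_Q)}$, and by \Cref{prop:computation of the rank index} the linear form $a_0$ cuts out a hyperplane containing every rank-$\le 3$ quadric, so $z_0z_3-z_1z_2$ is the witness. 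A second minor then gives $a_1\in\sqrt{I(4,M_Q)}$ and the refined count $\delta(X,4)-\delta(X,3)\ge 2$ in the triple-point case, with the analogous single minor handling the double-plus-simple-point case. If you want to salvage your geometric setup, you would still have to show that your vertex-incidence condition degenerates to a linear condition on $I(X)_2$ --- which is what the fourth power $a_0^4$ is secretly doing --- and that is not automatic; as written, the proposal proves part (1) but not part (2).
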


The proof of \Cref{thm:main2} is given in \Cref{sec:generalprojection}.

By \Cref{thm:main2}, it remains to determine the rank index of $X$ when $\mathcal{C} \cap \mathbb{L}$ consists of three simple points. There are many example such that $X$ has property $\QR(3)$ and hence satisfies $\rankindex(X) = 3$. This leads us to the following conjecture.

\begin{conjecture}\label{con:2}
Let $\mathcal{C}$, $\mathbb{L}$, and $X = \mathcal{C} \cup \mathbb{L}$ be as in \Cref{thm:main2}.
Then $\rankindex(X) = 3$ if and only if $\mathcal{C} \cap \mathbb{L}$ consists of three distinct reduced points.
\end{conjecture}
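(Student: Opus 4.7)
The backward direction is immediate from \Cref{thm:main2}(2): if $\mathcal{C} \cap \mathbb{L}$ is not three simple points then it has at most two set-theoretic points, so $\rankindex(X) = 4 \neq 3$. The content of the conjecture is therefore the forward direction: assuming $\mathcal{C} \cap \mathbb{L} = \{q_1, q_2, q_3\}$ consists of three distinct points, show that $I(X)_2$ is spanned by quadrics of rank at most three, which suffices since $I(X) = (I(X)_2)$ by \Cref{thm:main2}(1).

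My plan is to first make the situation completely explicit and then construct the required rank-$3$ quadrics directly. Since $\rank_{\widetilde{\mathcal{C}}}(p) = 3$ and the three preimages are distinct, $p$ admits a decomposition $p = \lambda_1 \tilde{q}_1 + \lambda_2 \tilde{q}_2 + \lambda_3 \tilde{q}_3$ with $\tilde{q}_i \in \widetilde{\mathcal{C}}$ distinct. Applying an automorphism of the parameter $\PP^1$, I would normalize the three parameters of the $\tilde{q}_i$ to the standard triple $\{0, 1, \infty\}$, reducing the projection center to a one-parameter family in $(\lambda_1 : \lambda_2 : \lambda_3)$. In these coordinates, $\mathcal{C}$, $\mathbb{L}$, and the quadratic generators of $I(X)$ can all be written down explicitly; the apolarity/scroll description of \Cref{sec:gens} presents $I(X)_2$ as the $2$-minors of a catalecticant-style matrix attached to the apolar form of $p$, together with one additional quadric encoding $\mathcal{C}$ as a divisor on the (possibly singular) scroll $\mathcal{S} \supset \mathcal{C}$.

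The rank-$3$ quadrics to be constructed should come from the interaction between $\mathcal{S}$ and the three nodes $q_i$. Because each $q_i$ contributes independently to the singular structure of $X$, one expects three essentially independent rank-$3$ candidates, each roughly of the form $\ell \cdot m - n^2$ where $\ell, m, n$ are specific linear forms tied to a pair of the nodes and to a symmetric $2 \times 2$ minor in the catalecticant matrix. Finally I would verify, by a dimension count using $\dim I(X)_2 = \binom{r+2}{2} - (2r+3)$ (computed from the Hilbert polynomial of $X$, noting $p_a(X) = 2$), that the constructed family spans $I(X)_2$. The three-simple-points hypothesis is essential here: when two of the $q_i$ collide, the corresponding candidate degenerates and spanning breaks, matching the strict inequality in \Cref{thm:main2}(2).

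The main obstacle, and presumably the reason this remains a conjecture, is the final spanning step. In the coordinate-point setting of \Cref{thm:main}.\eqref{thm:mainitem2}, the analogous step is handled by an induction on $r$ exploiting the monomial symmetry preserved under hyperplane section, but for a general rank-$3$ center no such symmetry is available, and the count seems to require either a delicate case analysis in $(\lambda_1 : \lambda_2 : \lambda_3)$ or a genuinely new structural input. A natural alternative would be to induct on $r$ by projecting $X$ from one of its nodes $q_i$ to produce a configuration of the same combinatorial type in $\PP^{r-1}$, but controlling how the rank-$3$ quadrics in $I(X)$ behave under this projection is itself a nontrivial problem.
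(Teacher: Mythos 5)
The statement you are proving is \Cref{con:2}, which the paper itself leaves as a conjecture: there is no proof in the paper to compare against. What the paper does establish is exactly the direction you dispose of in your first sentence --- the implication ``$\rankindex(X)=3 \Rightarrow$ three simple points'' follows from \Cref{thm:main2}(2), since $\mbox{length}(\mathcal{C}\cap\mathbb{L})=3$ forces the complement of the three-simple-points case to be the ``at most two points'' case. Your reduction of the remaining content to spanning $I(X)_2$ by rank-$3$ quadrics is also correct (the paper notes $I(X)$ is generated by $I(\mathcal{C})_2$), and your dimension count $\dim I(X)_2=\binom{r+2}{2}-(2r+3)$ agrees with the count $\frac{1}{2}(d^2-3d-2)$ used in \Cref{prop:monomialrank3}. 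So everything you actually assert is right, and you are candid that the forward direction is not completed.

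For the open direction, the closest the paper comes is \Cref{prop:rank3simplepoints}, which proves property $\QR(3)$ only for the one specific family $(f_p)^\perp=(S^3-ST^2,\,T^{d-1})$ (three nodes at parameters $[0\!:\!1],[1\!:\!1],[1\!:\!-1]$), by induction on $d$ using the $Q_{A,B}$-map, with the base cases $d=6,7$ verified in \Cref{lem:simpleptsinitial} partly by machine computation; beyond that there are only Macaulay2 checks for random parameters at $d=5$. Your sketch is in the same spirit (normalize the three nodes, produce rank-$3$ quadrics of the form $\ell m-n^2$, then count), but two points would need repair before it could even match the paper's partial result. First, after sending the three node parameters to $\{0,1,\infty\}$ the stabilizer in $\mathrm{PGL}_2$ is finite, so the projection center still varies in a two-parameter family $[\lambda_1\!:\!\lambda_2\!:\!\lambda_3]$ (and the higher apolar generator $F_h$ contributes further moduli to the geometry of $X$), not a one-parameter family as you state. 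Second, ``three essentially independent rank-$3$ candidates tied to pairs of nodes'' is far short of the roughly $\binom{r+2}{2}-(2r+3)$ independent rank-$3$ forms required; the paper needs the full $Q_{A,B}$ machinery with many choices of sections $(l,m,n)$ subject to an elimination constraint, plus an induction multiplying the third argument by $S^2$, $T$, $T^2$, to get a spanning set even for its single special point. So the genuine gap is exactly where you locate it --- the spanning step --- but it is worth being aware that the gap is quantitatively larger than ``one candidate per node,'' and that no normalization reduces the problem to a one-dimensional family.
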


Obviously, \Cref{thm:main2} proves one direction of \Cref{con:2}. \\

\subsection*{Organization of the paper.}
\Cref{sec:prelim} outlines key definitions and foundational concepts including (smooth) curves of almost minimal degree, apolarity of binary forms, and the rank index of varieties.
\Cref{sec:gens} presents our primary results on the rank index for projected curves.
In \Cref{sec:main}, we focus on the quadratic rank index of projected curves obtained by linear projection from a standard basis in projective spaces.
\Cref{sec:generalprojection} includes explicit calculations for certain projective schemes not covered in the previous sections that provide additional support for conjectures concerning the rank index.

\bigskip

\section{Preliminaries}\label{sec:prelim}

\subsection{Apolarity of binary forms}\label{subsec:apolarity}
In this subsection we provide some basics related to the notion of apolarity of binary forms, which will play an essential role later. 
To do so, we fix some notation which will be kept throughout: 
Let $R:= \mathbb{K}[s,t]$ and $\mathcal{R}:= \mathbb{K}[S,T]$ be two bivariate standard graded polynomial
rings over the field $\mathbb{K}$, so that $\deg(s) = \deg(t) = \deg(S) = \deg(T) = 1$, $R=
\bigoplus_{i\in \mathbb{Z}} R_i, \mathcal{R}= \bigoplus_{i\in \mathbb{Z}} \mathcal{R}_i$ where $R_i := \bigoplus_{j = 0}^{i}\mathbb{K}s^jt^{i-j}$ and $\mathcal{R}_i := \bigoplus_{j = 0}^{i}\mathbb{K}S^jT^{i-j}$ are the $i$-th graded components of $R$ respectively of $\mathcal{R}$ for all $i\in \mathbb{Z}$. 
As usual, we call the polynomials $f\in R_i$ and $F \in \mathcal{R}_i$ binary forms of degree $i$ in $R$ respectively in $\mathcal{R}$.
Correspondingly, we respectively call the sets $R^{\mathrm{hom}} := \bigcup_{i\in \mathbb{Z}} R_i$
and $\mathcal{R}^{\mathrm{hom}} := \bigcup_{i\in \mathbb{Z}} \mathcal{R}_i$ of homogeneous polynomials in $R$ and in $\mathcal{R}$ the sets of
binary forms (in $R$ and in $\mathcal{R}$).

\begin{definition}
For all $i,j\in \mathbb{Z}$ we consider the bilinear map of $\mathbb{K}$-vector spaces $$\circ_{(j,i)}:\mathcal{R}_j\times R_i \to R_{i-j} \quad\text{and the induced map}\quad \circ: \mathcal{R}^{\mathrm{hom}} \times R^{\mathrm{hom}} \to R^{\mathrm{hom}}$$ 
where $\circ_{(j,i)}$ is defined by
$$
(F,f) \mapsto F\circ_{(j,i)} f = F\circ f := \sum_{k=0}^j c_k\frac{\partial^{j}}{\partial s^{j-k}\partial t^k}f
$$
where $F=\sum_{k=0}^jc_kS^{j-k}T^k$.
The \textit{apolar ideal} of a homogeneous polynomial $f \in R$ is defined to be the ideal generated by all homogeneous polynomials $F \in \mathcal{R}$ for which $F \circ f = 0$, and we denote it by $(f)^\perp$. 
In other words,
$$(f)^\perp := (F \in \mathcal{R} : F \circ f = 0).$$
\end{definition}

\begin{proposition} (cf. \cite[Theorem 1.44.]{MR1735271})
    Suppose $f \in \mathbb{K}[s,t]$ is a binary form of degree $d$.  
    Then, $(f)^\perp$ is a complete intersection ideal generated by two binary forms in $\mathcal{R}$.  
    More precisely: If $F_\ell, F_h$ are of minimal respectively maximal degree among all $F \in \mathcal{R}_{\mathrm{hom}} \setminus \{0\}$ with $F \circ f = 0$, then
    $$(f)^\perp = (F_l, F_h),$$  
    where $V(F_l, F_h) = \emptyset$ and $\deg F_l + \deg F_h = d + 2$ with $\deg F_l \le \deg F_h$. 
    If $\deg F_l < \deg F_h$, then $F_\ell$ is essentially uniquely determined by $f$, while $F_h$ is essentially unique modulo the principal ideal $\langle F_l \rangle$.
\end{proposition}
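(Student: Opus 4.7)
The plan is to reduce the statement to the classical theorem that height-$2$ graded Gorenstein ideals in a polynomial ring are complete intersections, applied to the Artinian algebra $A := \mathcal{R}/(f)^\perp$. First I would verify that $(f)^\perp$ is a homogeneous ideal (because $\circ$ sends $\mathcal{R}_j \otimes R_i$ into $R_{i-j}$) and that it contains $\mathcal{R}_{>d}$ for degree reasons, so $A$ is a graded Artinian $\mathbb{K}$-algebra concentrated in degrees $0$ through $d$. To see that $A$ is Gorenstein with socle in degree $d$, consider the bilinear pairing
\[
\mathcal{R}_i \times \mathcal{R}_{d-i} \longrightarrow \mathbb{K}, \qquad (F, G) \longmapsto (FG) \circ f.
\]
Because $\circ$ is an algebra action, $F$ lies in the left kernel iff $G \circ (F \circ f) = 0$ for every $G \in \mathcal{R}_{d-i}$, which is equivalent to $F \circ f = 0$ since the pairing $\mathcal{R}_{d-i} \times R_{d-i} \to \mathbb{K}$ is already perfect (diagonal Gram matrix on monomials in characteristic zero). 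Thus the pairing descends to a perfect pairing $A_i \times A_{d-i} \to \mathbb{K}$; in particular $A_d \cong \mathbb{K}$ is the socle, and $A$ is Artinian Gorenstein of socle degree $d$.

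Once Gorensteinness is in hand, the theorem of Serre (cf.\ Buchsbaum--Eisenbud) on codimension-$2$ Gorenstein ideals gives $(f)^\perp = (F_l, F_h)$ as a complete intersection in the regular ring $\mathcal{R} = \mathbb{K}[S,T]$. Set $e_l := \deg F_l \le \deg F_h =: e_h$. Since $\{F_l, F_h\}$ is a regular sequence in $\mathbb{K}[S,T]$ they share no common irreducible factor, so $V(F_l, F_h) = \emptyset$ in $\mathbb{P}^1$. The Koszul resolution of a graded complete intersection in two variables yields the Hilbert series $(1 - z^{e_l})(1 - z^{e_h})/(1 - z)^2$, whose top-degree term sits in degree $e_l + e_h - 2$; equating with the known socle degree $d$ gives $e_l + e_h = d + 2$.

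For the uniqueness statement, assume $e_l < e_h$ and read off the Hilbert function of $(f)^\perp$ from the same Koszul complex: in degree $e_l$ one obtains $\dim (f)^\perp_{e_l} = 1$, so $F_l$ is determined by $f$ up to a scalar; in degree $e_h$ one obtains $\dim (f)^\perp_{e_h} - \dim (F_l)_{e_h} = (e_h - e_l + 2) - (e_h - e_l + 1) = 1$, so $F_h$ is determined modulo $\langle F_l \rangle$ up to a scalar. The step I expect to be the main obstacle is the invocation of the Serre/Buchsbaum--Eisenbud structure theorem on codimension-$2$ Gorenstein ideals; a self-contained substitute would select a minimal-degree generator $F_l$, then an $F_h$ of minimal degree outside $(F_l)$ (which is automatically a regular sequence in $\mathbb{K}[S,T]$ by coprimality), and exploit the length of the Gorenstein quotient together with the self-dual Hilbert function to force the equality $(F_l, F_h) = (f)^\perp$.
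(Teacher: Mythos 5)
Your argument is correct. Note, however, that the paper does not prove this proposition at all: it is quoted with a citation to Iarrobino--Kanev (Theorem 1.44), so there is no in-paper proof to compare against. Your route --- showing via the perfect apolarity pairing $\mathcal{R}_i \times \mathcal{R}_{d-i} \to \mathbb{K}$, $(F,G)\mapsto (FG)\circ f$, that $A=\mathcal{R}/(f)^\perp$ is Artinian Gorenstein with socle degree $d$, then invoking the Serre/Buchsbaum--Eisenbud fact that height-$2$ graded Gorenstein ideals are complete intersections, and finally reading off $\deg F_l+\deg F_h=d+2$ and the uniqueness statements from the Koszul Hilbert series --- is essentially the standard proof found in that reference. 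All the individual steps check out: the descent of the pairing to $A_i\times A_{d-i}$ correctly uses that the degree-$(d-i)$ apolarity pairing is perfect in characteristic zero; the socle argument (an element of $A_i$ with $i<d$ killed by $\mathcal{R}_1$ pairs trivially with $A_{d-i}$, hence vanishes) is right; coprimality of a length-$2$ regular sequence of binary forms gives $V(F_l,F_h)=\emptyset$ over an algebraically closed field; and the Hilbert-function counts $\dim((f)^\perp)_{e_l}=1$ and $\dim((f)^\perp)_{e_h}-\dim(F_l)_{e_h}=1$ are computed correctly. The only mild caveat is that the heaviest input, the codimension-$2$ Gorenstein structure theorem, is imported rather than proved, but your sketched self-contained substitute (choose $F_l$ of minimal degree, then $F_h$ of minimal degree outside $(F_l)$, and compare lengths using the self-dual Hilbert function) is a viable way to make the binary case elementary if desired.
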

For example, if $f = \alpha s + \beta t \in \mathbb{K}[s,t]$ is a linear binary form, then the lower-degree generator of $(f)^\perp$ is $F_l = \beta S - \alpha T \in \mathcal{R}$ and $F_h$ is any quadratic form in $ \mathcal{R}$. 

The apolarity of forms is a useful tool for studying the decomposition of forms into sums of powers of linear forms. 
This decomposition is known as the Waring decomposition or the (generalized) additive decomposition of forms. 
For more details on the generalized additive decomposition and apolarity, see \cite{MR1735271}.

We remark that for any integer $d \geq 0$, the pairing $\circ : \mathcal{R}_d \times R_d \to R_0 = \mathbb{K}$ is perfect and hence forms a non-degenerate inner product of the two vector spaces $\mathcal{R}_d$ and $R_d$, meaning that for any basis $v_0,v_1,\ldots,v_d$ of $\mathcal{R}_d$ and any basis $w_0,w_1,\ldots,w_d$ of $R_d$ the matrix $v_{\bullet} \circ w_{\bullet} := (v_i \circ w_j \mid 0 \leq i,j \leq d) \in \mathbb{K}^{(d+1)\times(d+1)}$ is of rank $d+1$.
Therefore, we can identify $\mathcal{R}_d$ with the dual vector space of $R_d$ via the differential operator.
In particular, for $f \in R_d$ and $g \in \mathcal{R}_d$, the inner product $g \circ f$ is given by
$$
g \circ f = \sum_{i = 0}^{d} c_i \left(\frac{\partial}{\partial s}\right)^{d-i} \left(\frac{\partial}{\partial t}\right)^{i} f(s,t),
$$
where $g = g(S,T) = \sum_{i = 0}^{d} c_i S^{d-i} T^{i}$.

Let $\{S^d,S^{d-1}T,\cdots,T^d\}$ be the monomial basis of $\mathcal{R}_d$. Consider the dual basis of this monomial basis for $R_d$ with above inner product, which is given by $\{s^{[d]},\cdots,s^{[d-i]}t^{[i]},\cdots,t^{[d]}\}:=\{\frac{1}{d!}s^d,\cdots,\frac{1}{(d-i)!i!}s^{d-i}t^i,\cdots,\frac{1}{d!}t^d\}$. 
Indeed, 
\begin{equation} S^{d-i}T^i\circ s^{[d-j]}t^{[j]}=\delta_{ij} \end{equation}\label{eqn:duality}
where $\delta_{ij}$ is the Kronecker delta.
Therefore, any form $f(s,t)\in \mathbb{K}[s,t]_d$ can be expressed as 
\begin{equation}
f(s,t)=\sum_{i=0}^d \left(S^{d-i}T^i\circ f(s,t)\right) s^{[d-i]}t^{[i]}.
\end{equation}
We have the following identity: 
\begin{equation}\label{eqn:poweroflinearforms}
(\alpha s+\beta t)^d=\sum_{i=0}^d \binom{d}{i} \alpha^{d-i}\beta^{i}s^{d-i}t^i=d!\sum_{i=0}^d\alpha^{d-i}\beta^i s^{[d-i]}t^{[i]}
\end{equation}

\begin{lemma}\label{lem:evaluation}
    Let $d$ be a positive integer. 
    For binary forms $F(S,T)\in \mathbb{K}[S,T]_d$ and $(\alpha s+\beta t)^d\in \mathbb{K}[s,t]_d$,
    $$F(S,T)\circ (\alpha s+\beta t)^d = d!F(\alpha,\beta)$$
\end{lemma}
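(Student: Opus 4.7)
The plan is to verify the identity by reducing to the monomial case and then appealing to the duality relation already established in the excerpt. Since the apolarity pairing $F \circ g$ is $\mathbb{K}$-linear in $F$, and since evaluation $F \mapsto F(\alpha,\beta)$ is also linear, it suffices to check the claim on a basis of $\mathcal{R}_d$. I would therefore take the monomial basis $\{S^{d-i}T^i : 0 \le i \le d\}$ and verify the formula for $F = S^{d-i}T^i$ for each $i$.

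For the monomial case, I would start from the expansion of $(\alpha s + \beta t)^d$ in the divided-power basis of $R_d$ provided by identity \eqref{eqn:poweroflinearforms}, namely
\[
(\alpha s + \beta t)^d = d! \sum_{j=0}^{d} \alpha^{d-j}\beta^{j}\, s^{[d-j]}t^{[j]}.
\]
Applying the monomial operator $S^{d-i}T^i$ and using the duality relation $S^{d-i}T^i \circ s^{[d-j]}t^{[j]} = \delta_{ij}$ from equation \eqref{eqn:duality}, I would extract
\[
S^{d-i}T^i \circ (\alpha s + \beta t)^d = d!\, \alpha^{d-i}\beta^{i} = d!\cdot (S^{d-i}T^i)(\alpha,\beta),
\]
which is exactly the desired identity for the monomial $F = S^{d-i}T^i$.

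To conclude, I would write an arbitrary $F \in \mathcal{R}_d$ as $F(S,T) = \sum_{i=0}^{d} c_i S^{d-i}T^i$ and invoke linearity on both sides:
\[
F \circ (\alpha s+\beta t)^d = \sum_{i=0}^{d} c_i\, S^{d-i}T^i \circ (\alpha s+\beta t)^d = d!\sum_{i=0}^{d} c_i\, \alpha^{d-i}\beta^i = d!\, F(\alpha,\beta).
\]
There is no real obstacle here; the argument is essentially a bookkeeping exercise, and the only minor subtlety is keeping straight the factorial factor, which is controlled precisely by the use of the divided-power basis $\{s^{[d-j]}t^{[j]}\}$ rather than the monomial basis $\{s^{d-j}t^{j}\}$. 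One could alternatively give a direct proof by repeated differentiation using the chain rule $\frac{\partial}{\partial s}(\alpha s + \beta t)^d = d\alpha (\alpha s + \beta t)^{d-1}$ and similarly for $t$, yielding $F(S,T)\circ (\alpha s+\beta t)^d = d(d-1)\cdots 1 \cdot F(\alpha,\beta) = d! F(\alpha,\beta)$, but the basis-expansion argument is cleaner and reuses identities already displayed in the text.
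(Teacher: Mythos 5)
Your proposal is correct and follows essentially the same route as the paper: both expand $(\alpha s+\beta t)^d$ in the divided-power basis via \eqref{eqn:poweroflinearforms} and apply the duality relation \eqref{eqn:duality}; the paper just carries out the double sum over $i,j$ in one step rather than isolating the monomial case first. No gaps.
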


\begin{proof}
    Let $F(S,T)= \sum_{j = 0}^{d} c_j S^{d-j} T^j \in \mathbb{K}[S,T]_d$.
    By \eqref{eqn:duality} and \eqref{eqn:poweroflinearforms},
    \begin{align*}
        F(S,T)\circ (\alpha s+\beta t)^d&=d!\sum_{i=0}^d\sum_{j=0}^d \alpha^{d-i}\beta^i c_j\left(S^{d-i}T^i\circ s^{[d-j]}t^{[j]}\right)\\
        &=d!\sum_{i=0}^dc_i\alpha^{d-i}\beta^i=d!F(\alpha,\beta)
    \end{align*}
\end{proof}
Thus, the inner product between $F(S,T)$ and $(\alpha s+ \beta t)^d$ can be interpreted as the evaluation of $F(S,T)$ at $(\alpha, \beta)$.

\subsection{Projected curves}\label{subsec:correspondences}
We now discuss a correspondence between binary forms of degree $d$ and points in a projective space $\mathbb{P}^d$.
Let $\nu_d : \mathbb{P}^1 \to \mathbb{P}^d$ be the $d$-uple embedding (or the $d$th Veronese map). 
Let $\mathcal{C}_d := \nu_d(\mathbb{P}^1) \subseteq \mathbb{P}^d$ denote the image of the $d$th Veronese map, which is known as the standard rational normal curve.

Given a point $p \in \mathbb{P}^d$, consider the vector space of linear forms on $\mathbb{P}^d$ that vanish at $p$.
Pulling this space back to $\mathbb{P}^1$ via $\nu_d$ yields a $d$-dimensional vector space of degree $d$ binary forms, which corresponds to a hyperplane in the space of binary forms of degree $d$. 
We denote by $f_p$ the binary form of degree $d$ (unique up to scalar multiple) that is orthogonal to this hyperplane with respect to the inner product.
For $p=[p_0:p_1:\ldots:p_d]$, we can write $f_p=\sum_{i=0}^d p_i\binom{d}{i}s^{d-i}t^i$, unique up to scalar multiple.
In particular, one can see that given a binary form $f_p$ in the vector space $\mathbb{K}[s,t]_d$, the orthogonal complement of $f_p$ with respect to the inner product defined by the differential operator is $((f_p)^\perp)_d$.

An alternative way to associate a point $p$ with the binary form $f_p$ is as follows. 
Under the $d$-uple embedding $\nu_d$, a point $\nu_d([\alpha : \beta])$ on the rational normal curve corresponds to the $d$th power $(\alpha s + \beta t)^d \in \mathbb{K}[s,t]_d$ (using the identification between points of $\mathbb{P}^1$ and binary linear forms). 
Since distinct points on the rational normal curve are in linearly general position, this extends additively to give a correspondence between all points in $\mathbb{P}^d$ and binary forms of degree $d$.

Explicitly, for $p \in \mathbb{P}^d$, we may express $p$ as a linear combination of $r \leq d+1$ points $p_i$ on $\mathcal{C}_d$, say $p = \sum_{i=1}^r c_i p_i$. 
Setting $p_i = \nu_d([\alpha_i : \beta_i])$, we have
$$f_p = \sum_{i=1}^r c_i (\alpha_i s + \beta_i t)^d \in \mathbb{K}[s,t]_d.$$
In this correspondence, the torus-fixed point $e_i := [\delta_{i0} : \delta_{i1} : \cdots : \delta_{id}] \in \mathbb{P}^d$ corresponds to the monomial $s^{[d-i]} t^{[i]} \in \mathbb{K}[s,t]_d$ for $i = 0,1,\dots,d$.

\begin{example}
Let $\mathcal{C}_6$ be the standard rational normal curve of degree $6$ in $\mathbb{P}^6$, and let $p = e_2+e_4$ be a point where $e_2$ and $e_4$ are the second and fourth torus-fixed point in $\mathbb{P}^6$, respectively. 
Then $$(f_p)^\perp = (S^3T-ST^3, S^4 - S^2 T^2 + T^4) \subset \mathbb{K}[S,T].$$ 
Note that $(f_p)^\perp$ determines a unique degree $6$ form $f_p$, up to multiplication by a nonzero scalar in $\mathbb{K}$.
$$
f_p =s^{[4]}t^{[2]}+s^{[2]}t^{[4]}=\frac{1}{4!2!}s^4t^2+\frac{1}{2!4!}s^2t^4=\frac{1}{6!}\cdot\left( -t^6 - s^6 + \frac{1}{2}(s+t)^6 + \frac{1}{2}(s-t)^6\right).
$$
The powers of linear forms $\{ t^6, s^6, (s+t)^6, (s-t)^6 \}$ correspond to the four points $\{ \nu_6([0:1]), \nu_6([1:0]), \nu_6([1:1]), \nu_6([1:-1]) \}$ on the standard rational normal curve.
\end{example}

Let $\pi_p: \mathbb{P}^d \dashrightarrow \mathbb{P}^{d-1}$ be the linear projection map away from a point $p \in \mathbb{P}^d$. 
The projected curve $\mathcal{C} = \pi_p(\mathcal{C}_d)$ can be classified in terms of the $\mathcal{C}_d$-rank of the point $p$.
For example,
\begin{itemize}
    \item If $p$ lies on $\mathcal{C}_d$, then $\mathcal{C}$ is a curve of minimal degree.
    \item If $p$ lies in $\mathcal{C}_d^2 \setminus \mathcal{C}_d$, then $\mathcal{C}$ is an arithmetically Cohen-Macaulay(aCM) curve of almost minimal degree.
    \item If $p$ lies in $\mathbb{P}^d \setminus \mathcal{C}_d^2$, then $\mathcal{C}$ is a smooth non-aCM curve of almost minimal degree.
\end{itemize}
In fact, the graded Betti numbers of $\mathcal{C}$ are determined by the $\mathcal{C}_d$-rank of $p$ for $p \in \mathbb{P}^d \setminus \mathcal{C}_d^2$, see \cite{MR2299577}.

Through the identification between points in $\mathbb{P}^d$ and binary forms of degree $d$, we can regard the projection map $\pi_p$ as a $\mathbb{K}$-linear map $\pi_p: \mathbb{K}[s,t]_d \rightarrow ((f_p)^\perp)_d$ on vector spaces.
Namely, for $f_p \in \mathbb{K}[s,t]_d$ associated with the point $p \in \mathbb{P}^d$, $((f_p)^\perp)_d$ forms a hyperplane orthogonal to $f_p$ with respect to the inner product $\circ$.
Indeed, given $p$, let $\{p_1, p_2, \dots, p_d\}$ be a basis for $p^\perp = \{p' \in \mathbb{P}^d : p \circ p' = 0\}$ for any inner product $\circ$ on the space $\mathbb{P}^d$.
Since the linear projection $\pi_p: \mathbb{P}^d \dashrightarrow \mathbb{P}^{d-1}$ is the map sending $q = \alpha_0 p + \sum_{i=1}^d \alpha_i p_i$ to $\sum_{i=1}^d \alpha_i p_i$, the linear projection $\pi_p: \mathbb{K}[s,t]_d \rightarrow \mathbb{K}[s,t]_d$ sends the binary form $f_q = \alpha_0 f_p + \sum_{i=1}^d \alpha_i f_i \in \mathbb{K}[s,t]_d$ to $\sum_{i=1}^d \alpha_i f_i$, where $\{f_1, f_2, \dots, f_d\}$ is the basis for $((f_p)^\perp)_d$.

Furthermore, by associating a point $p \in \mathbb{P}^d$ with the binary form $f_p \in \mathbb{K}[s,t]_d$ and applying apolarity to this form, we can obtain deeper insights into the projected curve $\mathcal{C}$. 
In detail, for a point $p\in \mathbb{P}^d$, the apolar ideal is given by $(f_p)^\perp = (g_1, g_2) \subset \mathbb{K}[S,T]$ with $\deg g_1 \le \deg g_2$. 
The $\mathcal{C}_d$-rank of the point $p$ equals $\deg(g_1)$ which also specifies the type of rational normal surface containing the curve $\pi_p(\mathcal{C}_d)$.

\subsection{Quadratic rank index of projective schemes}\label{subsec:rankindex}
For a nondegenerate projective scheme $X \subset \PP^r$ defined by quadratic polynomials, we denote by $I(X)$ the homogeneous vanishing ideal of $X$ in the standard graded polynomial algebra $R = \mathbb{K}[x_0, x_1, \ldots, x_d]$ and we let $\{Q_0, Q_1, \ldots, Q_{t < \binom{d+2}{2}}\}$ be a basis of the $\mathbb{K}$-vector space $I(X)_2$.
We say $X$ satisfies property $\QR(k)$ for an integer $k$, if $I(X)$ is generated by a collection $\{Q_0, Q_1, \ldots, Q_t\}$ of quadratic forms of rank at most $k$. 
We define the \textit{quadratic rank-index} of a variety $X$ (defined by quadrics) as the minimal number $k$ such that $X$ satisfies property $\QR(k)$.
In this section we discuss how to compute the rank index of $X$ given the basis $\{Q_0, Q_1, \ldots , Q_t\}$.

For each field $\mathbb{F}$ (of characteristic $\neq 2$) and each quadratic polynomial $F \in \mathbb{F}[z_0, z_1, \ldots, z_r]_2$ let $M_F \in F^{(r+1)\times(r+1)}$ denote the symmetric matrix associated to $F$, so that $(z_0, z_1, \ldots, z_r) M_F (z_0, z_1, \ldots, z_r)^T = F$. 
Each member of $I(X)_2$ is of the form
$$
Q(\underline{a}) := a_0Q_0 + a_1Q_1 + \cdots + a_t Q_t \in I(X)_2 \text{ with } \underline{a} = (a_0,a_1,\ldots,a_t)\in \mathbb{K}^{t+1}
$$
and satisfies $M_{Q(\underline{a})} = \sum_{i=0}^t a_i M_{Q_i} \in K^{(r+1)\times(r+1)}$. 
Now, let $A_0, A_1, \ldots, A_t$ be indeterminates, set $\mathbb{F} := \mathbb{K}(A_0, A_1, \ldots, A_t)$ and consider the polynomial
$$
Q := A_0Q_0 + A_1Q_1 + \cdots + A_t Q_t \in \sum_{i=0}^t A_i I(X)_2 \subseteq \mathbb{F}[z_0, z_1, \ldots, z_r]_2
$$
which satisfies $M_Q = \sum_{i=0}^t A_i M_{Q_i}$.
In particular, we have 
$$
M_Q \in \mathbb{K}[A_0,A_1,\ldots,A_t]^{(r+1)\times(r+1)}.
$$
Moreover, for $1 \leq k \leq r$, the form $Q(\underline{a})$ (and hence the matrix $M_{Q(\underline{a})}$) is of rank
$\leq r$ if and only if $\underline{a} \in \mathbb{K}^{t+1}$ belongs to the zero-set of the ideal $I(k+1,M_Q) \subseteq
\mathbb{K}[A_0,A_1,\ldots,A_t]$ generated by all $(k+1)\times(k+1)$ minors of the matrix $M_Q$.
Now, we identify $\mathbb{P}(I(X)_2)$ with $\mathbb{P}^t$ by sending $[Q(\underline{a})] = \mathbb{K}^*Q(\underline{a})$ to $[a_0:a_1:\cdots:a_t]$, and for $1 \leq k \leq r$ we define the locus
$$
\Phi_k(X) := \{[a_0:a_1:\cdots:a_t] \mid \operatorname{rank}(Q(a)) \leq k\} \subseteq \mathbb{P}(I(X)_2)=\mathbb{P}^t
$$
corresponding to all quadratic polynomials of rank at most $k$ in $I(X)$. 
By the above observation $\Phi_k(X)$ is a projective algebraic set in $\mathbb{P}^t$. 
Now, we get a descending filtration of $\mathbb{P}^t$ associated to $X$:
\begin{equation*}
\emptyset = \Phi_1 (X) = \Phi_2 (X) \subset \Phi_3 (X) \subset \Phi_4 (X) \subset \cdots \subset \Phi_r (X) \subset \PP^t.
\end{equation*}
Note that any change of coordinates $T : \PP^r \rightarrow \PP^r$ induces a projective equivalence between $\Phi_k (X)$ and $\Phi_k (T(X))$.

\begin{proposition}{\cite[Proposition 2.1.]{MR4612424}}\label{prop:computation of the rank index}
Keep the previous notations. 
Then
$$\rankindex (X) = \min \{~s \mid \sqrt{I(s+1,M_Q)} \quad \mbox{contains no non-zero linear form} \}.$$
\end{proposition}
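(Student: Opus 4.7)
The plan is to show, step by step, that property $\QR(s)$ is equivalent to the linear nondegeneracy of $\Phi_s(X) \subset \PP^t$, and then use the Nullstellensatz to translate this into the radical-ideal condition in the statement.

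The first step I would record is the reduction from ``generates as an ideal'' to ``spans as a vector space.'' Since $I(X)$ is generated by its degree-$2$ part, a subset $\Sigma \subset I(X)_2$ generates $I(X)$ as an ideal if and only if the $\mathbb{K}$-linear span of $\Sigma$ equals $I(X)_2$: one direction is immediate from $R \cdot I(X)_2 = I(X)$, while the other follows by noting that the degree-$2$ piece of $R \cdot \Sigma$ is exactly $\Span_{\mathbb{K}}(\Sigma)$, which must therefore equal $I(X)_2$. Hence $X$ satisfies $\QR(s)$ iff the rank-$\leq s$ quadrics in $I(X)_2$ linearly span $I(X)_2$ over $\mathbb{K}$.

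Next I would rephrase this geometrically in $\PP^t = \PP(I(X)_2)$. The rank-$\leq s$ quadrics form the algebraic set $\Phi_s(X) = V(I(s+1, M_Q)) \subset \PP^t$, and a spanning set of $I(X)_2$ by rank-$\leq s$ quadrics exists if and only if $\Phi_s(X)$ is linearly nondegenerate in $\PP^t$, which in turn is equivalent to the nonexistence of a nonzero linear form in $\mathbb{K}[x_0,\ldots,x_t]$ vanishing on every point of $\Phi_s(X)$. Because $\mathbb{K}$ is algebraically closed, the Nullstellensatz identifies the vanishing ideal of $\Phi_s(X)$ with $\sqrt{I(s+1, M_Q)}$, so the nonzero linear forms in that vanishing ideal are precisely the nonzero linear forms in $\sqrt{I(s+1, M_Q)}$. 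Taking the minimum $s$ for which no such linear form exists yields the claimed formula.

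There is no deep obstacle here; the statement is essentially a dictionary between vector-space spans of quadrics and radical vanishing ideals of determinantal loci. The only subtlety worth flagging is that the first step relies on the standing hypothesis that $I(X)$ is generated in degree $2$, without which the equivalence between ideal generation and vector-space spanning in $I(X)_2$ would fail and the right-hand side of the formula would no longer compute $\rankindex(X)$.
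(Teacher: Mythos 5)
Your argument is correct. The paper does not actually prove this proposition — it is imported verbatim from \cite[Proposition 2.1]{MR4612424} — and your chain of equivalences (ideal generation in degree $2$ $\Leftrightarrow$ linear spanning of $I(X)_2$ $\Leftrightarrow$ linear nondegeneracy of $\Phi_s(X)$ in $\PP^t$ $\Leftrightarrow$ absence of nonzero linear forms in $\sqrt{I(s+1,M_Q)}$ via the Nullstellensatz over the algebraically closed field $\mathbb{K}$) is precisely the standard dictionary on which the cited result rests, with the correct observation that everything hinges on the standing hypothesis that $I(X)$ is generated by quadrics.
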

\begin{definition and remark}\label{delta}
For each $k\geq 3$, we define $\delta(X,k)$ as the maximum number of linearly independent quadrics of rank $\leq k$ in $I(X)_2$. 
In particular, $\delta(X,k)$ is one more than the dimension of the linear subspace of $\mathbb{P}^t$ spanned by $\Phi_k(X)$.
Therefore, it holds that
{\small
\begin{align*}
    &\delta(X,k)\\
    &=(t+1)-\left(\text{the maximal}~\sharp~ \text{of linearly independent linear forms in}~\sqrt{I(k+1,M_Q)}\right).
    \end{align*}}
    This invariant is introduced in \cite[Definition 6.2]{MR4816776} and discussed in the proof of \cite[Proposition 6.3]{MR4816776}.
\end{definition and remark}

\subsection{Rank $3$ generators}\label{subsec:qmap}
We review the $Q$-map on a given decomposition of the vector bundle over varieties, which produces rank $3$ quadratic generators, introduced in \cite{MR4298641}.
Let $X$ be as above, let $L = \mathcal{O}_X (1)$ and suppose that $X \subset \PP^r$ is the linearly normal embedding of $X$ by $L$. 
Thus there is a natural isomorphism
$$\varphi : H^0 (X,L) \rightarrow R_1$$
of $\mathbb{K}$-vector spaces. 
Now, assume that $L$ is decomposed as $L=A ^{2} \otimes B$ for some line bundles $A$ and $B$ on $X$ such that $h^0 (X,A) \geq 2$ and $h^0 (X,B) \geq 1$. 
Define the map
\begin{equation*}
Q_{A,B} : H^0 (X,A) \times H^0 (X,A) \times H^0 (X,B) \rightarrow I(X)_2
\end{equation*}
by
\begin{equation*}
Q_{A,B} (s,t,h) = \varphi(s \otimes s \otimes h) \varphi(t \otimes t \otimes h) - \varphi(s \otimes t \otimes h )^2 .
\end{equation*}
This map is well-defined and any rank $3$ quadratic generator of $I(X)$ lies in the image of the $Q_{A,B}$-map.

\section{Generators of projected curves}\label{sec:gens}
Let $\mathcal{C} = \pi_p(\mathcal{C}_d)$ for $p \in \mathbb{P}^d \setminus \mathcal{C}_d$ be the projected curve of degree $d$ in $\mathbb{P}^{d-1}$. 
In this section, we demonstrate the existence of a parameterized curve that is projectively equivalent to $\mathcal{C}$, and we show that $\mathcal{C}$ satisfies property $\QR(4)$. 
We note that, since the projected curve is not linearly normal, determining whether this curve satisfies
property $\QR(k)$ is not straightforward.

\begin{theorem}\label{thm:parametrization}
    Let $p \in \mathbb{P}^d \setminus \mathcal{C}_d$ and let $f_p \in \mathbb{K}[s,t]$ be the binary form corresponding to $p$.
    Let $g_1, g_2 \in \mathbb{K}[S,T]$ be binary forms that generate the ideal $(f_p)^\perp$, and let $d_i = \deg g_i$ for $i=1,2$ with $2 \leq d_1 \leq d_2$ and $d_1+d_2 = d+2$. 
    Then, the projected curve $\mathcal{C} = \pi_p(\mathcal{C}_d) \subset \mathbb{P}^{d-1}$ is projectively equivalent to a curve with the following parametrization:
    \begin{equation*}
        \{[g_2(\alpha,\beta) \alpha^{d_1-2} : \dots : g_2(\alpha,\beta) \beta^{d_1-2} : g_1(\alpha,\beta) \alpha^{d_2-2} : \dots : g_1(\alpha,\beta) \beta^{d_2-2}] \mid [\alpha : \beta] \in \mathbb{P}^1\}.
    \end{equation*}
\end{theorem}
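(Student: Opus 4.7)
The plan is to realize the linear projection $\pi_p$ concretely as the restriction map from $\mathcal{R}_d = R_d^*$ to the hyperplane $((f_p)^\perp)_d$, choose an explicit basis of that hyperplane coming from the generators $g_1, g_2$ of the apolar ideal, and then evaluate this basis on the point $(\alpha s + \beta t)^d \in R_d$ that represents $\nu_d([\alpha:\beta])$. The formula in \Cref{lem:evaluation} then produces exactly the stated parametrization.

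More precisely, the first step is to recall from \Cref{subsec:correspondences} that a hyperplane in $\mathbb{P}(R_d) = \mathbb{P}^d$ passing through $p$ corresponds to an element of $((f_p)^\perp)_d \subset \mathcal{R}_d$, and that $\pi_p$ is induced by restricting the evaluation pairing $R_d \times \mathcal{R}_d \to \mathbb{K}$ to this hyperplane. Consequently, once a basis $\{F_1, \ldots, F_d\}$ of $((f_p)^\perp)_d$ is fixed, the projection $\pi_p(\nu_d([\alpha:\beta]))$ in the resulting affine chart on $\mathbb{P}^{d-1}$ is
\[
[\,F_1 \circ (\alpha s + \beta t)^d : \cdots : F_d \circ (\alpha s + \beta t)^d\,],
\]
and any two such choices of basis produce projectively equivalent images. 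This reduces the theorem to constructing a good basis.

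Next I would construct the basis
\[
\mathcal{B} = \bigl\{\,S^{a}T^{b}\,g_1(S,T)\,\bigr\}_{a+b\,=\,d-d_1} \;\cup\; \bigl\{\,S^{a}T^{b}\,g_2(S,T)\,\bigr\}_{a+b\,=\,d-d_2}
\]
of $((f_p)^\perp)_d$. It spans because $(f_p)^\perp = (g_1,g_2)$, and it is linearly independent: since $g_1,g_2$ is a regular sequence in $\mathcal{R}$, the only syzygy between them has total degree $d_1+d_2 = d+2 > d$, so no nontrivial relation can occur in degree $d$. A dimension count confirms that $\#\mathcal{B} = (d-d_1+1) + (d-d_2+1) = d$, matching the expected dimension of the hyperplane.

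Finally, I would evaluate each basis element on $(\alpha s + \beta t)^d$ using \Cref{lem:evaluation}: for any $F \in \mathcal{R}_d$,
\[
F \circ (\alpha s + \beta t)^d \;=\; d!\,F(\alpha,\beta).
\]
Applied to $F = S^{a}T^{b}g_i(S,T)$, this yields $d!\,\alpha^{a}\beta^{b}\,g_i(\alpha,\beta)$. The common scalar $d!$ can be discarded in projective coordinates, and using $d-d_1 = d_2-2$ and $d-d_2 = d_1-2$ the resulting vector of homogeneous coordinates becomes (after ordering the blocks as in the statement) exactly
\[
\bigl[\,g_2(\alpha,\beta)\alpha^{d_1-2}:\cdots:g_2(\alpha,\beta)\beta^{d_1-2}:g_1(\alpha,\beta)\alpha^{d_2-2}:\cdots:g_1(\alpha,\beta)\beta^{d_2-2}\,\bigr].
\]
There is no genuine obstacle in this argument; the one subtlety worth checking carefully is the independence of $\mathcal{B}$, which follows from the complete-intersection structure of $(f_p)^\perp$ guaranteed by the proposition quoted from \cite{MR1735271}, together with the inequality $d_1+d_2>d$. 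Everything else is a direct computation via the duality \eqref{eqn:duality} and \eqref{eqn:poweroflinearforms}.
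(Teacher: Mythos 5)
Your proposal is correct and follows essentially the same route as the paper: both realize $\pi_p$ via the pairing with the basis $\{S^aT^bg_1\}\cup\{S^aT^bg_2\}$ of $((f_p)^\perp)_d$ and then apply \Cref{lem:evaluation} to $(\alpha s+\beta t)^d$. The only addition is your explicit verification that this set is linearly independent (via the Koszul syzygy in degree $d_1+d_2=d+2>d$), a point the paper leaves implicit.
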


\begin{proof}
    Observe that the vector space $V := \{ g \in \mathbb{K}[S,T]_d \mid g \circ f_p = 0\} \subseteq \mathbb{K}[S,T]_d$ is given by
    \begin{align*}
        V & = g_1 \mathbb{K}[S,T]_{d-d_1} + g_2 \mathbb{K}[S,T]_{d-d_2} \\
        & = \langle g_1 S^{d-d_1}, \dots, g_1 T^{d-d_1}, g_2 S^{d-d_2}, \dots, g_2 T^{d-d_2} \rangle.
    \end{align*}

    Keep in mind that the $d$ vectors
    $$
    v_1 := g_2 S^{d-d_2}, \; v_2 := g_2 S^{d-d_2-1}T, \; \ldots, \; v_{d_1-1} := g_2 T^{d-d_2},
    $$
    $$
    v_{d_1} := g_1 S^{d-d_1}, \; \ldots, \; v_{d-1} := g_1 S T^{d-d_1-1}, \; v_d := g_1 T^{d-d_1} \in \mathbb{K}[S,T]_d
    $$
    form a basis of the previously introduced subspace $V \subseteq \mathbb{K}[S,T]_d$. 
    We consider $\mathbb{K}[s,t]$ as dual to $\mathbb{K}[S,T]_d$ by means of the non-degenerate inner product $\circ$ and choose $w_1, w_2, \ldots, w_d$ as a system dual to $v_1, v_2, \ldots, v_d$, so that $v_i \circ w_j = \delta_{ij}$ for $i,j=1,2,\ldots,d$. 
    Observe that $v_i \circ f_p = 0$ for $i=1,2,\ldots,d$ implies that $f_p \notin
    W := \sum_{j=1}^d \mathbb{K} w_j$, so that $\mathbb{K} f_p + W = \mathbb{K}[s,t]_d$. 
    Hence there is a $v_0 \in \mathbb{K}[S,T]_d$ such that $\{v_0, v_1, \ldots, v_d\}$ is a basis dual to $\{f_p, w_1, \ldots, w_d\}$.

    Since $\{f_p, w_1, w_2, \ldots, w_d\}$ is a basis of $\mathbb{K}[s,t]_d$, for any $q \in \mathbb{P}^d$, we have a presentation
    $$
    f_q = \alpha_0 f_p + \sum_{i = 1}^{d} (v_i\circ f_q)w_i = \alpha_0 f_p + (g_2 S^{d-d_2} \circ f_q) w_1 + \cdots + (g_1 T^{d-d_1} \circ f_q) w_d.
    $$

    If we set $\mathbb{P}^{d-1} := \mathbb{P}(W) = \mathbb{P}(\mathbb{K} w_1 + \cdots + \mathbb{K} w_d)$, the projection map $\pi_p : \mathbb{P}^d \setminus \{p\} \to \mathbb{P}^{d-1}$ sends
    \begin{align*}
    q & = [f_q] = [\alpha_0 f_p + (g_2 S^{d-d_2} \circ f_q) w_1 + \cdots + (g_1 T^{d-d_1} \circ f_q) w_d] \\ 
    & = [\alpha_0: g_2S^{d-d_2}\circ f_q: \cdots:g_1 T^{d-d_1}\circ f_q] \in \mathbb{P}^d\setminus\{p\} 
    \end{align*}
    to 
    $$
    [g_2 S^{d-d_2} \circ f_q : \cdots : g_1 T^{d-d_1} \circ f_q].
    $$

    Now, let $q = [\alpha^{d}: \alpha^{d-1}\beta : \cdots : \alpha \beta^{d-1} : \beta^d] \in \mathcal{C}_d$ with               $[\alpha:\beta] \in \mathbb{P}^1$. 
    Then we have $f_q = (\alpha s + \beta t)^d$ and \Cref{lem:evaluation} implies that
    $$
    g_2 S^{d-d_2} \circ f_q = d!\, g_2(\alpha,\beta) \alpha^{d-d_2}, \; \ldots \;,  g_1 T^{d-d_1} \circ f_q = d!\, g_1(\alpha,\beta) \beta^{d-d_1}.
    $$
    This proves that the curve $\mathcal{C} = \pi_p(\mathcal{C}_d)$ has the requested parametrization.
\end{proof}

By \Cref{thm:parametrization}, we can immediately see that $\mathcal{C} = \pi_p(\mathcal{C}_d)$ is contained in a rational surface scroll $\mathcal{S} = S(d-d_2,d-d_1)=S(d_1-2, d_2-2) \subset \mathbb{P}^{d-1}$ as a divisor. 
This provides a constructive proof of the existence of a surface scroll containing the projected curve $\mathcal{C}$. 
Moreover we can prove that the ideal of this curve can be generated by quadratic forms of rank at most four.

\begin{corollary}\label{cor:QR(4)}
    For $d\ge 4$, any projected curve $\pi_p(\mathcal{C}_d)\subset \mathbb{P}^{d-1}$ for $p\in \mathbb{P}^{d}$ satisfies property $\QR(4)$.
\end{corollary}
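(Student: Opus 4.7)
The plan is to apply \Cref{thm:parametrization} to realize $\mathcal{C}$ (after projective change of coordinates) as a divisor on the rational normal scroll $S = S(d_1-2, d_2-2)\subset \mathbb{P}^{d-1}$, where $g_1, g_2$ generate the apolar ideal $(f_p)^{\perp}$ with $\deg g_i = d_i$ and $d_1\leq d_2$. In the coordinates $x_i = g_2\alpha^{d_1-2-i}\beta^i$ and $y_j = g_1\alpha^{d_2-2-j}\beta^j$ supplied by \Cref{thm:parametrization}, the ideal $I(S)$ is generated by the $2\times 2$ minors of the $2\times(d-2)$ catalecticant matrix in the $x_\bullet, y_\bullet$, and each such minor is a rank-$4$ quadric. (The degenerate case $d_1=1$ reduces to the rational normal curve $\mathcal{C}_{d-1}$, known to satisfy $\QR(3)\subseteq\QR(4)$.) It therefore suffices to produce, for every generator of $I(\mathcal{C})_2/I(S)_2$, a representative of rank $\leq 4$ modulo $I(S)$.

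Via the substitutions $x_rx_s\mapsto g_2^2\alpha^{2a-r-s}\beta^{r+s}$, $x_ry_s\mapsto g_1g_2\alpha^{a+b-r-s}\beta^{r+s}$ and $y_ry_s\mapsto g_1^2\alpha^{2b-r-s}\beta^{r+s}$ (with $a=d_1-2$, $b=d_2-2$), together with $\gcd(g_1,g_2)=1$, the extra quadrics turn out to be parameterized by pairs $(A',C')\in\mathbb{K}[\alpha,\beta]_{d_1-4}\oplus\mathbb{K}[\alpha,\beta]_{d_2-4}$: the quadric $Q_{A',C'}$ whose $xx$-, $xy$-, $yy$-parts evaluate on $\mathcal{C}$ to $g_2^2 g_1 A'$, $-g_1g_2(g_2A'+g_1C')$, and $g_1^2 g_2 C'$ respectively, lies in $I(\mathcal{C})$. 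It thus suffices to produce rank-$\leq 4$ representatives for the two monomial basis families $(A'=\alpha^i\beta^{d_1-4-i},C'=0)$ and $(A'=0,C'=\alpha^j\beta^{d_2-4-j})$.

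The crux is a grouping argument. For a basis element of the first family, the scroll identifications $x_rx_s\equiv x_{r'}x_{s'}\pmod{I(S)}$ (whenever $r+s=r'+s'$, and similarly for mixed monomials $x_ry_s$) allow each monomial $\alpha^{2a-k}\beta^k$ arising in $g_1A'$ (resp.\ in $g_2A'$) to be rewritten as $x_0 x_k$ (resp.\ $x_0 y_k$) when $k\leq d_1-2$, and as $x_{d_1-2}x_{k-d_1+2}$ (resp.\ $x_{d_1-2} y_{k-d_1+2}$) when $k\geq d_1-1$. Carrying out this grouping uniformly yields a representative of the form
\[
Q_{A',0}\;\equiv\;x_0(L-M) + x_{d_1-2}(L'-M')\pmod{I(S)},
\]
with $L,L'$ linear forms in the $x$'s and $M,M'$ linear forms in the $y$'s; as a sum of two products of linear forms, this has rank $\leq 4$. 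The second family is handled symmetrically with $y_0, y_{d_2-2}$ as outer factors.

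The principal obstacle is the combinatorial bookkeeping: one must check that, for every $k$ in the admissible ranges (namely $k\in[d_1-4-i,2d_1-4-i]$ for the $xx$-part and $k\in[d_1-4-i,d-4-i]$ for the $xy$-part), the index choices above remain in $[0,d_1-2]$ and $[0,d_2-2]$; this follows from $0\leq i\leq d_1-4$ together with $d_1\leq d_2$ after a case split at $k=d_1-2$. Combining these rank-$\leq 4$ representatives of the extra generators with the rank-$4$ minor generators of $I(S)$ yields a generating set of $I(\mathcal{C})$ consisting entirely of quadrics of rank at most $4$, establishing property $\QR(4)$.
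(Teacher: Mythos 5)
Your overall strategy matches the paper's: use \Cref{thm:parametrization} to place $\mathcal{C}$ on the scroll $S(d_1-2,d_2-2)$, generate $I(S)$ by rank-$4$ minors, and reduce to showing that the extra generators coming from $(H-2F)\cdot(g_1x-g_2y)$ admit rank-$\leq 4$ representatives. Your treatment of the first family $(A',0)$ is a correct alternative to the paper's: where the paper factors $g_1=q_1(g_1/q_1)$ and $g_2=q_2(g_2/q_2)$ with $q_i$ quadratic and writes each generator exactly as $L_{1,j}L_2-L_{3,j}L_4$, you instead work modulo $I(S)$ and regroup monomials with the coordinate outer factors $x_0$ and $x_{d_1-2}$. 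For that family the index check succeeds: a monomial of the $xy$-part with $\beta$-degree $k\leq d_1-2$ can be written as $x_0y_k$ precisely because $k\leq d_1-2\leq d_2-2$.

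The second family, however, is \emph{not} handled ``symmetrically,'' and this is a genuine gap. For $Q_{0,C'}$ the $xy$-part evaluates to a multiple of $g_1C'$, whose monomials $\alpha^{a+b-k}\beta^k$ (with $a=d_1-2$, $b=d_2-2$) must be realized as $x_ry_s$ with $r+s=k$, $0\leq r\leq a$, $0\leq s\leq b$. Taking $s=0$ forces $r=k\leq a$, and taking $s=b$ forces $k\geq b$; hence any monomial with $a<k<b$ admits no representative whose $y$-factor is $y_0$ or $y_{d_2-2}$. Such monomials do occur whenever $d_1<d_2$: for instance $d=8$, $d_1=3$, $d_2=7$, $C'=\alpha^2\beta$ gives $xy$-support $k\in[1,4]$ while $a=1$ and $b=5$. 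The inequality $d_1\leq d_2$ is exactly what saves the first family and breaks the second under your choice of outer factors. The repair is either to allow non-coordinate outer factors --- which is what the paper's factorization into $S^{d_2-j-4}T^jq_1\cdot y$ and $(g_1/q_1)\cdot x$ accomplishes, treating both families uniformly and without any modulo-$I(S)$ bookkeeping --- or to let the two outer $y$-indices depend on $j$ instead of always being $0$ and $d_2-2$. Relatedly, you dispose of $d_1=1$ but not of $d_1=2$ (the case $p\in\mathcal{C}_d^2\setminus\mathcal{C}_d$), where the first family is empty and only the problematic second family remains; the paper settles that case separately by quoting the known property $\QR(3)$ for linearly normal curves of almost minimal degree.
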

\begin{proof}
    For points $p \in \mathcal{C}_d^2$, the projected curve $\pi_p(\mathcal{C}_d)$ satisfies property $\QR(3)$, as shown in \cite{MR4496651}. 
    Therefore, we assume that $p \in \mathbb{P}^d \setminus \mathcal{C}_d^2$. 
    Let $f_p \in \mathbb{K}[s,t]_d$ be the binary form corresponding to $p \in \mathbb{P}^d \setminus \mathcal{C}_d^2$ with $d \geq 4$.
    By \Cref{thm:parametrization}, the projected curve $\pi_p(\mathcal{C}_d)$ is projectively equivalent to the curve
    $$
    \mathcal{C} = \left\{ [g_2(\alpha, \beta) \alpha^{d_1-2} : \dots : g_2(\alpha, \beta) \beta^{d_1-2} : g_1(\alpha, \beta) \alpha^{d_2-2} : \dots : g_1(\alpha, \beta) \beta^{d_2-2}] \mid [\alpha : \beta] \in \mathbb{P}^1 \right\}
    $$
    where $(f_p)^\perp = (g_1, g_2)$ with $\deg(g_i) = d_i$, $(i=1,2)$, $d_1+d_2 = d+2$, and $2\leq d_1 \leq d_2$. 
   
    Therefore, the curve $\mathcal{C}$ is contained in a rational normal surface scroll
    \begin{align*}
    \mathcal{S} & = \left\{ [\alpha^{d_1-2}\gamma : \dots : \beta^{d_1-2}\gamma : \alpha^{d_2-2}\eta : \dots : \beta^{d_2-2}\eta] \mid [\alpha : \beta], [\gamma : \eta] \in \mathbb{P}^1 \right\} \\
    & = S(d_1-2, d_2-2)   
    \end{align*}
    and $\mathcal{C}$ is obtained by the section defined by $g_1x - g_2y = 0$, where $x$ and $y$ are local coordinates on $\mathcal{S}$.  
    In fact, $\mathcal{C}$ is a divisor on $\mathcal{S}$ in the class $H+2F$, where $H$ and $F$ are respectively a hyperplane section and a ruling of $\mathcal{S}$.  
    Note that Greek letters are used for parameters in the parametrization of $\mathcal{S}$, whereas $x$ and $y$ are local coordinates on $\mathcal{S}$.
    
    Therefore, the ideal $I(\pi_p(\mathcal{C}_d))_2$ is generated by the generators of $I(\mathcal{S})$ and quadratic forms corresponding to a basis of $(H - 2F)(g_1x - g_2y)$. 
    Note that the surface scroll $\mathcal{S}$ satisfies property $\QR(4)$ since it is obtained by the $2 \times 2$ minors of a $2 \times (d-2)$ matrix consisting of linear forms in the coordinate ring of $\mathcal{S}$.

    Since $H - 2F$ is spanned by $\{S^{d_1-j-4}T^jx \mid 0 \leq j \leq d_1-4\} \cup \{S^{d_2-j-4}T^jy \mid 0 \leq j \leq d_2-4\}$, $I(\pi_p(\mathcal{C}_d))_2/I(\mathcal{S})_2$ is generated by quadratic forms on $\mathcal{S}$ corresponding to
    \begin{align*}
        &\{S^{d_1-j-4} T^j g_1(S,T) x^2 - S^{d_1-j-4} T^j g_2(S,T) x y \mid 0 \leq j \leq d_1-4\} \\
        \cup \; &\{S^{d_2-j-4} T^j g_1(S,T) x y - S^{d_2-j-4} T^j g_2(S,T) y^2 \mid 0 \leq j \leq d_2-4\}.
    \end{align*}
    Here, if $d_1 \leq 3$ (resp.~$d_2 \leq 3$), then the first (resp.~second) set is empty.  
    Since we assume $d_1+d_2 \geq 6$ and $d_1 \leq d_2$, the only possible case with $d_1 \leq 3$ or $d_2 \leq 3$ is $(d_1,d_2)=(3,3)$, in which both sets are empty.

    There exist two quadratic forms $q_i(S,T),i=1,2$ such that $q_i \mid g_i$ for $i = 1,2$, since $g_1$ and $g_2$ are binary forms. 
    Let $L_{1,j}$ be the linear form on $\mathcal{S}$ corresponding to $S^{d_1-j-4} T^j q_1 x$, $L_2$ be the linear form on $\mathcal{S}$ corresponding to $(g_1 / q_1)x$, $L_{3,j}$ be the linear form on $\mathcal{S}$ corresponding to $S^{d_1-j-4} T^j q_2 x$, and $L_4$ be the linear form on $\mathcal{S}$ corresponding to $(g_2 / q_2)y$. 
    Then the quadratic generators corresponding to $\{S^{d_1-j-4} T^j g_1 x^2 - S^{d_1-j-4} T^j g_2 x y \mid 0 \leq j \leq d_1-4\}$ are
    $$\{L_{1,j}L_2 - L_{3,j}L_4 \mid 0 \leq j \leq d_1-4\},$$
    which is a set of quadratic forms whose ranks are at most four.

    Similarly, let $L_{5,j}$ be the linear form on $\mathcal{S}$ corresponding to $S^{d_2-j-4} T^j q_1 y$ and $L_{6,j}$ be the linear form on $\mathcal{S}$ corresponding to $S^{d_2-j-4} T^j q_2 y$. 
    Then the quadratic generators corresponding to $\{S^{d_2-j-4} T^j g_1 x y - S^{d_2-j-4} T^j g_2 y^2 \mid 0 \leq j \leq d_2-4\}$ are
    $$\{L_{5,j}L_2 - L_{6,j}L_4 \mid 0 \leq j \leq d_2-4\},$$
    which is a set of quadratic forms whose ranks are at most four. 
    Thus, any quadratic generator of the ideal of $\mathcal{C}$ is a sum of quadratic forms of rank at most $4$. 
    This implies that the curve $\pi_p(\mathcal{C}_d)$ satisfies property $\QR(4)$.
\end{proof}

\begin{example}[Monomial projections]\label{eg:monomialprojection}
    Let $e_c$ be the c-th standard basis for $\mathbb{P}^d$ with $d\ge 4$ and $1 \leq c \leq \lfloor\frac{d}{2}\rfloor$.
    Then the corresponding binary form is given by $f := f_{e_c} = s^{d-c} t^c \in \mathbb{K}[s,t]_d.$
    Therefore, by \Cref{thm:parametrization}, the parametrization of the curve $\mathcal{C} = \pi_{e_c}(\mathcal{C}_d)$ is 
     $$\{[\alpha^d:\alpha^{d-1}\beta:\cdots: \alpha^{d-c+1}\beta^{c-1}: \alpha^{d-c-1}\beta^{c+1} :\cdots:\beta^d] \mid [\alpha:\beta]\in \mathbb{P}^1\}.$$
     Thus, $\mathcal{C}$ is a divisor of
     \begin{align*}
         \mathcal{S} & = \{[\alpha^{c-1}\gamma:\alpha^{c-2}\beta \gamma:\cdots: \beta^{c-1} \gamma: \alpha^{d-c-1}\eta: \alpha^{d-c-2}\beta\eta: \cdots: \beta^{d-c-1}\eta]|[\alpha:\beta],[\gamma:\eta]\in \mathbb{P}^1\} \\
         & (= \mathcal{S}(c-1,d-c-1))
     \end{align*}
     and $\mathcal{C}$ is a section defined by $xT^{c+1}-yS^{d-c+1}$ in $\mathcal{S}$.
     Since $H-2F$ is spanned by $\{S^{c-j-3}T^jx \mid 0\le j\le c-3\}\cup \{S^{d-c-j-3}T^j y \mid 0\le j\le d-c-3\}$, $I(\mathcal{C})_2/I(\mathcal{S})_2$ is generated by quadratic forms on $\mathcal{S}$ corresponding to 
    \begin{align*}
        & \{S^{c-j-3} T^{c+j+1} x^2 - S^{d-j-2} T^j x y \mid 0\le j\le c-3\} \\
        \cup \; & \{S^{d-c-j-3} T^{c+j+1} x y - S^{2d-2c-j-2} T^j y^2 \mid 0\le j\le d-c-3\}
    \end{align*}

Therefore, 
\[
I(\mathcal{S}) = \left( \text{$2 \times 2$ minors of } 
\begin{bmatrix}
    z_0 & z_1 & \cdots & z_{c-2} & z_{c} & z_{c+1} & \cdots & z_{d-2} \\
    z_1 & z_2 & \cdots & z_{c-1} & z_{c+1} & z_{c+2} & \cdots & z_{d-1}
\end{bmatrix}
\right),
\]
and $I(\mathcal{C})$ is generated by $I(\mathcal{S})$ together with
\[
\{\, z_j z_{c-1} - z_j z_c \mid 0 \le j \le c-3 \,\} \;\cup\;
\{\, z_{c-1} z_{c+j+2} - z_c z_{c+j} \mid 0 \le j \le d-c-3 \,\}.
\]

In fact, since
\[
I(\mathcal{C}) 
= I(\mathcal{C}_d) \cap \mathbb{K}[y_0, \dots, y_{c-1}, y_{c+1}, \dots, y_d],
\]
where
\[
I(\mathcal{C}_d) = \left( \text{$2 \times 2$ minors of }
\begin{bmatrix}
    y_0 & y_1 & \cdots & y_{d-1} \\
    y_1 & y_2 & \cdots & y_d
\end{bmatrix}
\right) \subseteq \mathbb{K}[y_0, y_1, \dots, y_d],
\]
this gives a constructive proof of a special case of the result in \cite{MR1394747}, 
which states that eliminating a variable from the ideal of a toric variety yields an ideal generated by binomials.
\end{example}

\section{Monomial projections}\label{sec:main}
In \Cref{cor:QR(4)}, we have seen that the projected curves satisfy property $\QR(4)$. 
However, we will demonstrate that the projected curves also satisfy property $\QR(3)$ in certain cases. 

Let $\mathcal{C}_{d,c} = \pi_{e_c}(\mathcal{C}_d) \subset \mathbb{P}^{d-1}$ be the projected curve where $e_c$ for $0 \leq c \leq d$ is the standard basis of $\mathbb{P}^d$. 
For some $c$, we can immediately see that property $\QR(3)$ is satisfied:
\begin{enumerate}
    \item If $c = 0 \text{ or } d$, then $\mathcal{C}_{d,c}$ is a rational normal curve. 
    Therefore, it satisfies property $\QR(3)$ (see \cite{MR4298641}).
    \item If $c = 1 \text{ or } d-1$, then $\mathcal{C}_{d,c}$ is a del Pezzo curve. 
    Therefore, it satisfies property $\QR(3)$ (see \cite{MR4496651}).
\end{enumerate}
Thus, we assume $2 \leq c \leq d-2$ and $d \geq 6$.

We distinguish between the cases $c = 2 \text{ or } c = d-2$ and $3 \leq c \leq d-3$ because, in the case $c = 2 \text{ or } d-2$, $\mathcal{C}_{d,c}$ is not generated by quadrics, whereas for $3 \leq c \leq d-3$, the curve $\mathcal{C}_{d,c}$ is generated by quadrics.
For $\mathcal{C}_{d,2}$ (or $\mathcal{C}_{d,d-2}$), we instead consider the scheme $X_d = \mathcal{C}_{d,2} \cup \mathbb{L}$ where $\mathbb{L}$ is the trisecant line to $\mathcal{C}_{d,2}$. 
Note that $X_d$ is the scheme generated by quadrics on $\mathcal{C}_{d,2}$.

As we aim to determine whether the curves $\mathcal{C}_{d,c} = \pi_c(\mathcal{C}_d) \subset \mathbb{P}^{d-1}$ satisfy property $\QR(3)$ and to provide additional numerical information beyond simply determining whether the property holds, we introduce the following notation:
\begin{notation and remark}
\phantom{}
\begin{enumerate}[(A)]\label{NotationRemark}
    \item There is a combinatorial way to find a generating set of $I(\mathcal{C}_{d,c})$.
    Suppose
    $$I(\mathcal{C}_d) = \left( 2 \times 2 \text{ minors of } \begin{bmatrix}
    x_0 & x_1 & \cdots & x_{d-1} \\
    x_1 & x_2 & \cdots & x_d
    \end{bmatrix} \right) \subset \mathbb{K}[x_0,x_1,\dots,x_d],$$
    and define the \emph{index} of a quadratic monomial $x_i x_j$ by $i+j$.
    Observe that 
        \begin{enumerate}[(1)]
        \item differences of any pairs $(x_i x_j, x_k x_l)$ of quadratic monomials with the same index are generators of $I(\mathcal{C}_d)$ and
        \item the generators of $I(\mathcal{C}_d)_2$ that do not involve $x_c$ can be generators of $I(\mathcal{C}_{d,c})_2$.
        \end{enumerate} 
    Therefore, the differences between all pairs of quadratic monomials $(x_i x_j, x_k x_l)$ with same index $i+j=k+l$ for $i, j, k, l \neq c$ are generators of $I(\mathcal{C}_{d,c})_2$.
    Moreover, for each integer $0 \leq t \leq 2d$, let $u$ be the number of quadratic monomials $x_i x_j$ of index $t=i+j$ with $i,j \in \{0,1,\dots,d\} \setminus \{c\}$.
    Then one obtains $u-1$ linearly independent generators of $I(\mathcal{C}_{d,c})_2$.

    Among all polynomials there are $d+1$ quadratic monomials whose support contains $x_c$. 
    For $1 \leq c \leq d-1$, we lose exactly $d+1$ quadratic generators when eliminating $x_c$ from $I(\mathcal{C}_d)$. 
    Therefore, we have $\binom{d}{2} - (d+1) = \binom{d-1}{2} - 2$ linearly independent quadratic generators in $I(\mathcal{C}_{d,c})_2$.
    These generators span $I(\mathcal{C}_{d,c})_2$ because $\dim_{\mathbb{K}}(I(\mathcal{C}_{d,c})) = \binom{d-2}{2} + d - 4 = \binom{d-1}{2} - 2 = \tfrac{1}{2}(d^2 - 3d - 2).$

    \item\label{NotationRemarkB} Let $X \subseteq \mathbb{P}^d$ be a projective scheme defined by quadratic forms and $I$ be the saturated ideal defining $X$.
    \begin{enumerate}[(1)]
        \item For quadratic forms $q$ and $q'$ in $I$, we write $$q \sim_I q'$$ if $q - q'$ can be expressed as a sum of rank 3 generators of $I$. 
        In particular,
        \begin{enumerate}[\rm(i)]
            \item $\sim_I$ is an equivalence relation and
            \item If $q \sim_I 0$, then $q$ is a sum of rank $3$ quadratic forms in $I$.
        \end{enumerate}
        \item For $0 \leq c \leq d$, let $I_{d,c}$ denote the ideal of $\mathcal{C}_{d,c} = \pi_{e_c}(\mathcal{C}_d)$. 
        We write $$q \sim_{(d,c)} q'$$ for quadratic forms $q$ and $q'$ in $I_{d,c}$ if $q - q'$ can be expressed as a sum of rank 3 generators in $(I_{d,c})_2$.
\end{enumerate}
\end{enumerate}
\end{notation and remark}

We first claim that $\mathcal{C}_{d,c}$ for $d \geq 6$ and $3 \leq c \leq d-3$ satisfies property $\QR(3)$.
To show this claim, we begin by proving a series of computational lemmas.
We recall from \Cref{subsec:qmap}, given a decomposition of the line bundle $\mathcal{O}_{X}(1) = A^2\otimes B$ of $X$,
$$
Q_{A,B} : H^0(X,A) \times H^0(X,A) \times H^0(X,B) \longrightarrow I(X)_2
$$
is defined by 
$$
Q_{A,B} (s,t,h) = \varphi(s \otimes s \otimes h) \varphi(t \otimes t \otimes h) - \varphi(s \otimes t \otimes h )^2.
$$
\begin{lemma}\label{ijklLemma}
Let $i$, $j$, $k$, and $l$ be integers such that $0 \leq i < j \leq d$, $i < j - k$, $i + j + k \equiv 0 \pmod{2}$, and $c \neq i, i+k, j-k, j, \frac{i+j-k}{2}, \frac{i+j+k}{2}$.
Then,
$$x_i x_j + x_{i+k} x_{j-k} - 2 x_{\frac{i+j-k}{2}} x_{\frac{i+j+k}{2}} \sim_{(d,c)} 0.$$
\end{lemma}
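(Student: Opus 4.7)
The plan is to write
$$q := x_i x_j + x_{i+k} x_{j-k} - 2 x_m x_{m+k}$$
(with $m = (i+j-k)/2$, so $m+k = (i+j+k)/2$) as an explicit sum of three rank-$3$ quadrics in $I_{d,c}$. Every monomial of $q$ has common index $i+j$, so $q \in I(\mathcal{C}_d)$; the task is to present $q$ using rank-$3$ elements of $I_{d,c}$.

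My main ingredient will be the composite quadric
$$P := (x_i + x_{i+k})(x_j + x_{j-k}) - (x_m + x_{m+k})^2,$$
which is of the shape $L_1 L_2 - L_3^2$ and therefore has rank at most $3$. The first step is to check $P \in I(\mathcal{C}_d)$: on the parametrization $x_r \mapsto s^{d-r} t^r$ each of the three linear forms factors as $s^{\ast} t^{\ast}(s^k + t^k)$, and the identity $2m = i+j-k$ forces both $(x_i + x_{i+k})(x_j + x_{j-k})$ and $(x_m + x_{m+k})^2$ to evaluate to the common expression $s^{2d-i-j-k} t^{i+j-k}(s^k+t^k)^2$. The hypothesis $c \neq i, i+k, j, j-k, m, m+k$ guarantees that $P$ uses no $x_c$, so $P \in I_{d,c}$ and is a rank-$3$ generator of this ideal.

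The second step is to expand $P$ and group monomials by their index. Three indices appear, namely $2m = i+j-k$, the middle $i+j$, and $2(m+k) = i+j+k$. Using the Hankel identities $i + (j-k) = 2m$ and $(i+k) + j = 2(m+k)$, the expansion decomposes as
$$P = \bigl(x_i x_j + x_{i+k} x_{j-k} - 2 x_m x_{m+k}\bigr) + \bigl(x_i x_{j-k} - x_m^2\bigr) + \bigl(x_{i+k} x_j - x_{m+k}^2\bigr),$$
where the first bracket is $q$ and the latter two are classical Hankel rank-$3$ quadrics in $I(\mathcal{C}_d)$. Since these involve only the variables $\{x_i, x_{j-k}, x_m\}$ and $\{x_{i+k}, x_j, x_{m+k}\}$, each of which avoids $x_c$ by hypothesis, both lie in $I_{d,c}$. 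Rearranging yields
$$q = P - \bigl(x_i x_{j-k} - x_m^2\bigr) - \bigl(x_{i+k} x_j - x_{m+k}^2\bigr),$$
which exhibits $q$ as a sum of three rank-$3$ quadrics in $I_{d,c}$, hence $q \sim_{(d,c)} 0$.

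I do not foresee any essential obstacle; the argument reduces to the arithmetic relation $2m = i+j-k$ and the routine factorization $s^a t^b + s^{a-k} t^{b+k} = s^{a-k} t^b (s^k + t^k)$. The only item to double-check is that the decomposition remains coherent in degenerate sub-cases (e.g.\ $k=0$, when all three indices coincide and $P$, $H_1$, $H_2$ collapse consistently, or $j = i+2k$, when $x_{i+k} = x_{j-k}$ and one Hankel piece shares a variable with another), but these collapses are bookkeeping rather than obstructions. The genuine insight is spotting the composite $P$; once this is in hand the lemma follows from expansion and a single parity identity.
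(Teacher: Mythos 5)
Your proof is correct and is essentially the paper's own argument: both hinge on the same composite rank-$3$ quadric $(x_i+x_{i+k})(x_j+x_{j-k})-(x_{\frac{i+j-k}{2}}+x_{\frac{i+j+k}{2}})^2$ and the same three-term expansion into $q$ plus the two Hankel quadrics $x_ix_{j-k}-x_{\frac{i+j-k}{2}}^2$ and $x_{i+k}x_j-x_{\frac{i+j+k}{2}}^2$. The only cosmetic difference is that the paper produces this quadric as $Q_{A,B}(S^{\frac{j-i-k}{2}},T^{\frac{j-i-k}{2}},S^{d-j}T^i(S^k+T^k))$ for the decomposition $\mathcal{O}_{\mathbb{P}^1}(d)=\mathcal{O}_{\mathbb{P}^1}(\frac{j-i-k}{2})^{\otimes 2}\otimes\mathcal{O}_{\mathbb{P}^1}(d-j+i+k)$, whereas you verify its membership in the ideal directly on the parametrization.
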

\begin{proof}
We fix the decomposition of the line bundle $\mathcal{O}_{\mathbb{P}^1}(d) = A^{2} \otimes B$ where $$A = \mathcal{O}_{\mathbb{P}^1}\left(\frac{j-i-k}{2}\right) \text{ and } B = \mathcal{O}_{\mathbb{P}^1}(d-j+i+k).$$
Then, in the notation introduced at the section \ref{subsec:qmap} we have
\begin{align*}
 & Q_{A,B}(S^{\frac{j-i-k}{2}},T^{\frac{j-i-k}{2}},S^{d-j}T^i(S^k+T^k)) \\
= & (x_i+x_{i+k})(x_{j-k}+x_j)-(x_{\frac{i+j-k}{2}}+x_{\frac{i+j+k}{2}})^2 \\
= & (x_ix_{j-k}-x_{\frac{i+j-k}{2}}^2)+(x_{i+k}x_j-x_{i+j+k}^2)+x_ix_j+x_{i+k}x_{j-k}-2x_{\frac{i+j-k}{2}}x_{\frac{i+j+k}{2}}.
\end{align*}

Since $(x_ix_{j-k}-x_{\frac{i+j-k}{2}}^2)$ and $(x_{i+k}x_j-x_{i+j+k}^2)$ are rank $3$ generators in $I_{d,c}$, we have $x_ix_j+x_{i+k}x_{j-k}-2x_{\frac{i+j-k}{2}}x_{\frac{i+j+k}{2}}\sim_{(d,c)}0$.
\end{proof}
\begin{lemma}\label{0514Lemma} 
For each $0\leq i\leq d-6$, the two quadratic forms $x_ix_{i+5}-x_{i+1}x_{i+4}$ and $x_{i+1}x_{i+6}-x_{i+2}x_{i+5}$ satisfy
$$(x_ix_{i+5}-x_{i+1}x_{i+4})\sim_{(d,i+3)}0 \quad\text{and}\quad (x_{i+1}x_{i+6}-x_{i+2}x_{i+5})\sim_{(d,i+3)}0.$$
\end{lemma}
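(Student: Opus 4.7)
The plan is to construct, via the $Q_{A,B}$-map from Subsection~\ref{subsec:qmap}, a one-parameter family of rank-$3$ quadratic forms $Q_{\lambda}\in I_{d,i+3}$ whose expansion decomposes as a $\lambda$-polynomial combination of the two target differences plus terms already known to be $\sim_{(d,i+3)}0$. Varying $\lambda$ then forces each target to be $\sim_{(d,i+3)}0$. Concretely, I would take the decomposition $\mathcal{O}_{\mathbb{P}^1}(d)=A^{\otimes 2}\otimes B$ with $A=\mathcal{O}_{\mathbb{P}^1}(2)$ and $B=\mathcal{O}_{\mathbb{P}^1}(d-4)$, fix $s=S^2$ and $t=T^2+\lambda ST\in H^0(A)$ for a parameter $\lambda\in\mathbb{K}^{\times}$, and set
$$h\;:=\;S^{d-4-i}T^i-\tfrac{2}{\lambda}\,S^{d-5-i}T^{i+1}+\tfrac{2}{\lambda^2}\,S^{d-6-i}T^{i+2}\;\in\;H^0(B).$$
A direct calculation yields
$$f(s^2h)=x_i-\tfrac{2}{\lambda}x_{i+1}+\tfrac{2}{\lambda^2}x_{i+2},\qquad f(sth)=\lambda x_{i+1}-x_{i+2}+\tfrac{2}{\lambda^2}x_{i+4},$$
$$f(t^2h)=\lambda^2 x_{i+2}-x_{i+4}+\tfrac{2}{\lambda}x_{i+5}+\tfrac{2}{\lambda^2}x_{i+6},$$
so $x_{i+3}$ is absent from each of the three factors, and hence $Q_{\lambda}:=\lambda^4\bigl(f(s^2h)f(t^2h)-f(sth)^2\bigr)$ is a rank-$3$ element of $I_{d,i+3}$.

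Next I would expand $Q_{\lambda}$ and regroup it as
\begin{align*}
Q_{\lambda} &= \lambda^6(x_ix_{i+2}-x_{i+1}^2)-\lambda^4(x_ix_{i+4}-x_{i+2}^2)+2\lambda^3(x_ix_{i+5}-x_{i+1}x_{i+4})\\
&\quad+2\lambda^2(x_ix_{i+6}+x_{i+2}x_{i+4}-2x_{i+1}x_{i+5})-4\lambda(x_{i+1}x_{i+6}-x_{i+2}x_{i+5})+4(x_{i+2}x_{i+6}-x_{i+4}^2).
\end{align*}
The three bracketed pieces $x_ix_{i+2}-x_{i+1}^2$, $x_ix_{i+4}-x_{i+2}^2$, and $x_{i+2}x_{i+6}-x_{i+4}^2$ are rank-$3$ generators of $I_{d,i+3}$, while $x_ix_{i+6}+x_{i+2}x_{i+4}-2x_{i+1}x_{i+5}$ is $\sim_{(d,i+3)}0$ by Lemma~\ref{ijklLemma} (the six involved indices $i,i+1,i+2,i+4,i+5,i+6$ all differ from $i+3$, so the excluded-values condition is satisfied). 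Combining $Q_{\lambda}\sim_{(d,i+3)}0$ with these reductions gives $\lambda^2 u-2v\sim_{(d,i+3)}0$ for every $\lambda\in\mathbb{K}^{\times}$, where $u:=x_ix_{i+5}-x_{i+1}x_{i+4}$ and $v:=x_{i+1}x_{i+6}-x_{i+2}x_{i+5}$. Specializing to $\lambda=1$ and $\lambda=2$ yields $u-2v\sim 0$ and $4u-2v\sim 0$; subtracting gives $3u\sim 0$, so $u\sim_{(d,i+3)}0$, and then $v\sim_{(d,i+3)}0$, which proves both equivalences uniformly for every $0\leq i\leq d-6$.

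The main obstacle will be the bookkeeping in the expansion of $Q_{\lambda}$: one must first verify the stated compact expressions for $f(s^2h)$, $f(t^2h)$, $f(sth)$ so that $x_{i+3}$ is indeed absent, and then check that the product-minus-square of these linear forms assembles exactly into the four recognized rank-$3$ / ijkl-Lemma pieces plus the two target differences, with no stray remainder. The particular $h$ above is essentially the unique form (up to scalar) that simultaneously enforces the three $x_{i+3}$-avoidance conditions and eliminates the $x_{i-1}$-term that would otherwise appear in $f(s^2h)$; this last condition is precisely what makes the decomposition close up cleanly for every $i\geq 0$.
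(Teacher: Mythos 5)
Your proposal is correct and is essentially the paper's own proof: the paper applies the same $Q_{A,B}$-map with $A=\mathcal{O}_{\mathbb{P}^1}(2)$, $B=\mathcal{O}_{\mathbb{P}^1}(d-4)$ to the sections $S^2$, $ST+aT^2$, $S^{d-i-4}T^i-2aS^{d-i-5}T^{i+1}+2a^2S^{d-i-6}T^{i+2}$, which is your choice after the reparametrization $a=1/\lambda$ and an overall rescaling, and obtains the identical six-term expansion. The only cosmetic differences are that the paper disposes of the middle term $x_ix_{i+6}+x_{i+2}x_{i+4}-2x_{i+1}x_{i+5}$ by exhibiting the rank-$3$ form $(x_i+x_{i+4})(x_{i+2}+x_{i+6})-(x_{i+1}+x_{i+5})^2$ directly rather than citing \Cref{ijklLemma}, and leaves the final ``vary the parameter to separate the two coefficients'' step implicit where you spell it out with $\lambda=1,2$.
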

\begin{proof}
For a decomposition $\mathcal{O}_{\mathbb{P}^1}(d) = \mathcal{O}_{\mathbb{P}^1}(2)^{2} \otimes \mathcal{O}_{\mathbb{P}^1}(d-4)$ we have (in the notation introduced at the section \ref{subsec:qmap})
\begin{align*}
&\; Q_{\mathcal{O}_{\mathbb{P}^1}(2),\mathcal{O}_{\mathbb{P}^1}(d-4)}(S^2,\;ST+aT^2,\;S^{d-i-4}T^i-2aS^{d-i-5}T^{i+1}+2a^2S^{d-i-6}T^{i+2}) \\
= &\; (x_{i}x_{i+2}-x_{i+1}^2)+a^2(-x_{i}x_{i+4}+x_{i+2}^2)+2a^3(x_{i}x_{i+5}-x_{i+1}x_{i+4}) \\
& +2a^4(x_{i}x_{i+6}-2x_{i+1}x_{i+5}+x_{i+2}x_{i+4})+4a^5(-x_{i+1}x_{i+6}+x_{i+2}x_{i+5}) \\
& +4a^6(x_{i+2}x_{i+6}-x_{i+4}^2)
\end{align*} 
for $a\in \mathbb{K}$.
Since $x_{i}x_{i+6} + x_{i+2}x_{i+4} - 2x_{i+1}x_{i+5} \sim_{(d,i+3)} (x_{i} + x_{i+4})(x_{i+2} + x_{i+6}) - (x_{i+1} + x_{i+5})^2$ and since the forms $x_{i}x_{i+2} - x_{i+1}^2$, $x_{i}x_{i+4} - x_{i+2}^2$, $x_{i+2}x_{i+6} - x_{i+4}^2$ and $(x_{i} + x_{i+4})(x_{i+2} + x_{i+6}) - (x_{i+1} + x_{i+5})^2$ are rank three quadratic generators of $I_{d,i+3}$,
it follows that
$$2a^3(x_{i}x_{i+5} - x_{i+1}x_{i+4}) - 4a^5(x_{i+1}x_{i+6} - x_{i+2}x_{i+5}) \sim_{(d,i+3)} 0$$
for any $a \in \mathbb{K}$.

Thus $x_{i}x_{i+5}-x_{i+1}x_{i+4}\sim_{(d,i+3)}0$ and $x_{i+1}x_{i+6}-x_{i+2}x_{i+5}\sim_{(d,i+3)}0$.
\end{proof}

\begin{lemma}\label{0523Lemma}
For each $0 \leq i \leq d-5$, the two quadratic forms $x_i x_{i+5} -x_{i+2} x_{i+3}$ and $x_{i+2} x_{i+5} - x_{i+3} x_{i+4}$ satisfy
$$(x_ix_{i+5}-x_{i+2}x_{i+3})\sim_{(d,i+1)}0 \text{ and } (x_{i+2}x_{i+5}-x_{i+3}x_{i+4})\sim_{(d,i+1)}0.$$
\end{lemma}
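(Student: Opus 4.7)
The plan is to establish the two displayed relations by separate methods: the second one falls out of \Cref{ijklLemma} essentially for free, while the first one requires an explicit $Q$-map computation in the sense of \Cref{subsec:qmap}, with a case split at the endpoint $i=d-5$ that I expect to be the main obstacle.

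For the second relation, I apply \Cref{ijklLemma} with $(i,j,k)=(i+2,\,i+5,\,1)$. The parity condition is satisfied since $(i+2)+(i+5)+1=2i+8$ is even, and the forbidden set $\{i+2,i+3,i+4,i+5\}$ misses $c=i+1$, so the lemma yields
\[
x_{i+2}x_{i+5}+x_{i+3}x_{i+4}-2x_{i+3}x_{i+4}\;=\;x_{i+2}x_{i+5}-x_{i+3}x_{i+4}\;\sim_{(d,i+1)}\;0.
\]

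For the first relation, I invoke the $Q$-map with decomposition $\mathcal{O}_{\mathbb{P}^1}(d)=\mathcal{O}_{\mathbb{P}^1}(3)^{\otimes 2}\otimes\mathcal{O}_{\mathbb{P}^1}(d-6)$. When $0\le i\le d-6$, I take $s=S^3$, $t=ST^2+T^3$, and $h=S^{d-6-i}T^i$; a direct computation gives $s^2h\mapsto x_i$, $t^2h\mapsto x_{i+4}+2x_{i+5}+x_{i+6}$, and $sth\mapsto x_{i+2}+x_{i+3}$, so
\[
Q_{\mathcal{O}(3),\mathcal{O}(d-6)}(s,t,h)=(x_ix_{i+4}-x_{i+2}^2)+2(x_ix_{i+5}-x_{i+2}x_{i+3})+(x_ix_{i+6}-x_{i+3}^2).
\]
Since this $Q$ is rank $3$ by construction and involves no index equal to $i+1$, it is a rank $3$ generator of $I_{d,i+1}$; the outer two summands are classical rank $3$ generators of $I_{d,i+1}$; hence the middle summand is $\sim_{(d,i+1)} 0$, giving the first relation on this range.

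When $i=d-5$ the choice $h=S^{-1}T^{d-5}$ is invalid, and the construction has to be modified. I take $s=T^3$, $h=T^{d-6}$, and choose $t\in H^0(\mathcal{O}_{\mathbb{P}^1}(3))$ so that the coefficient of $x_{d-4}$ in $t^2h$ vanishes; writing $t=\sum_{k=0}^3 t_k S^{3-k}T^k$, this amounts to the single quadric constraint $t_1^2+2t_0t_2=0$, and a convenient choice is $t=S^3+S^2T-\tfrac{1}{2}ST^2$. The resulting $Q$-map output then decomposes as
\begin{align*}
Q={}&(x_{d-6}x_d-x_{d-3}^2)+2(x_{d-5}x_d-x_{d-3}x_{d-2})-(x_{d-3}x_d-x_{d-2}x_{d-1})\\
&{}+\tfrac{1}{4}(x_{d-2}x_d-x_{d-1}^2)+(x_{d-1}x_{d-3}-x_{d-2}^2).
\end{align*}
The form $Q$ is a rank $3$ generator of $I_{d,d-4}$; the first, fourth, and fifth summands are classical rank $3$ generators of $I_{d,d-4}$; and the third summand $x_{d-3}x_d-x_{d-2}x_{d-1}\sim_{(d,d-4)} 0$ by \Cref{ijklLemma} applied with $(i,j,k)=(d-3,d,1)$, whose forbidden set $\{d-3,d-2,d-1,d\}$ omits $c=d-4$. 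Cancelling these pieces against $Q$ leaves $2(x_{d-5}x_d-x_{d-3}x_{d-2})\sim_{(d,d-4)} 0$, yielding the first relation at $i=d-5$. The main obstacle is exactly this endpoint: the uniform recipe of the previous paragraph degenerates, and one is forced to work on the quadric $t_1^2+2t_0t_2=0$ and separately dispatch an unwanted rank-$4$ minor by a further appeal to \Cref{ijklLemma}.
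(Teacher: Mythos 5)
Your proof is correct, but it follows a genuinely different route from the paper's. The paper proves both relations with a single $Q$-map computation: using the decomposition $\mathcal{O}_{\mathbb{P}^1}(2)^{\otimes 2}\otimes\mathcal{O}_{\mathbb{P}^1}(d-4)$ with inputs $S^2+aST$, $T^2$, $S^{d-i-4}T^i-2aS^{d-i-5}T^{i+1}$, it produces a one-parameter family of rank~$3$ quadrics whose expansion is
$(x_ix_{i+4}-x_{i+2}^2)+2a(-x_ix_{i+5}+x_{i+2}x_{i+3})+a^2(x_{i+2}x_{i+4}-x_{i+3}^2)+6a^3(x_{i+2}x_{i+5}-x_{i+3}x_{i+4})+4a^4(x_{i+3}x_{i+5}-x_{i+4}^2)$;
after cancelling the three square-type terms, varying $a$ isolates both binomials simultaneously, with no endpoint case and using only the indices $i,\dots,i+5$. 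You instead get the second relation directly from \Cref{ijklLemma} (your verification of its hypotheses with $(i+2,i+5,1)$ is correct, and this is arguably cleaner), and the first relation from a heavier decomposition $\mathcal{O}_{\mathbb{P}^1}(3)^{\otimes 2}\otimes\mathcal{O}_{\mathbb{P}^1}(d-6)$, which drags in the index $i+6$ and therefore forces the separate treatment of $i=d-5$. I checked your endpoint computation: the constraint $t_1^2+2t_0t_2=0$, the expansion of $Q$, the five-term decomposition, and the \Cref{ijklLemma} step at $(d-3,d,1)$ all work out, so the argument is sound. The trade-off is that the paper's choice buys uniformity (one computation, all $i$, all $d\ge 5$) at the cost of a slightly messier expansion, while yours modularizes the two relations at the cost of an endpoint analysis.

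One caveat: because your first-relation argument needs either $x_{i+6}$ (main case) or $h=T^{d-6}$ (endpoint case), it requires $d\ge 6$, whereas the statement's range $0\le i\le d-5$ is nonempty already for $d=5$ and the paper's proof covers that case. This is harmless here, since the section's standing assumption is $d\ge 6$ and the lemma is never invoked at $d=5$, but if the lemma is meant to stand alone you should either add the hypothesis $d\ge 6$ or patch $d=5$, $i=0$ separately (e.g.\ by the paper's degree-$2$ decomposition).
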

\begin{proof}
For a decomposition $\mathcal{O}_{\mathbb{P}^1}(d) = \mathcal{O}_{\mathbb{P}^1}(2)^{2} \otimes \mathcal{O}_{\mathbb{P}^1}(d-4)$ we have (in the notation introduced at the end of \Cref{subsec:qmap}),
\begin{align*}
& Q_{\mathcal{O}_{\mathbb{P}^1}(2),\mathcal{O}_{\mathbb{P}^1}(d-4)}(S^2+aST,\;T^2,\;S^{d-i-4}T^{i}-2aS^{d-i-5}T^{i+1}) \\
 = \quad & (x_{i}-3a^2x_{i+2}-2a^3x_{i+3})(x_{i+4}-2ax_{i+5})-(x_{i+2}-ax_{i+3}-2a^2x_{i+4})^2 \\ 
 = \quad & (x_{i}x_{i+4}-x_{i+2}^2)+2a(-x_{i}x_{i+5}+x_{i+2}x_{i+3})+a^2(x_{i+2}x_{i+4}-x_{i+3}^2) \\
  & +6a^3(x_{i+2}x_{i+5}-x_{i+3}x_{i+4})+4a^4(x_{i+3}x_{i+5}-x_{i+4}^2) \\
\end{align*}

Since $x_{i}x_{i+4}-x_{i+2}^2$, $x_{i+2}x_{i+4}-x_{i+3}^2$, and $x_{i+3}x_{i+5}-x_{i+4}^2$ are rank 3 quadratic generators in $I_{d,i+3}$, 
$$2a(-x_{i}x_{i+5}+x_{i+2}x_{i+3})+6a^3(x_{i+2}x_{i+5}-x_{i+3}x_{i+4}) \sim_{(d,i+1)} 0$$ 
for any $a\in \mathbb{K}$.

Hence $x_{i}x_{i+5}-x_{i+2}x_{i+3}\sim_{(d,i+1)}0$ and $x_{i+2}x_{i+5}-x_{i+3}x_{i+4}\sim_{(d,i+1)}0$. 
\end{proof}

\begin{theorem}\label{thm:monomialProjection}
For $d\ge 6$ and $3\leq c\leq d-3$, the ideal of $\mathcal{C}_{d,c}$ is generated by quadratic forms and satisfies property $\QR(3)$.
\end{theorem}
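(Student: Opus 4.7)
The plan is to establish property $\QR(3)$ by showing every quadratic generator of $I(\mathcal{C}_{d,c})$ is $\sim_{(d,c)}$-equivalent to zero. By the preceding Notation and Remark, $I(\mathcal{C}_{d,c})_2$ is spanned by the index-difference generators $x_a x_{t-a}-x_b x_{t-b}$ (with $a+b=a'+b'=t$ and $a,b,a',b'\neq c$); fixing an index $t$, it suffices to prove all admissible monomials $x_a x_{t-a}$ are pairwise $\sim_{(d,c)}$-equivalent.

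The easy case is $t$ even with $t/2\neq c$: the quadric $x_a x_{t-a}-x_{t/2}^{2}$ has rank $3$ and lies in $I(\mathcal{C}_{d,c})$, so $x_a x_{t-a}-x_b x_{t-b}$ is immediately the difference of two rank-$3$ quadrics. When $t$ is odd and $t\notin\{2c-1,2c+1\}$, \Cref{ijklLemma} with $k=1$ yields
$$x_a x_{t-a}+x_{a+1}x_{t-a-1}\sim_{(d,c)} 2\,x_{(t-1)/2}x_{(t+1)/2},$$
and telescoping two consecutive applications gives $x_a x_{t-a}\sim_{(d,c)} x_{a+2}x_{t-a-2}$, collapsing each parity class of $a$ to a single element; for $t\notin\{2c\pm 3\}$ the $k=3$ relation is also available, and the special instance $x_{(t-3)/2}x_{(t+3)/2}\sim_{(d,c)} x_{(t-1)/2}x_{(t+1)/2}$ (read off from \Cref{ijklLemma} with $i=(t-3)/2,\,j=(t+3)/2,\,k=1$) allows one to merge the two parity classes. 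When $t=2c$ no central square is available, but \Cref{ijklLemma} with even $k\in\{2,4\}$ has admissible middle terms $x_{c\mp 1}x_{c\pm 1}$ and $x_{c\mp 2}x_{c\pm 2}$; a careful combination of these two families (for instance, applying $k=2$ and $k=4$ at $i=c-3,\,j=c+3$ yields $x_{c-3}x_{c+3}\sim_{(d,c)} x_{c-1}x_{c+1}\sim_{(d,c)} x_{c-2}x_{c+2}$) collapses every $x_a x_{2c-a}$ to a single equivalence class.

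The genuinely delicate cases are the remaining boundary indices $t\in\{2c-1,2c+1\}$ and $t\in\{2c\pm 3,2c\pm 5\}$, where several natural choices of $(i,j,k)$ in \Cref{ijklLemma} are disqualified by the middle-term condition $c\neq(i+j\pm k)/2$. At $t=2c\pm 1$, the $k=1$ relation is forbidden; replacing it by $k=3$ and $k=5$ produces two ``central'' monomials $x_{c\mp 2}x_{c\pm 1}$ and $x_{c\mp 3}x_{c\pm 2}$, and one checks that no further application of \Cref{ijklLemma} connects these two centers because every candidate triple $(i,j,k)$ collides with $c$. This gap is filled exactly by \Cref{0514Lemma}. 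Similarly, \Cref{0523Lemma} (together with its image under the involution $x_a\leftrightarrow x_{d-a}$, $c\leftrightarrow d-c$, which is a symmetry of the problem) supplies the parity-crossing equivalence missing at $t=2c\pm 3$ and $t=2c\pm 5$.

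The main obstacle is precisely this systematic bookkeeping near the boundary window $t\in[2c-5,2c+5]$. Each application of \Cref{ijklLemma} imposes five avoidance conditions on $c$, and for these boundary indices several natural triples are simultaneously disqualified; Lemmas~\ref{0514Lemma} and \ref{0523Lemma}, derived from the alternative $Q$-map decomposition $\mathcal{O}_{\mathbb{P}^1}(d)=\mathcal{O}_{\mathbb{P}^1}(2)^{\otimes 2}\otimes\mathcal{O}_{\mathbb{P}^1}(d-4)$, supply exactly the rank-$3$ relations invisible to the standard $\mathcal{O}(1)^{\otimes 2}\otimes\mathcal{O}(d-2)$-based $Q$-map. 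Verifying case by case that together they cover every admissible pair of index-$t$ monomials is the delicate heart of the argument; the hypothesis $3\le c\le d-3$ is used to guarantee enough room on both sides of $c$ for at least one valid parity-changing relation at each $t$.
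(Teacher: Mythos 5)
Your overall strategy---fixing an index $t$ and connecting all admissible monomials $x_a x_{t-a}$ of that index directly, rather than inducting on $d$---is a legitimate reorganization, and you have correctly identified the three tools (\Cref{ijklLemma}, \Cref{0514Lemma}, \Cref{0523Lemma}) and roughly where each is needed. But as written this is a plan, not a proof: the entire content of the theorem lives in the case analysis near $t=2c$, and you explicitly defer it (``one checks that \dots'', ``verifying case by case \dots is the delicate heart of the argument''). Moreover, one intermediate claim you do make is false as stated. For odd $t\notin\{2c-1,2c+1\}$ you assert that the $k=1$ telescoping collapses each parity class of $a$ to a single element; but the step from $x_a x_{t-a}$ to $x_{a+2}x_{t-a-2}$ passes through the intermediate monomial $x_{a+1}x_{t-a-1}$, and \Cref{ijklLemma} requires $c\ne i+k$ and $c\ne j-k$. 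Whenever $a+1=c$ or $t-a-1=c$ falls in the admissible range---which happens for essentially every odd $t\ge c$, not only for the six boundary indices you single out---each parity class breaks into two sub-chains on either side of the forbidden value, and re-bridging them needs $k\ge 3$ relations whose own middle-term conditions must be checked (and which fail precisely in your window $|t-2c|\le 5$). The same issue recurs in your $t=2c$ discussion, where the $k=2$ relations are unavailable for $a=c-2$. None of this bookkeeping is carried out, and it is exactly where the proof is.

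For contrast, the paper avoids most of these collisions structurally: after reducing to $c\le\lfloor d/2\rfloor$ it inducts on $d$, so at each step only the generators involving $x_d$, of index $t\in[d,d+c-1]$, are new; these all satisfy $t\ge 2c$, the problematic collisions are confined to $t\in\{2c,2c+1\}$ and occur only when $d\in\{2c,2c+1\}$ (Cases 1-2 and 2-2), and the price paid is four explicit base cases $(6,3),(7,3),(8,3),(8,4)$ in which \Cref{0514Lemma} and \Cref{0523Lemma} do the boundary work once and for all. If you wish to keep your per-index organization, you must (i) write out the bridging argument across the break at $a=c$ (resp.\ $a=t-c$) for every odd $t$, (ii) complete the $t=2c$ analysis for all $a$, not just $a\in\{c-1,c-2,c-3\}$, and (iii) verify exhaustively that \Cref{0514Lemma} and \Cref{0523Lemma} (plus the $x_a\leftrightarrow x_{d-a}$ symmetry) supply every missing link in the window $|t-2c|\le 5$. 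Until then the argument has a genuine gap.
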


\begin{proof}
Since $I_{d,c} \cong I_{d,d-c}$ for any $0 \le c \le d$, we assume that $3 \leq c \leq \lfloor \frac{d}{2} \rfloor$. 
We recall that the quadratic generators of $I_{d,c}$ are given by binomials of the form $x_ix_j - x_kx_l$ where $i + j = k + l$ and $i, j, k, l \neq c$. 
There are four initial cases for using induction: 
$$(d ,c) = (6,3), (7,3), (8,3), \text{ and } (8,4).$$

\noindent \textbf{Initial Case 1 :} $d=6, c=3$
\begin{table}[h!]
    \centering
    \begin{tabular}{|c|c|}
    index & monomials \\
    $2$ & $x_0x_2,x_1^2$ \\
    $3$ & $x_1x_2$ \\
    $4$ & $x_0x_4,x_2^2$ \\
    $5$ & $x_0x_5, x_1x_4$ \\
    $6$ & $x_0x_6, x_1x_5, x_2x_4$ \\
    $7$ & $x_1x_6, x_2x_5$ \\
    $8$ & $x_2x_6, x_4^2$ \\
    $9$ & $x_4x_5$ \\
    $10$ & $x_4x_6,x_5^2$ \\
\end{tabular}
    \caption{List of monomials with given indices.}
    \label{tab:monoList}
\end{table}

The index $2,4,8,10$ generators are $x_0x_2-x_1^2, x_0x_4-x_2^2, x_2x_6-x_4^2,x_4x_6-x_5^2$ and they are already rank $3$ quadratic generators.
The remaining generators are index $5,6,7$ generators.
(See \Cref{tab:monoList}.)
By \Cref{0514Lemma} with $i=0$, index $5$ generator $x_0x_5-x_1x_4\sim_{(6,3)}0$ and index $7$ generator $x_1x_6-x_2x_5\sim_{(6,3)}0$.
By \Cref{ijklLemma}, $x_0x_6+x_2x_4-2x_2x_4=x_0x_6-x_2x_4\sim_{(6,3)}0$ and $x_0x_6+x_2x_4-2x_1x_5=(x_0x_6-x_2x_4)+2(x_2x_4-x_1x_5)\sim_{(6,3)}0$. Hence $I_{6,3}$ satisfies property $\QR(3)$.

\noindent {\bf Initial Case 2 :} $d=7,c=3$.

Since $I_{7,3}$ contains the same generators as $I_{6,3}$, we only need to examine the new generators that include $x_7$ as supports. 
The generators of index $\geq 10$ are given by the $2 \times 2$ minors of the matrix 
$$\begin{pmatrix} 
x_4 & x_5 & x_6 \\ 
x_5 & x_6 & x_7 
\end{pmatrix}$$
These minors correspond to the generators of a rational normal curve and are spanned by rank $3$ quadratic forms. 

The remaining monomials are $x_0x_7$, $x_1x_7$, and $x_2x_7$. 
We need to find quadratic generators containing these monomials that can be expressed as a sum of rank $3$ quadrics without using $x_3$. 
By \Cref{ijklLemma}, we have $x_0x_7 + x_2x_5 - 2x_1x_6 \sim_{(7,3)} 0$. 
The generator $x_1x_7 - x_4^2$ is already a rank $3$ quadratic form. 
In addition, we obtain $x_2x_7 - x_4x_5 \sim_{(7,3)} 0$ by applying \Cref{0523Lemma} with $i=2$. 
Thus, we conclude that $I_{7,3}$ satisfies property $\QR(3)$.

\noindent {\bf Initial Case 3 :} $d=8,c=3$

As in the previous case, since $I_{8,3}$ contains the same generators as $I_{7,3}$, we only need to check the new generators that involve $x_8$. 
The generators of index $\geq 11$ are given by the $2 \times 2$ minors of the matrix
$$\begin{pmatrix} 
x_4 & x_5 & x_6 & x_7 \\ 
x_5 & x_6 & x_7 & x_8 
\end{pmatrix}$$
These minors are generators of a rational normal curve, which already satisfies property $\QR(3)$. 

The remaining monomials are $x_0x_8$, $x_1x_8$, and $x_2x_8$. 
The binomials $x_0x_8 - x_4^2$ and $x_2x_8 - x_5^2$ are already rank $3$ generators.
By \Cref{ijklLemma}, we have $x_1x_8 + x_2x_7 - 2x_4x_5 \sim_{(8,3)} 0$. 
Thus, $I_{8,3}$ satisfies property $\QR(3)$.

\noindent {\bf Initial Case 4 :} $d=8,c=4$

Since $I_{7,4} \cong I_{7,3}$, we only need to check the new generators that involve $x_8$. 
The generators of index $\geq 12$ are given by the $2 \times 2$ minors of the matrix
$$\begin{pmatrix} 
x_5 & x_6 & x_7 \\ 
x_6 & x_7 & x_8 
\end{pmatrix}$$
These minors correspond to generators of a rational normal curve, which already satisfies property $\QR(3)$.

The remaining monomials are $x_0x_8$, $x_1x_8$, $x_2x_8$, and $x_3x_8$. 
By \Cref{ijklLemma}, we have $x_0x_8 + x_2x_6 - 2x_3x_5 \sim_{(8,4)} 0$ and $x_1x_8 + x_3x_6 - 2x_2x_7 \sim_{(8,4)} 0$.
In addition, the binomial $x_2x_8 - x_5^2$ is already a rank $3$ quadratic form. 
Moreover, we obtain $x_3x_8 - x_5x_6 \sim_{(8,4)} 0$ by \Cref{0523Lemma} with $i = 3$.
Hence, $I_{8,4}$ satisfies property $\QR(3)$.

\bigskip

For the inductive hypothesis, assume that the ideal $I_{d-1,c}$ satisfies property $\QR(3)$ for $d \geq 9$ and $c$ such that $3 \leq c \leq \lfloor \frac{d}{2} \rfloor$.
Since $I_{d,c}$ contains the generators of $I_{d-1,c}$, we only need to find the rank $3$ quadratic forms that involve $x_d$.
The generators of index $\geq d+c$ are those of a rational normal curve, so the remaining task is to check the quadratic generators with indices between $d$ and $d+c-1$. 
We will consider the cases based on the parity of $d$ and $c$.

\noindent \textbf{Case 1-1 :} $d$ is odd and $3\leq c<\frac{d-1}{2}$.
\begin{table}[h!]
    \centering
    \begin{tabular}{|c|c|}
    index & quadratic forms involving $x_{d}$\\
    $d$ & $x_0x_{d}+x_1x_{d-1}-2x_{\frac{d-1}{2}}x_{\frac{d+1}{2}}$\\
    $d+1$ & $x_1x_{d}-x_{\frac{d+1}{2}}^2$\\
    $d+2$ & $x_2x_{d}+x_3x_{d-1}-2x_{\frac{d+1}{2}}x_{\frac{d+3}{2}}$\\
    \vdots & \vdots\\
    $d+2i$ & $x_{2i}x_d+x_{2i+1}x_{d-1}-2x_{i+\frac{d-1}{2}}x_{i+\frac{d+1}{2}}$\\
    $d+2i+1$ & $x_{2i+1}x_d-x_{i+\frac{d+1}{2}}^2$\\
    \vdots & \vdots\\
    $d+c-2$ & $\left\{\begin{array}{lr}
        x_{c-2}x_d+x_{c-1}x_{d-1}-2x_{\frac{d+c-3}{2}}x_{\frac{d+c-1}{2}}, & \text{for even } c\\
        x_{c-2}x_d-x_{\frac{d+c-2}{2}}^2, & \text{for odd } c\\
        \end{array}\right.$\\
\end{tabular}
    \caption{Quadratic generators in $I_{d,c}$ for odd $d$ and $3\leq c<\frac{d-1}{2}$.}
    \label{tab:gensOdd}
\end{table}

Since $\frac{d-1}{2} > c$, all binomials in \Cref{tab:gensOdd} can be expressed as sums of rank 3 quadrics by \Cref{ijklLemma}. 

If $d+c-1$ is an even number, the quadratic form $x_{c-1}x_d - x_{\frac{d+c-1}{2}}^2$ is already a rank 3 quadratic form. 
If $d+c-1$ is odd, we use the following binomial:
$$x_{c-1}x_d + x_{c+2}x_{d-3} - 2x_{\frac{d+c-4}{2}}x_{\frac{d+c+2}{2}} \sim_{(d,c)} 0.$$
The relation holds because $c < d-4$, as implied by $c \leq \frac{d-1}{2}$ and $d \geq 9$.

\noindent \textbf{Case 1-2 :} $d$ is odd and $c = \frac{d-1}{2}$. \\
Since $c=\frac{d-1}{2}$, all quadrics in \Cref{tab:gensOdd} except the index $d$ part does not contain any $x_c$. So they can be expressed as sums of rank $3$ quadratic generators by \Cref{ijklLemma}.
The only issue arises at index $d$. To resolve this, we replace the form with the following quadratic generator without $x_c$ from \Cref{ijklLemma}:
$$x_0x_d + x_3x_{d-3} - 2x_{\frac{d-3}{2}}x_{\frac{d+3}{2}}\sim_{(d,c)} 0.$$

\noindent \textbf{Case 2-1 :} $d$ is even and $3\leq c<\frac{d}{2}$.
\begin{table}[h!]
    \centering
    \begin{tabular}{|c|c|}
    index&quadrics containing $x_{d}$\\
    $d$ & $x_0x_{d}-x_{\frac{d}{2}}^2$\\
    $d+1$ & $x_1x_d+x_2x_{d-1}-2x_{\frac{d}{2}}x_{\frac{d+2}{2}}$\\
    \vdots & \vdots\\
    $d+2i$ & $x_{2i}x_d-x_{i+\frac{d}{2}}^2$\\
    $d+2i+1$ & $x_{2i+1}x_d+x_{2i+2}x_{d-1}-2x_{i+\frac{d}{2}}x_{i+\frac{d+2}{2}}$\\
    \vdots & \vdots\\
    $d+c-2$ & $\left\{\begin{array}{lr}
            x_{c-2}x_d-x_{\frac{d+c-2}{2}}^2, & \text{for even } c\\
            x_{c-2}x_d+x_{c-1}x_{d-1}-2x_{\frac{d+c-3}{2}}x_{\frac{d+c-1}{2}}, & \text{for odd } c\\
            \end{array}\right.$\\
\end{tabular}
    \caption{Quadratic generators in $I_{d,c}$ for even $d$ and $3\leq c<\frac{d}{2}$.}
    \label{tab:gensEven}
\end{table}

Since $\frac{d}{2} > c$, all quadrics listed in \Cref{tab:gensEven} are spanned by rank 3 quadratic forms by \Cref{ijklLemma}. 
For the index $d+c-1$, if $c$ is odd, the form $x_{c-1}x_d - x_{\frac{d+c-1}{2}}^2$ is already a rank $3$ quadratic generator. 
However, if $c$ is even, then we can use
$$x_{c-1}x_d + x_{c+2}x_{d-3} - 2x_{\frac{d+c-4}{2}}x_{\frac{d+c+2}{2}} \sim_{(d,c)} 0.$$ 
as a generator because $c < d-4$.

\noindent \textbf{Case 2-2 :} $d$ is even and $c=\frac{d}{2}$. 

Since $c=\frac{d}{2}$, all quadrics in \Cref{tab:gensEven} except index $d$ and $d+1$ does not contain any $x_c$. So they can be expressed as sums of rank $3$ quadratic generators by \Cref{ijklLemma}.
The issue arises only at indices $d$ and $d+1$. 
To resolve this, we replace these forms with the following quadratic generators from \Cref{ijklLemma}:
$$x_0x_d + x_2x_{d-2} - 2x_{\frac{d-2}{2}}x_{\frac{d+2}{2}} \sim_{(d,c)} 0 \text{ and }
x_1x_d + x_4x_{d-3} - 2x_{\frac{d-2}{2}}x_{\frac{d+4}{2}} \sim_{(d,c)} 0.$$
\end{proof}

\begin{proof}[Proof of \Cref{thm:main}]
    Combine \Cref{cor:QR(4)} and \Cref{thm:monomialProjection}.
\end{proof}

Now, we consider the case $c = 2$. In this case, we have $\operatorname{rank}_{\mathcal{C}_d}(p) = 3$ and hence $\mathcal{C}$ admits a unique $3$--secant line $\mathbb{L}$ and the scheme $X_d := \mathbb{L} \cup \mathcal{C}$ is defined by the quadrics contained in the vanishing ideal $I(\mathcal{C})$ the curve $\mathcal{C}$ (see the comment prior to Theorem 1.3).  Our aim is to show, that in this case the scheme $X_d$ does not satisfy property $\operatorname{QR}(3)$.
\begin{proposition}\label{prop:monomialrank3}
Let $X_d$ be the projective scheme defined by the quadrics on $\mathcal{C}_{d,2}$ for $d\geq 5$. 
Then we have the following:
\begin{enumerate}
    \item $X_d$ does not satisfy property $\operatorname{QR}(3)$; moreover, in the notation introduced in \Cref{delta}, we have
    \item $\delta(X_d, 4) - \delta(X_d, 3) = 2$.
\end{enumerate}
\end{proposition}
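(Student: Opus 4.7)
The plan is to reduce both assertions to computing $\delta(X_d, 3)$ and $\delta(X_d, 4)$. First, $I(X_d)_2 = I(\mathcal{C}_{d,2})_2$ because $X_d$ is by definition the scheme cut out by the quadrics of $\mathcal{C}_{d,2}$. The monomial count in \Cref{eg:monomialprojection} gives $\dim_{\mathbb{K}} I(X_d)_2 = \binom{d-1}{2} - 2$, and \Cref{cor:QR(4)} shows $\delta(X_d, 4) = \binom{d-1}{2} - 2$. So both assertions reduce to the single claim $\delta(X_d, 3) = \binom{d-1}{2} - 4$, which I would prove by matching bounds.

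For the upper bound $\delta(X_d, 3) \leq \binom{d-1}{2} - 4$, the key observation is that
$$q_1 = x_0 x_4 - x_1 x_3 \quad \text{and} \quad q_2 = x_0 x_5 - x_1 x_4$$
are the unique binomial generators at their indices, since after deleting $x_2$ only two monomials survive at indices $4$ and $5$. Writing an arbitrary $Q \in I(X_d)_2$ as $Q = \alpha_1 q_1 + \alpha_2 q_2 + (\text{other generators})$, I would look at the $4 \times 4$ principal submatrix of $M_Q$ indexed by $\{x_0, x_1, x_3, x_4\}$. Within this submatrix, the $x_0$ row has a nonzero entry only at $(x_0, x_4) = \alpha_1/2$ (the indices $1$ and $3$ admit no binomial generators), and the $x_1$ row is nonzero only at $(x_1, x_3) = -\alpha_1/2$ and $(x_1, x_4) = -\alpha_2/2$. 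A direct column expansion yields $\alpha_1^4/16$ for this minor, independent of all other coefficients. Since rank $\leq 3$ forces every $4 \times 4$ minor to vanish, $\alpha_1 = 0$. An entirely parallel computation on the principal submatrix indexed by $\{x_0, x_1, x_4, x_5\}$, with $\alpha_1 = 0$ already, then yields $\alpha_2 = 0$. Hence the $\mathbb{K}$-span of rank $\leq 3$ quadrics in $I(X_d)_2$ has codimension $\geq 2$.

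For the lower bound $\delta(X_d, 3) \geq \binom{d-1}{2} - 4$, I would show that every quadratic generator other than $q_1, q_2$ lies in the span of rank-$3$ quadrics. This follows the pattern of \Cref{thm:monomialProjection}: \Cref{ijklLemma} handles most index-$t$ generators whenever the forbidden positions for $c = 2$ can be avoided; \Cref{0523Lemma} applied with $i = 1$ resolves $x_1 x_6 - x_3 x_4 \sim_{(d,2)} 0$ and $x_3 x_6 - x_4 x_5 \sim_{(d,2)} 0$; and every binomial $x_i x_j - x_k^2$ with $i + j = 2k$ and $i, j, k \neq 2$ is itself a rank-$3$ element of $I(X_d)_2$. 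Combining these identities with induction on $d$, after verifying a handful of small base cases ($d = 5, 6, 7$) by hand, exhausts the generators outside $\langle q_1, q_2 \rangle$.

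The main obstacle is the lower bound: with $c = 2$ at the boundary of the admissible index range, \Cref{ijklLemma} becomes frequently inapplicable at small indices, so I would need to derive extra identities in the spirit of \Cref{0514Lemma}, adapted to avoid $c = 2$, to cover the residual generators with indices near $6, 7, 8$. The upper bound, by contrast, is a clean one-shot determinant computation, made possible by the rigid uniqueness of $q_1$ and $q_2$ as the sole generators of their indices.
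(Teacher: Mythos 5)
Your proposal is correct and follows essentially the same route as the paper: the codimension-$\geq 2$ bound comes from the identical $4\times 4$ determinant computation on the principal submatrices indexed by $\{x_0,x_1,x_3,x_4\}$ and $\{x_0,x_1,x_4,x_5\}$ using the unique index-$4$ and index-$5$ generators $q_1, q_2$, and the reverse bound comes from the same induction on $d$ showing every other generator is spanned by rank-$3$ quadrics via \Cref{ijklLemma} and \Cref{0523Lemma} with $i=1$. The ``extra identities near small indices'' you flag as a remaining obstacle are exactly what the paper supplies in its base cases $d=5,6$ and its parity-split inductive step, so no new idea is missing from your outline.
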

\begin{proof}
We first prove that $\delta(X_d,4) - \delta(X_d,3) \ge 2$ for $d \ge 5$.
Let $Q = \sum_{i=0}^{\frac{1}{2}(d^2 - 3d - 4)} a_i Q_i$ where $a_i \in \mathbb{K}$ for $0 \leq i \leq \frac{1}{2}(d^2 - 3d - 4)$ and let $M_Q$ be the corresponding $d \times d$ symmetric matrix.  
We claim that there exist at least two independent linear forms in $\sqrt{I(4, M_Q)}$.

We fix $Q_0 = x_0x_4-x_1x_3$ and $Q_1 = x_0x_5-x_1x_4$. 
Consider the $4 \times 4$ submatrix of $M_Q$ obtained by selecting the rows and columns of $M_Q$ which correspond to $x_0, x_1, x_3, x_4$, respectively: 
$$
\begin{array}{c c} 
& \begin{array}{c c c c} x_0 & x_1 &x_3 & x_4 \\ \end{array} \\
\begin{array}{c c c c}x_0\\x_1\\x_3\\x_4 \end{array} &
\begin{bmatrix}
0& 0 & 0 & a_0 \\
0& 0 & -a_0& * \\
0 & -a_0 & * & *\\
a_0 & * & * & *
\end{bmatrix}
\end{array}.
$$
The determinant of this submatrix is $a_0^4$ which is contained in $I(4, M_Q)$. 
Hence, $a_0 \in \sqrt{I(4, M_Q)}$. 
Now, consider the $4 \times 4$ submatrix of $M_Q$ obtained by selecting the rows and columns of $M_Q$ which correspond to $x_0, x_1, x_4, x_5$, respectively:
$$
\begin{array}{c c} 
& \begin{array}{c c c c} x_0 & x_1 &x_4 & x_5 \\ \end{array} \\
\begin{array}{c c c c}x_0\\x_1\\x_4\\x_5 \end{array} &
\begin{bmatrix}
0& 0 & a_0 & a_1 \\
0& 0 & -a_1& * \\
a_0& -a_1 & * & *\\
a_1 & * & * & *
\end{bmatrix}
\end{array}.
$$
The determinant of this submatrix is $g \cdot a_0 + a_1^4 \in I(4, M_Q)$ for a cubic polynomial $g$. 
Since $a_0 \in \sqrt{I(4, M_Q)}$, it follows that $a_1 \in \sqrt{I(4, M_Q)}$.
Therefore,  by \Cref{delta}, $\delta(X_5,4) - \delta(X_5,3) \geq 2$.

Now, we show that $\delta(X_d,4) - \delta(X_d,3) \le 2$ by induction on $d \ge 5$.  
For the initial cases, we consider $d = 5$ and $d = 6$.  
When $d = 5$, we have 
$$(I_{5,2})_2 = \langle x_4^2 - x_3x_5, x_1x_4 - x_0x_5, x_3^2 - x_1x_5, x_1x_3 - x_0x_4 \rangle \subset \mathbb{K}[x_0, x_1, x_3, x_4, x_5].$$  
Thus, $\delta(X_5,4) - \delta(X_5,3) \leq 2$.

Suppose $d = 6$. 
There are $8$ quadratic generators of $I(X_6)$ as follows:
\begin{align*}
Q_1=x_0x_4-x_1x_3, Q_2=x_0x_5-x_1x_4, Q_3=x_0x_6-x_3^2, Q_4=x_1x_5-x_3^2,\\
Q_5=x_1x_6-x_3x_4, Q_6=x_3x_5-x_4^2, Q_7=x_3x_6-x_4x_5, Q_8=x_4x_6-x_5^2.
\end{align*}
Hence $\delta(X_6,4)=8$. 

We claim that there are $6$ generators can be expressed as a sum of rank $3$ generators without $x_2$.
We can check that 
\begin{enumerate}
    \item $Q_3=x_0x_6-x_3^2, Q_4=x_1x_5-x_3^2,Q_6=x_3x_5-x_4^2,x_8=x_4x_6-x_5^2$ are rank $3$ itself. 
    \item $Q_5=x_1x_6-x_3x_4\sim_{(6,2)} 0$ by \Cref{0523Lemma} with $i=1$. 
    \item $Q_6=x_3x_6-x_4x_5=x_3x_6+x_4x_5-2x_4x_5\sim_{(6,2)}0$ by \Cref{ijklLemma}.
\end{enumerate}
Thus, by \Cref{delta} we have $\delta(X_6,4)-\delta(X_6,3) = 2$.

For the inductive hypothesis, let $d \geq 7$ and suppose that $\delta(X_{d-1}, 4) - \delta(X_{d-1}, 3) = 2$. 
It is enough to show that the generators in $I_{2,d} \setminus I_{2,d-1}$ can be expressed as sums of rank $3$ quadratic generators.

\begin{enumerate}
    \item[{\bf Case 1 :}] $d$ is odd
\end{enumerate}
For the indices $d$ and $d+1$, the following hold:
\begin{equation*}
    x_0x_d + x_1x_{d-1} - 2x_{\frac{d-1}{2}}x_{\frac{d+1}{2}} \sim_{(d,2)} 0 \text{  and  } x_1x_d - x_{\frac{d+1}{2}}^2 \sim_{(d,2)} 0.
\end{equation*}
For indices greater than or equal to $d+2$, all generators of $I_{2,d}$ can be obtained from the $2 \times 2$ minors of the matrix 
$$\begin{pmatrix} 
x_3 & x_4 & \cdots & x_{d-1} \\ 
x_4 & x_5 & \cdots & x_d 
\end{pmatrix}.$$
These generators can be expressed as sums of rank $3$ quadratic forms as the rational normal curves satisfy property $\QR(3)$.

\begin{enumerate}
    \item[{\bf Case 2 :}] $d$ is even
\end{enumerate}
For the indices $d$ and $d+1$, we have
\begin{equation*}
    x_0x_d - x_{\frac{d}{2}}^2 \sim_{(d,2)} 0 \text{  and  } x_1x_d + x_4x_{d-3} - 2x_{\frac{d-2}{2}}x_{\frac{d+4}{2}} \sim_{(d,2)} 0.
\end{equation*}
For indices greater than or equal to $d+2$, the generators are spanned by rank $3$ quadratic forms by the same reasoning above.
\end{proof}

\section{$\QR(k)$ properties of $\pi_p(\mathcal{C}_d)$ for a rank $3$ point $p$.}\label{sec:generalprojection}
In this section, we investigate whether the space of quadrics on the projected curve $\pi_p(\mathcal{C}_d)$ can be spanned by rank 3 quadratic forms, provided the projection center $p$ is of rank 3.
We recall that $\pi_p(\mathcal{C}_d)$ satisfies property $\QR(3)$, for points $p$ of rank at most two.

Let $\rank_{\mathcal{C}_d}(p) = l$. Then $\mathcal{C}$ is contained in $S(l-2,d-l)$ (cf.~\cite[Theorem~1.1.(1)]{MR2299577}). 
If $d=4$, then $l=3$ and $\mathcal{C}$ is contained in $S(1,1)$. 
In this case, $\mathcal{C}$ satisfies only one quadratic equation, namely that of $S(1,1)$, and hence has infinitely many trisecant lines. 
Therefore, we may assume $d \geq 5$.

\begin{lemma}\label{lem:trisecantline}
Suppose $p \in \mathcal{C}_d^3 \setminus \mathcal{C}_d^2$ with $d \geq 5$, and let 
$\mathcal{C} = \pi_p(\mathcal{C}_d) \subset \mathbb{P}^{d-1}$ be the image of $\mathcal{C}_d$ under the linear projection centered at $p$.
\begin{enumerate}
    \item There exists the unique trisecant line $\mathbb{L} := S(1)$ of $\mathcal{C}$.
    \item $I(\mathcal{C} \cup \mathbb{L})$ is generated by $I(\mathcal{C})_2$.
\end{enumerate}
\end{lemma}

\begin{proof}
\begin{enumerate}
    \item If $d \geq 5$, then $\mathcal{C}$ is contained in $S(1,d-3)$ and $\mathcal{C} \equiv H+2F$. 
    Hence $S(1) \equiv H-(d-3)F$ is a trisecant line to $\mathcal{C}$. 
    Conversely, if there exists a trisecant line, then it must be contained in $S(1,d-3)$, since the ideal of $S(1,d-3)$ is generated by quadrics. 
    Consequently, $\mathbb{L} = S(1)$ is the unique trisecant line of $\mathcal{C}$.

    \item Remark that 
    \[
    \mathcal{C} \cup \mathbb{L} \;\equiv\; 2H + (5-d)F.
    \]
    \begin{enumerate}
        \item By Theorem~4.3.(1) in \cite{MR3247023}, $\mathcal{C} \cup \mathbb{L}$ is aCM. 
        Then, by Proposition~3.4 in \cite{MR3247023}, we know that $I(\mathcal{C} \cup \mathbb{L})$ is generated by quadrics. 

        \item Clearly, $I(\mathcal{C} \cup \mathbb{L})_2 \subseteq I(\mathcal{C})_2$. 
        On the other hand, $I(\mathcal{C})_2 \subseteq I(\mathcal{C} \cup \mathbb{L})_2$ since $\mathbb{L}$ is the trisecant line to $\mathcal{C}$. 
    \end{enumerate}
    In consequence, we obtain $I(\mathcal{C})_2 = I(\mathcal{C} \cup \mathbb{L})_2$, and hence $I(\mathcal{C} \cup \mathbb{L})$ is generated by $I(\mathcal{C})_2$.
\end{enumerate}
\end{proof}
Let $X$ be the scheme defined by the quadrics in $\mathcal{C}$, i.e., $X = \mathcal{C} \cup \mathbb{L}$.

There are three cases depending on how $\mathcal{C}$ and $\mathbb{L}$ intersect, and each case can be described in terms of the multiplicity type of the lower-degree generator $F_l$ of the apolar ideal $(f_p)^\perp = (F_l, F_h)$:
\begin{enumerate}
    \item\label{triplept} $F_l = \ell^3$ for a linear binary form $\ell$ $\iff$ $\mathcal{C}$ intersects $\mathbb{L}$ at a triple point.
    \item\label{doublept} $F_l = \ell_1^2\ell_2$ for linearly independent linear binary forms $\ell_1, \ell_2$ \\ $\iff$ $\mathcal{C}$ intersects $\mathbb{L}$ at a double point and a simple point.
    \item\label{simplepts} $F_l = \ell_1\ell_2\ell_3$ for linearly independent linear binary forms $\ell_1, \ell_2, \ell_3$ \\ $\iff$ $\mathcal{C}$ intersects $\mathbb{L}$ at three distinct simple points.
\end{enumerate}

By \cite[Lemma 1.31 and Proposition 1.36]{MR1735271}, given the degree of the lower-degree generator of $f_p^\perp$, we have a unique general additive decomposition of $f_p$.
Moreover, each term in this general additive decomposition corresponds to a point on $C_d$ (with multiplicities).
For example, in the case of \eqref{doublept}, from the additive decomposition of $f_p$, the point $p$ is spanned by a double point and a simple point on $C_d$ (corresponding to $\ell_1^2$ and $\ell_2$, respectively).
Therefore, the trisecant line $\mathbb{L}$ must intersect $\pi_p(C_d)$ at the image of the double point and the simple point.

\begin{example}
    Let $e_i$ for $i = 0, \dots, d$ be the torus-fixed points in $\mathbb{P}^d$ for $d \geq 5$. 
    Let $\mathbb{L}$ be the trisecant line to $\mathcal{C} = \pi_p(\mathcal{C}_d)$.
    \begin{enumerate}
        \item If $p = e_2$, i.e., $(f_p)^\perp = (S^3, T^{d-1})$, then $\mathcal{C} \cap \mathbb{L}$ consists of a triple point.
        \item If $p = e_1 + e_d$, i.e., $(f_p)^\perp = (S^2T, dS^{d-1} - T^{d-1})$, then $\mathcal{C} \cap \mathbb{L}$ consists of a double point and a simple point.
        \item If $p = \sum_{i=0}^{d} (-1)^i e_i + (-1)^d \sum_{i=0}^{d} e_i +(-1)^d 2e_d$, i.e., $(f_p)^\perp = (S^3 - ST^2, T^{d-1})$, then $\mathcal{C} \cap \mathbb{L}$ consists of three distinct simple points.
    \end{enumerate}
\end{example}

We claim that the scheme $X$ does not satisfy $\QR(3)$ if $\mathcal{C} \cap \mathbb{L}$ contains a non-simple point (i.e., the first two cases). 
However, in the third case where $\mathcal{C} \cap \mathbb{L}$ consists of three simple points, the scheme $X$ possibly satisfies property $\QR(3)$.

\subsection{A triple point.}
Let $p$ be a point in $\mathcal{C}_d^3 \setminus \mathcal{C}_d^2$ for $d \geq 5$ such that $\pi_p(\mathcal{C}_d) \cap \mathbb{L}$ consists of a point with multiplicity $3$.
This implies that $(f_p)^\perp = (\ell^3, g) \subset \mathbb{K}[S,T]$ with $\deg \ell = 1$ and $\deg g = d-1$. 
We claim that the scheme $X := \pi_p(\mathcal{C}_d) \cup L$ for $d \geq 5$ does not satisfy property $\QR(3)$.

\begin{proposition}
    For $d \geq 5$, let $p$ be a point in $\mathbb{P}^d$ such that $(f_p)^\perp = (\ell^3, g)$ with $\deg \ell = 1$ and $\deg g = d-1$.
    \begin{enumerate}
        \item The scheme $X$ does not satisfy property $\QR(3)$. Moreover,
        \item $\delta(X,4)-\delta(X,3) \ge 2.$
    \end{enumerate}
\end{proposition}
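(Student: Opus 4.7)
The plan is to adapt the matrix-minor technique of \Cref{prop:monomialrank3} (which handled the monomial triple-point case $p = e_2$) to the present general triple-point setting. Exploiting the $PGL_2$-action on $\mathbb{P}^d$ induced by the $d$-uple embedding, which preserves $\mathcal{C}_d$ and hence both $\rankindex(X)$ and $\delta(X,4) - \delta(X,3)$, I may assume after a projective change of coordinates that $\ell = T$, so that $(f_p)^\perp = (T^3, g)$ for a binary form $g$ of degree $d-1$ with $g(1,0) \neq 0$. \Cref{thm:parametrization} then places $\mathcal{C} = \pi_p(\mathcal{C}_d)$ on the scroll $\mathcal{S}(1, d-3) \subset \mathbb{P}^{d-1}$ (with coordinates $x_0, x_1, \dots, x_{d-1}$) in such a way that the triple intersection point is $e_0$ and the trisecant is the directrix $\mathbb{L} = V(x_2, \dots, x_{d-1})$, namely the line through $e_0$ and $e_1$.

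Following the proof of \Cref{cor:QR(4)}, $I(X)_2 = I(\mathcal{C})_2$ has a basis split into three types: (a) the scroll relations $R_j := x_0 x_j - x_1 x_{j-1}$ for $j = 3, \dots, d-1$; (b) the $2 \times 2$ minors of the catalecticant on $x_2, \dots, x_{d-1}$; and (c) divisor generators $D_j$ for $j = 0, \dots, d-5$ of the form $D_j = x_1 x_{j+4} - (\text{quadratic form in } x_2, \dots, x_{d-1})$. The crucial observation is that no generator of type (b) or (c) involves $x_0$, and (since $j+4 \geq 4$) no type-(c) generator involves $x_1 x_2$ or $x_1 x_3$. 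Thus for a general $Q = \sum a_j R_j + \sum b_\gamma S_\gamma + \sum d_j D_j \in I(X)_2$, the associated symmetric matrix $M_Q$ (from \Cref{prop:computation of the rank index}) satisfies $(M_Q)_{0,j} = a_j/2$ for $j \geq 3$, $(M_Q)_{1,2} = -a_3/2$, and $(M_Q)_{1,3} = -a_4/2$, each depending purely on the type-(a) coefficients.

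Two $4 \times 4$ minor computations then finish the argument. For the principal submatrix of $M_Q$ indexed by $\{0, 1, 2, 3\}$, two iterations of cofactor expansion along the first column (whose only nonzero entry is $a_3/2$) yield $\det = a_3^4/16$, independently of the entries $(M_Q)_{2,2}, (M_Q)_{2,3}, (M_Q)_{3,3}$, giving $a_3 \in \sqrt{I(4, M_Q)}$. For the principal submatrix on $\{0,1,3,4\}$, the only additional entry needed is $(M_Q)_{1,4} = (d_0 - a_5)/2$ (with the convention $a_5 := 0$ when $d = 5$), and a direct cofactor expansion produces
\begin{equation*}
    \det = \bigl(\tfrac{1}{2} a_3 (M_Q)_{1,4} + \tfrac{1}{4} a_4^2\bigr)^2,
\end{equation*}
again independently of the $(3,3), (3,4), (4,4)$ entries. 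Because $\sqrt{I(4,M_Q)}$ is an ideal containing $a_3$, it contains $a_3 (M_Q)_{1,4}$, whence $a_4^2$ and thus $a_4$. Hence $\sqrt{I(4, M_Q)}$ contains the two linearly independent linear forms $a_3, a_4$, so \Cref{prop:computation of the rank index} yields $\delta(X, 4) - \delta(X, 3) \geq 2$, and in particular $X$ does not satisfy property $\QR(3)$. The main obstacle is the bookkeeping that certifies the two chosen $4 \times 4$ determinants depend only on the type-(a) coefficients $a_3, a_4$ (together with the single linear form $(M_Q)_{1,4}$, which is neutralized by multiplication with $a_3$); this is precisely what the structural form of the three generator types makes transparent.
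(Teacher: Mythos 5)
Your proposal is correct and follows essentially the same route as the paper: normalize $\ell$ to $T$ via the $PGL_2$-action, describe $I(X)_2$ through the scroll $\mathcal{S}(1,d-3)$ and the divisor generators as in \Cref{cor:QR(4)}, and then extract the coefficients of the two scroll relations $x_0x_3-x_1x_2$ and $x_0x_4-x_1x_3$ as linear forms in $\sqrt{I(4,M_Q)}$ using exactly the same two $4\times 4$ principal submatrices (on $\{0,1,2,3\}$ and $\{0,1,3,4\}$). Your computation is in fact slightly more explicit than the paper's (identifying the second determinant as the perfect square $\bigl(\tfrac12 a_3(M_Q)_{1,4}+\tfrac14 a_4^2\bigr)^2$ rather than just ``$a_0f+a_1^4$''), but the argument is the same.
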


\begin{proof}
    We first note that it suffices to consider the point $p \in \mathbb{P}^d$ corresponding to $f_p\in \mathbb{K}[s,t]_d$ such that $(f_p)^\perp = (T^3, g(S,T)) \subset \mathbb{K}[S,T]$ by using an automorphism of $\mathbb{P}^1$ that sends the linear form $\ell$ to $T$ in $\mathbb{K}[S,T]_1$. 
    Moreover, we can take any higher degree generator of $(f_p)^\perp$ among any forms in $\{g(S,T) - T^3 h(S,T) \mid h(S,T) \in \mathbb{K}[S,T]_{d-4}\}$.
    Therefore, we can assume $$(f_p)^\perp = (T^3, \alpha_0 S^{d-1} + \alpha_1 S^{d-2} T + \alpha_2 S^{d-3} T^2)$$ for some $\alpha_0, \alpha_1, \alpha_2 \in \mathbb{K}$.

    We claim that $\sqrt{I(4,M_Q)}$ contains at least one linear form.
    Following the proof of \Cref{cor:QR(4)}, $I(X)$ is generated by the quadratic generators of
    $$I(\mathcal{S}(1,d-3)) = \left( 2\times 2 \text{ minors of } 
    \begin{bmatrix}
        z_0 & z_2 & z_3 & \cdots & z_{d-2} \\
        z_1 & z_3 & z_4 & \cdots & z_{d-1}
    \end{bmatrix}
    \right)$$
    and 
    $$\left\{ z_1z_{j+4} - \alpha_0z_2z_{j+2} - \alpha_1z_2z_{j+3} - \alpha_2z_2z_{j+4} \mid 0 \leq j \leq d-5 \right\}.$$

Let $Q = \sum_{i=0}^{N} a_i Q_i$ be the general element in $\mathbb{K}[a_0, \dots, a_N]$ where $Q_i$ are the quadratic generators of $I(X)$ for $0\le i \le N = \binom{d-1}{2} - 3$.
Without loss of generality, suppose that $Q_0 = -z_1z_2 + z_0z_3$ and $Q_1 = -z_1z_3 + z_0z_4$.

Let $M_1$ be the $4 \times 4$ submatrix of $M_Q$ obtained by selecting the 0th, 1st, 2nd, and 3rd rows and columns of $M_Q$:
$$
M_1=\begin{array}{c c} 
& \begin{array}{c c c c} z_0 & z_1 &z_2 & z_3 \\ \end{array} \\
\begin{array}{c c c c}z_0\\z_1\\z_2\\z_3 \end{array} &
\begin{bmatrix}
0& 0 & 0 & a_0 \\
0& 0 & -a_0& * \\
0 & -a_0 & * & *\\
a_0 & * & * & *
\end{bmatrix}
\end{array}.
$$
where $*$ denotes some linear forms in $\mathbb{K}[a_0, \dots, a_N]$.
The determinant of this submatrix is $a_0^4$.
Thus $a_0 \in \sqrt{I(4, M_Q)}$.

Next, let $M_2$ be the $4 \times 4$ submatrix of $M_Q$ obtained by selecting the 0th, 1st, 3rd, and 4nd rows and columns of $M_Q$:
$$M_2=
\begin{array}{c c} 
& \begin{array}{c c c c} z_0 & z_1 &z_3 & z_4 \\ \end{array} \\
\begin{array}{c c c c}z_0\\z_1\\z_3\\z_4 \end{array} &
\begin{bmatrix}
0& 0 & a_0 & a_1 \\
0& 0 & -a_1& * \\
a_0 & -a_1 & * & *\\
a_1 & * & * & *
\end{bmatrix}
\end{array}.
$$
where $*$ are some linear forms in $\mathbb{K}[a_0, \dots, a_N]$.
The determinant of this submatrix is $a_0f+a_1^4 \in I(4, M_Q)$ where $f$ is a cubic form. Since $a_0 \in \sqrt{I(4, M_Q)}$, it follows that $a_1 \in \sqrt{I(4, M_Q)}$. 

Hence, $X$ does not satisfy property $\QR(3)$ and $\delta(X,4) - \delta(X,3) \geq 2$ by \Cref{delta}.
\end{proof}

Considering the analysis of the case $p = e_2$ (see \Cref{prop:monomialrank3}), we conjecture that
$$\delta(X,4) - \delta(X,3) = 2.$$
For $d \leq 6$, we have observed that $\delta(X,4) - \delta(X,3) = 2$ for randomly generated values of $\alpha_0, \alpha_1, \alpha_2$ provided by the computer algebra system \cite{M2}.
 
\subsection{A double point and a simple point.}
Let $p$ be a point in $\mathcal{C}_d^3 \setminus \mathcal{C}_d^2$ for $d \geq 5$ such that $\pi_p(\mathcal{C}_d) \cap \mathbb{L}$ consists of a double point and a simple point. 
This implies that $(f_p)^\perp = (\ell_1^2 \ell_2, g) \subset \mathbb{K}[S,T]$ with $\deg \ell_1 = \deg \ell_2 = 1$ and $\deg g = d-1$. 
We claim that the scheme $X = \pi_p(\mathcal{C}_d) \cup \mathbb{L}$ for $d \geq 5$ does not satisfy property $\QR(3)$.

\begin{proposition}
    Let $p$ be a point in $\mathbb{P}^d$ such that $(f_p)^\perp = (\ell_1^2 \ell_2, g) \subset \mathbb{K}[S,T]$ with $\deg l_1 = \deg l_2 = 1$, $l_1$ and $l_2$ linearly independent, and $\deg g = d-1$. 
    The scheme $X$ does not satisfy property $\QR(3)$ for $d \geq 5$.
\end{proposition}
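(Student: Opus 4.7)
The plan is to adapt the strategy of the triple-point case. The argument will proceed in three steps: normalize the data, describe the generators of $I(X)$ explicitly on the scroll containing $\pi_p(\mathcal{C}_d)$, and exhibit a $4\times 4$ submatrix of the generic symmetric matrix $M_Q$ whose determinant is, up to a nonzero scalar, a pure fourth power of a single coefficient variable $a_0$. Combined with \Cref{prop:computation of the rank index}, this forces $\sqrt{I(4,M_Q)}$ to contain a nonzero linear form, so $X$ fails property $\QR(3)$.

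For the normalization, I first apply an automorphism of $\mathbb{P}^1$ sending $\ell_1\mapsto T$ and $\ell_2\mapsto S$, so that $\ell_1^2\ell_2 = ST^2$. Then, reducing the higher-degree generator of $(f_p)^\perp$ modulo $ST^2$ and using that the complete intersection condition $V(ST^2,g)=\emptyset$ forces the leading and trailing coefficients of $g$ to be nonzero, I may assume
\[
(f_p)^\perp \;=\; \bigl(ST^2,\; \alpha_0 S^{d-1}+\alpha_1 S^{d-2}T+\alpha_2 T^{d-1}\bigr),\quad \alpha_0,\alpha_2\neq 0.
\]
By \Cref{thm:parametrization} and the proof of \Cref{cor:QR(4)}, the curve $\mathcal{C}=\pi_p(\mathcal{C}_d)$ is a divisor of class $H+2F$ on the scroll $\mathcal{S}(1,d-3)\subset\mathbb{P}^{d-1}$, with coordinates $z_0=Sx$, $z_1=Tx$, and $z_i=S^{d-1-i}T^{i-2}y$ for $2\leq i\leq d-1$. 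The ideal $I(X)$ is generated by the $2\times 2$ minors of
$\left(\begin{smallmatrix} z_0 & z_2 & z_3 & \cdots & z_{d-2}\\ z_1 & z_3 & z_4 & \cdots & z_{d-1}\end{smallmatrix}\right)$
together with the divisor relations $Q^{\mathrm{div}}_j = z_0 z_{j+4}-\alpha_0 z_2 z_{j+2}-\alpha_1 z_2 z_{j+3}-\alpha_2 z_{j+4}z_{d-1}$ for $0\leq j\leq d-5$.

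The key structural observation is that none of the listed generators contains any of the monomials $z_0^2$, $z_0z_1$, $z_0z_2$, or $z_1^2$: the scroll minors involving $z_0$ or $z_1$ always pair them with $z_k$ for $k\geq 2$, and in each divisor relation $z_0$ is paired only with $z_{j+4}$, whose index is at least $4$. Taking $Q_0:=z_0z_3-z_1z_2$ and letting $a_0$ be its coefficient in the generic element $Q=\sum a_i Q_i\in I(X)_2$, the principal $4\times 4$ submatrix $M_1$ of $M_Q$ on rows and columns indexed by $z_0,z_1,z_2,z_3$ takes the arrow shape
\[
M_1 \;=\; \frac{1}{2}
\begin{pmatrix}
0 & 0 & 0 & a_0\\
0 & 0 & -a_0 & -a_1\\
0 & -a_0 & \ast & \ast\\
a_0 & -a_1 & \ast & \ast
\end{pmatrix},
\]
where $a_1$ is the coefficient of the scroll minor $z_0z_4-z_1z_3$ and each $\ast$ is a linear form in the $a_i$'s. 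A cofactor expansion along the first column immediately gives $\det M_1 = c\,a_0^4$ for a nonzero constant $c$, so $a_0\in\sqrt{I(4,M_Q)}$, and \Cref{prop:computation of the rank index} yields $\rankindex(X)\geq 4$.

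The main obstacle will be the careful bookkeeping required to verify the structural vanishing of the four low-index entries $z_0^2$, $z_0z_1$, $z_0z_2$, $z_1^2$ across \emph{every} quadratic generator of $I(X)$. This feature depends crucially on the chosen normalization and on the ``short'' ruling of $\mathcal{S}(1,d-3)$ involving only the two linear coordinates $z_0,z_1$; once it is in place, the arrow-matrix cofactor calculation is formally identical to the one in the triple-point case.
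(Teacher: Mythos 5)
Your proposal is correct and follows essentially the same route as the paper: normalize the apolar ideal, realize $I(X)$ via the scroll generators plus the divisor relations, and extract a $4\times4$ submatrix of $M_Q$ (on the rows and columns indexed by $z_0,z_1,z_2,z_3$) whose determinant is a nonzero multiple of $a_0^4$, so that \Cref{prop:computation of the rank index} rules out property $\QR(3)$. Your write-up is in fact slightly more careful than the paper's (the full normalization $\ell_2\mapsto S$, the explicit check that $z_0^2, z_0z_1, z_0z_2, z_1^2$ occur in no generator, and the scroll type $\mathcal{S}(1,d-3)$ rather than the paper's $\mathcal{S}(1,d-2)$), but the underlying argument is identical.
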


\begin{proof}
    We first remark that it suffices to consider the point $p \in \mathbb{P}^d$ corresponding to $f_p\in \mathbb{K}[s,t]_d$ such that $(f_p)^\perp = (T^2(S + \alpha T), g(S,T))$ for some $\alpha \in \mathbb{K}$, by applying an automorphism on $\mathbb{P}^1$ that sends the linear form $\ell_1$ to $T$ in $\mathbb{K}[S,T]_1$.

    Moreover, we can take any higher-degree generator of $(f_p)^\perp$ among all forms in $\{g(S,T) - T^2(S + \alpha T) h(S,T) \mid h(S,T) \in \mathbb{K}[S,T]_{d-4}\}$.
    Therefore, we can assume
    $$(f_p)^\perp = (T^2(S + \alpha T), \beta_0 S^{d-1} + \beta_1 S^{d-2} T + \beta_2 T^{d-1})$$
    for some $\alpha, \beta_0, \beta_1, \beta_2 \in \mathbb{K}$.

    We claim that $\sqrt{I(4,M_Q)}$ contains at least one linear form.
    Following the proof of \Cref{cor:QR(4)}, $I(X)_2$ is spanned by the generators of
    $I(S(1,d-3))$ and
    $$\left\{z_1 z_{j+3} + \alpha z_1 z_{j+4} - \beta_0 z_2 z_{j+2} - \beta_1 z_2 z_{j+3} - \beta_2 z_{j+2} z_{d-1} \mid 0 \leq j \leq d-5 \right\}.$$

    Then, $I(4,M_Q)$ contains $a^4$ where $a$ is the coefficient of the quadratic form $z_0 z_3 - z_1 z_2$ in $Q$ since $M_Q$ contains a $4 \times 4$ lower triangular submatrix whose diagonal entries are $\{a, -a, -a, a\}$. Thus, $\sqrt{I(4,M_Q)}$ contains a linear form $a$, and so $X$ fails to satisfy property $\QR(3)$.
\end{proof}

For $d = 5$, we observe that $\delta(X,4) - \delta(X,3) = 1$ for randomly generated values of $\alpha, \beta_0, \beta_1, \beta_2$ provided by the computer algebra system \cite{M2}. 
In addition, for a point $p = e_1 + e_6 \in \mathbb{P}^{6}$ (where $f_p^\perp = (S^2T, 6S^{5} - T^{5})$), we can check that $\delta(X,4) - \delta(X,3) = 1$ by \cite{M2}.
These examples support \Cref{con:2}.

\subsection{Three simple points.}
Suppose $X = \pi_p(\mathcal{C}_d) \cup \mathbb{L}$ where the intersection $\pi_p(\mathcal{C}_d) \cap \mathbb{L}$ consists of three simple points. 
We note that it suffices to consider the point $p \in \mathbb{P}^d$ such that
$$(f_p)^\perp = (S(S+\alpha_1 T)(S+\alpha_2 T), \beta_0S^{d-1}+\beta_1S^{d-2}T+\beta_2T^{d-1}),$$
for $\alpha_1,\alpha_2,\beta_0,\beta_1,\beta_2 \in \mathbb{K}$.

For $d = 5$, we check that $X$ satisfies property $\QR(3)$ for randomly generated values of $\alpha_1,\alpha_2,\beta_0,\beta_1,\beta_2$ using the computer algebra system \cite{M2}.

We claim that $X$ satisfies property $\QR(3)$ if 
\begin{align*}
    p \; & = \nu_d([1:-1]) + (-1)^d \nu_d([1:1]) + (-1)^{d-1} 2 \nu_d([0:1])\\
    & = \sum_{i=0}^{d} (-1)^i e_i + (-1)^d \sum_{i=0}^{d} e_i +(-1)^{d-1} 2 e_d \\
    & =
\begin{cases}
    2 \sum_{i = 0}^{\frac{d-2}{2}} e_{2i} & \text{if } d \text{ is even.} \\
    2 \sum_{i = 0}^{\frac{d-3}{2}} e_{2i+1} & \text{if } d \text{ is odd.} \\
\end{cases}
\end{align*}
We first verify this claim for low-degree cases, specifically for $d = 6$ and $d = 7$.
\begin{lemma}\label{lem:simpleptsinitial}
\phantom{}
    \begin{enumerate}
        \item\label{lem:simpleptsinitial1} For a point $p \in \mathbb{P}^6$ such that $(f_p)^\perp = (S^3 - ST^2, T^5)$, $\pi_p(C_6) \cup \mathbb{L}$ satisfies property $\QR(3)$.
        \item\label{lem:simpleptsinitial2} For a point $p\in \mathbb{P}^7$ such that $(f_p)^\perp = (S^3-ST^2, T^6)$, $\pi_p(C_7) \cup \mathbb{L}$ satisfies property $\QR(3)$.
       
    \end{enumerate}
\end{lemma}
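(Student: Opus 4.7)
The plan is to verify each claim by direct computation, using \Cref{thm:parametrization} to describe the defining ideal and the $Q_{A,B}$-map of \Cref{subsec:qmap} as the source of rank-$3$ quadrics. In both cases the lower-degree generator factors as $g_1 = S^3 - ST^2 = S(S-T)(S+T)$ into three distinct linear forms, so by \Cref{cor:QR(4)} the curve $\mathcal{C} = \pi_p(\mathcal{C}_d)$ sits on the scroll $\mathcal{S}(1,d-3) \subset \mathbb{P}^{d-1}$ as a divisor of class $H+2F$. Consequently $I(X)_2$ is spanned by the $\binom{d-2}{2}$ scroll minors together with the $d-4$ section-generators parametrized by $H^0(\mathcal{O}_{\mathcal{S}}(H-2F))$, giving total dimension $\tfrac{1}{2}(d^2-3d-2)$; this equals $8$ for $d=6$ and $13$ for $d=7$.

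My first step is to produce a supply of rank-$3$ quadrics in $I(X)_2$ via the $Q$-map. The three roots of $g_1$ give three distinct points on $\mathcal{C}_d$ whose $\pi_p$-images on $\mathcal{C}$ are collinear on $\mathbb{L}$, and each root contributes a rank-$1$ summand to a Waring decomposition of $f_p$. Feeding such rank-$1$ data into $Q_{A,B}$ with the decompositions $\mathcal{O}_{\mathbb{P}^1}(d) = \mathcal{O}_{\mathbb{P}^1}(k)^{\otimes 2} \otimes \mathcal{O}_{\mathbb{P}^1}(d-2k)$ for $k=1,2$ outputs rank-$3$ quadrics in $I(X)_2$, and combining them in the spirit of \Cref{0514Lemma} and \Cref{0523Lemma} should clear the unwanted remainder terms and exhibit explicit rank-$3$ representatives for each scroll minor and each section-generator.

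Verifying that the list so produced actually spans $I(X)_2$ then reduces to a finite linear-algebra check over $\mathbb{K}$ in a space of small dimension; it can be performed by hand by exploiting the symmetry of $g_1$ under permutation of its three roots, or confirmed by the computer algebra system \cite{M2} in the manner the authors have just employed for $d=5$. The main obstacle I anticipate is not producing rank-$3$ quadrics --- the three-term Waring decomposition of $f_p$ supplies an abundance of them --- but the bookkeeping required to certify that the section-generators parametrized by $H^0(\mathcal{O}_\mathcal{S}(H-2F))$ all land in that span. In the previous two subsections these section-generators were precisely what forced $\delta(X,4) - \delta(X,3) \geq 1$, and the splitting of $g_1$ into three distinct linear factors is exactly the feature that breaks that obstruction in the present case.
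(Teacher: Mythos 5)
Your overall strategy coincides with the paper's: produce rank-$3$ quadrics through the $Q_{A,B}$-map of \Cref{subsec:qmap} and finish with a finite linear-algebra (in practice \cite{M2}) verification that they span $I(X)_2$; your dimension counts ($8$ for $d=6$, $13$ for $d=7$) and the identification of the scroll $\mathcal{S}(1,d-3)$ are correct. The gap is in the step you treat as automatic, namely the claim that feeding Waring data of $f_p$ into $Q_{A,B}$ ``outputs rank-$3$ quadrics in $I(X)_2$.'' The $Q_{A,B}$-map produces quadrics in $I(\mathcal{C}_d)_2$ for the \emph{linearly normal} curve $\mathcal{C}_d\subset\mathbb{P}^d$; such a quadric descends to the ideal of the non-linearly-normal curve $\pi_p(\mathcal{C}_d)$ only if, after a change of coordinates moving $p$ to $e_0$, it involves no monomial containing $y_0$ --- equivalently, only if $\tilde p$ lies in the kernel of its symmetric matrix. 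This is a genuine condition on the triple $(l,m,h)$, and it is not implied by choosing $l$, $m$, $h$ adapted to the three roots of $S^3-ST^2$ or to the summands of a Waring decomposition of $f_p$. The entire content of the paper's proof is the explicit solution of this vanishing condition: for the decompositions $\mathcal{O}_{\mathbb{P}^1}(6)=\mathcal{O}_{\mathbb{P}^1}(2)^{\otimes 2}\otimes\mathcal{O}_{\mathbb{P}^1}(2)$ and $\mathcal{O}_{\mathbb{P}^1}(7)=\mathcal{O}_{\mathbb{P}^1}(2)^{\otimes 2}\otimes\mathcal{O}_{\mathbb{P}^1}(3)$ it exhibits three explicit families (i)--(iii) of admissible triples, parametrized by $\gamma_0,\gamma_1$ (resp.\ $\gamma_0,\gamma_1,\gamma_2$), and only then counts the independent outputs ($5+5+5$ spanning the $8$-dimensional space for $d=6$; $9+9+9$ spanning the $13$-dimensional space for $d=7$) and confirms the span with \cite{M2}. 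Without identifying that locus, the quadrics you construct need not lie in $I(X)_2$ at all, so there is nothing yet to feed into the span check.

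A secondary point: \Cref{0514Lemma} and \Cref{0523Lemma} are statements about the relation $\sim_{(d,c)}$ for the \emph{monomial} projection centers $e_c$, and their proofs use binomial generators that are not available here; invoking them ``in the spirit of'' does not supply the cancellations you need for this projection center. If you replace your Waring-decomposition heuristic by an explicit solution of the descent condition $M_Q\tilde p=0$ among $Q_{A,B}$-outputs (i.e., recover the families (i)--(iii)), the remainder of your plan goes through exactly as in the paper.
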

\begin{proof}[Proof of \eqref{lem:simpleptsinitial1}]
Let $LT: \mathbb{P}^6 \to \mathbb{P}^6$ be a linear transformation that maps a point $p$ to $e_0$, and let $T^*$ denote the $\mathbb{K}$-linear homomorphism induced by $LT$. 
Let $I = (g_1, \dots, g_8)$ be the saturated ideal of the rational normal curve $C_6$, and let $J = (h_1, \dots, h_8)$ where each $h_i$ for $i = 1, \dots, 8$ is the quadratic form obtained by applying $T^*$ to the variables in $g_i$. 

Then, $$I(\pi_p(C_6)) = J \cap \mathbb{K}[y_1, \dots, y_6], $$ where $\mathbb{K}[y_0, y_1, \dots, y_6]$ is the coordinate ring of the image under $LT$. 
The ideal of $X = \pi_p(\mathcal{C}_6) \cup \mathbb{L}$ is given by $\left(J \cap \mathbb{K}[y_1, \dots, y_6]\right)_2$.
Considering the isomorphism $f: H^0(C_6, \mathcal{O}_{\mathbb{P}^1}(6)) \to R_1$, the elimination of a variable is equivalent to requiring that the coefficient of the term mapping to $y_0$ via the $Q_{A,B}$-map is zero, given a decomposition of the line bundle $\mathcal{O}_{\mathbb{P}^1}(6) = A^2 \otimes B$ on $\mathcal{C}_6$.

We fix the decomposition $\mathcal{O}_{\mathbb{P}^1} (6) = \mathcal{O}_{\mathbb{P}^1}(2)^2 \otimes \mathcal{O}_{\mathbb{P}^1} (2)$ and let $l,m, \text{ and } n$ be general members of $H^0(\mathbb{P}^1,\mathcal{O}_{\mathbb{P}^1}(2)) \cong \mathbb{K}[S,T]_2$.
The vanishing condition of the coefficient can be decomposed into following three cases:
\begin{enumerate}[\rm(i)]
    \item $l = S^2-T^2,~ m = \gamma_0 ST + \gamma_1 T^2,~ n = 2\gamma_0\gamma_1 S^2-(\gamma_0^2+\gamma_1^2)ST$;
    \item $l = S^2 - ST,~ m = \gamma_0 ST + \gamma_1 T^2,~ n = (-2\gamma_0\gamma_1 -\gamma_1^2)S^2+\gamma_1^2 ST + (\gamma_0+\gamma_1)^2 T^2$;
    \item $l = S^2+ST,~ m = \gamma_0 ST,~ n = (-2\gamma_0\gamma_1 -\gamma_1^2)S^2-\gamma_1^2 ST + (\gamma_0+\gamma_1)^2 T^2$
\end{enumerate}
for $\gamma_0,\gamma_1 \in \mathbb{K}\setminus \{0\}$.

Since we obtain degree $6$ binary forms in $\gamma_0$ and $\gamma_1$ from $Q_{\mathcal{O}_{\mathbb{P}^1}(2), \mathcal{O}_{\mathbb{P}^1}(2)}$, there are $7$ generators and there are $5$ independent quadratic forms in $I(\pi_p(\mathcal{C}_6))_2$ among them. 
Therefore, we obtain $5$ independent rank $3$ quadratic generators of $I(\pi_p(\mathcal{C}_6))$ for each cases and get $8$ independent generators in total, as verified by the computer software system \cite{M2}.

\smallskip

\noindent\textit{Proof of \eqref{lem:simpleptsinitial2}.}
As in \eqref{lem:simpleptsinitial1}, the ideal of $X$ is given by $\left(J \cap \mathbb{K}[y_1, \dots, y_7]\right)_2$ where $J$ is generated by the quadratic forms obtained by applying the $\mathbb{K}$-linear map $T^*: \mathbb{K}[x_0, \dots, x_7] \to \mathbb{K}[y_0, \dots, y_7]$ to the variables in the quadratic generators of the defining ideal of $C_7\subset \mathbb{P}^7$.  

We fix the decomposition $\mathcal{O}_{\mathbb{P}^1}(7) = \mathcal{O}_{\mathbb{P}^1}(2)^2 \otimes \mathcal{O}_{\mathbb{P}^1}(3)$.
Let $l$ and $m$ be general elements of $H^0(\mathbb{P}^1,\mathcal{O}_{\mathbb{P}^1}(2)) \cong \mathbb{K}[S,T]_2$, and let $n$ be a general element of  $H^0(\mathbb{P}^1,\mathcal{O}_{\mathbb{P}^1}(3)) \cong \mathbb{K}[S,T]_3$.
The cases where $l$, $m$, and $n$ satisfy the vanishing condition for the coefficients are distinguished to be the following three cases:
\begin{enumerate}[\rm(i)]
    \item $\begin{cases}
        l = S^2-T^2,\; m = \gamma_0 ST+\gamma_1 T^2, \\
        n = \gamma_2(S^3-ST^2)-2\gamma_0\gamma_1 S^2T+(\gamma_0^2+\gamma_1^2)ST^2
    \end{cases}$
    \item $\begin{cases}
        l = S^2-ST,\; m = \gamma_1 ST+\gamma_0 T^2, \\
        n = \gamma_2(S^3-ST^2)+((\gamma_1+\gamma_0)^2-\gamma_0^2)(S^2T+ST^2)-(\gamma_1+\gamma_0)^2(T^3+ST^2)
    \end{cases}$
    \item $\begin{cases}
        l = S^2+ST,\; m = -\gamma_1 ST+\gamma_0 T^2, \\
        n = \gamma_2(S^3-ST^2)+((\gamma_1+\gamma_0)^2-\gamma_0^2)(S^2T-ST^2)-(\gamma_1+\gamma_0)^2(T^3-ST^2)  
    \end{cases}$
\end{enumerate}
for $\gamma_0,\gamma_1,\gamma_2 \in \mathbb{K}\setminus \{0\}$.

We obtain $9$ independent quadratic forms in $I(\pi_p(\mathcal{C}_7))_2$ in each case and the 27 quadratic forms span the space $I(\pi_p(\mathcal{C}_d))_2$ which is verified using the computer algebra system \cite{M2}.

\end{proof}

We now prove the claim that $X = \pi_p(\mathcal{C}_d) \cup \mathbb{L}$ satisfies property $\QR(3)$ for the point $p = \sum_{i=0}^{d} (-1)^i e_i + (-1)^d \sum_{i=0}^{d} e_i +(-1)^{d-1} 2 e_d$.
\begin{proposition}\label{prop:rank3simplepoints}
    Let $p$ be a point in $\PP^d$ (for $d\ge 6$) corresponding to a binary form $f_p$ such that $(f_p)^\perp = (S^3-ST^2,T^{d-1})$.
    Then, the scheme $X = \pi_p(\mathcal{C}_d) \cup \mathbb{L}$ satisfies property $\QR(3)$.
\end{proposition}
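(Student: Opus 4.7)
The plan is to proceed by induction on $d$, with base cases $d = 6$ and $d = 7$ handled by \Cref{lem:simpleptsinitial}. For the inductive step, assume the proposition holds for $X_{d-1} := \pi_{p'}(\mathcal{C}_{d-1}) \cup \mathbb{L}'$, where $p' \in \PP^{d-1}$ corresponds to $(f_{p'})^\perp = (S^3 - ST^2,\, T^{d-2})$, and deduce it for $X_d$.

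First, I would pin down the structure of $I(X_d)_2$ using \Cref{thm:parametrization} and \Cref{cor:QR(4)}: since $d_1 = 3$ and $d_2 = d-1$, the curve $\mathcal{C}$ sits as a divisor of class $H + 2F$ on the scroll $\mathcal{S}(1,d-3)$, so $I(X_d)_2$ is spanned by the $2 \times 2$ minors of
\[
\begin{bmatrix} z_0 & z_2 & z_3 & \cdots & z_{d-2} \\ z_1 & z_3 & z_4 & \cdots & z_{d-1} \end{bmatrix}
\]
together with the $d-4$ divisor-class quadrics obtained from $S^{d-j-5}T^j\,g_1\,xy - S^{d-j-5}T^j\,g_2\,y^2$ for $0 \leq j \leq d-5$. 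After a compatible choice of coordinates, the generators of $I(X_{d-1})_2$ appear, re-indexed, as a subset of the generators of $I(X_d)_2$, so by the inductive hypothesis these are already sums of rank-$3$ quadratic forms. It then suffices to handle the genuinely new generators involving the last variable $z_d$: the new batch of scroll minors, and one additional divisor-class generator.

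Next, I would dispatch the new scroll minors by noting that the bottom row block corresponds to a rational normal curve on the coordinates $z_2, z_3, \ldots, z_d$, and rational normal curves satisfy $\QR(3)$ by \cite[Corollary 2.4]{MR4298641}. For the remaining divisor-class generator, I would invoke the $Q_{A,B}$ map using the decomposition $\mathcal{O}_{\PP^1}(d) = \mathcal{O}_{\PP^1}(2)^{\otimes 2} \otimes \mathcal{O}_{\PP^1}(d-4)$, exactly as in \Cref{lem:simpleptsinitial}. Because the three roots of $g_1 = S(S-T)(S+T)$ are the same as in the $d=6,7$ cases, the three families of degree-$2$ forms $l \in \{S^2 - T^2,\ S^2 - ST,\ S^2 + ST\}$ again select pairs of roots, and the vanishing condition that the output $Q_{\mathcal{O}_{\PP^1}(2), \mathcal{O}_{\PP^1}(d-4)}(l,m,n)$ avoids the projected-out coordinate $y_0$ yields the natural $d$-degree analogues of cases (i), (ii), (iii) of \Cref{lem:simpleptsinitial}. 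These families produce binary forms of degree $d$ in the auxiliary parameters $\gamma_0, \gamma_1, \gamma_2$, generating a family of rank-$3$ quadrics in $I(X_d)_2$.

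The main obstacle will be the linear-algebra bookkeeping: one must verify that the rank-$3$ quadrics extracted from these three families, together with the inductively handled ones, actually span the full space $I(X_d)_2$, and in particular capture the one new divisor-class generator involving $z_d$. Concretely, this reduces to showing that each of the three parametric families contributes exactly one new independent rank-$3$ generator in the passage from $d-1$ to $d$, matching the single new dimension in $I(X_d)_2 / I(X_{d-1})_2$ beyond the scroll minors. I expect this to follow from the same determinantal/coefficient-matching computation used to verify the base cases, now performed symbolically in $d$ rather than for fixed small $d$; the bookkeeping is where the real work lies, but no new geometric idea beyond \Cref{lem:simpleptsinitial} should be required.
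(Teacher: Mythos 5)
Your skeleton (induction on $d$, base cases $d=6,7$ from \Cref{lem:simpleptsinitial}, isolating the generators that involve the last coordinate) matches the paper, but the inductive step as you describe it has a genuine gap in two places. First, the claim that the generators of $I(X_{d-1})_2$ ``appear, re-indexed, as a subset of the generators of $I(X_d)_2$'' and hence are already handled by the inductive hypothesis does not go through as stated: the generators that touch the last coordinate genuinely change with $d$ (in the scroll picture $g_2 = T^{d-1}$ depends on $d$; in the paper's coordinates the offending matrix entry is $y_{d-1}+\cdots+y_1$), and, more importantly, a rank-$3$ certificate for a quadric in $I(X_{d-1})_2$ is a sum of rank-$3$ quadrics vanishing on $X_{d-1}\subset\PP^{d-1}$ --- after re-indexing these need not vanish on $X_d\subset\PP^d$. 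The paper bridges exactly this point with a lifting mechanism you do not mention: if $Q(l,m,h)\in I(X_{d-1})_2$, then $Q(l,m,hT)$ (and similarly $Q(l,m,hS^2)$, $Q(l,m,hT^2)$) lies in $I(X_d)_2$, because multiplying the $B$-section by $T$ sends $((f_{p'})^\perp)_{d-1}$ into $((f_p)^\perp)_d$ for the specific apolar ideals $(S^3-ST^2,T^{d-2})$ and $(S^3-ST^2,T^{d-1})$; this shifts indices while preserving rank $\le 3$. Without this (or an equivalent device) the induction does not transport the old certificates to the new curve.

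Second, the one remaining generator --- in the paper's coordinates $y_2y_d - y_3(y_{d-1}+\cdots+y_1)$ --- is precisely the generator that the lifting trick does \emph{not} reach, and the paper disposes of it with bespoke identities (\eqref{111typeEq1}--\eqref{111typeEq4} for $d$ even and \eqref{111typeEq5}--\eqref{111typeEq8} for $d$ odd), built from special evaluations such as $Q(S^k-S^{k-2}T^2,\,T^k,\,1)$ and a telescoping sum of scroll minors, with a case split on the parity of $d$. Your plan to rerun the three-family computation of \Cref{lem:simpleptsinitial} ``symbolically in $d$'' is not what the paper does and is not obviously available: the base cases were verified by machine computation rather than by a closed-form spanning argument, and the dimension counts there ($5$ of $8$ for $d=6$, $9$ per family for $d=7$) do not extrapolate to ``one new independent generator per family.'' So the place you flag as ``bookkeeping'' is in fact where the new ideas of the proof live, and the proposal as written does not supply them.
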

\begin{proof}
    Let $LT: \mathbb{P}^d \to \mathbb{P}^d$ be a linear map on $\mathbb{P}^d$ that sends a point $p$ to $e_0$. 
    Let $I$ be the ideal generated by the $2 \times 2$ minors of the matrix 
    $$\begin{bmatrix}
    x_0 & x_1 & \cdots & x_{d-1} \\
    x_1 & x_2 & \cdots & x_d
    \end{bmatrix}$$
    and let $J$ be the ideal generated by the quadratic forms obtained by applying the $\mathbb{K}$-linear map $T^*: \mathbb{K}[x_0, \dots, x_d] \to \mathbb{K}[y_0, \dots, y_d]$ to the variables in the generators of $I$. 
    Then $$I(\pi_p(\mathcal{C}_d)) = J \cap \mathbb{K}[y_1, \dots, y_d]$$
    where $J$ is the ideal generated by the $2 \times 2$ minors of $$\begin{bmatrix}
    y_0 & y_1 & y_2+y_0 & y_3+y_1 & \cdots & y_{d-1}+\cdots+y_1 \\
    y_1 & y_2+y_0 & y_3+y_1 & y_4+y_2+y_0 & \cdots & y_d
    \end{bmatrix}.$$
    Therefore, we obtain the ideal of $X = \pi_p(\mathcal{C}_d) \cup \mathbb{L}$ as follows:
    \begin{align*}
    I(X) = & (2\times 2 \text{ minors of } \begin{bmatrix}
        y_2 & y_3 & \cdots & y_{d-2} & y_{d-1}+\cdots+y_1 \\
        y_3 & y_4 & \cdots & y_{d-1} & y_d
        \end{bmatrix})  \\ 
        & + (y_1y_{i+2}-y_1y_i-y_3y_i: 2\le i\le d-3)
    \end{align*}
    The cases $d=6,7$ are covered by  \Cref{lem:simpleptsinitial}.\eqref{lem:simpleptsinitial1} and \Cref{lem:simpleptsinitial}.\eqref{lem:simpleptsinitial2}. 

    Suppose $d\ge 8$ and $X = \mathbb{L} \cup \pi_{p}(\mathcal{C}_{d'})$ satisfies $\QR(3)$ for all integers $d'<d$.
    For brevity, we omit the line bundles $A$ and $B$ from the decomposition $\mathcal{O}_{\mathbb{P}^1}(d) = A^{\otimes 2} \otimes B$ in the $Q_{A,B}$-map and simply write $Q$ instead.

    \noindent\textbf{Case 1:} $d$ is even. \\
    Among the generarors of $I=I(X)$, $2\times 2$ minors of $\begin{bmatrix}
    y_2 & y_3 & \cdots & y_{d-2} \\
    y_3 & y_4 & \cdots & y_{d-1}
    \end{bmatrix}$ can be written as a sum of rank $3$ quadratic forms.
    Therefore, we claim that 
    $$y_iy_d-y_{i+1}(y_{d-1}+\dots+y_1) \sim_{I} 0 \text{ for } 2\le i \le d-2$$
    and 
    $$y_1y_{i+2}-y_1y_i-y_3y_i \sim_I 0 \text{ for } 2\le i \le d-3.$$
    By the induction hypothesis, 
    we have $$z_iz_{d-1} - z_{i+1}(z_{d-2}+\cdots+z_0) = \sum_{j=1}^k Q(l_j,m_j,h_j)$$ for some $Q(l_j,m_j,h_j)\in \mathbb{K}[z_0,z_2,z_3,\dots,z_{d-1}]$.
    Therefore, we have that $$\sum_{j=1}^k Q(l_j,m_j,h_j T) = y_{i+1}y_d - y_{i+2}(y_{d-1}+\cdots+ y_1)$$ for $2\le i \le d-2$.
    Note that we didn't check $y_2y_d - y_3(y_{d-1}+\cdots+y_1)\sim_I 0$ yet.

    By induction hypothesis, $$w_1w_{i+2}-w_{i}(w_1+w_3) = \sum_{j=1}^{k} Q(l_j,m_j,h_j) \in k[w_0,w_1,w_3,\dots,w_{d-2}]$$ for $2\le i \le d-5$.
    Then, we have $$\sum_{j=1}^{k} Q(l_j,m_j,h_j S^2) = y_1(y_{i+2})-(y_3+y_1)y_i$$ for $2\le i \le d-5$.

    Moreover, since $\sum_{j=1}^{k} Q(l_j,m_j,h_j T^2) = y_3 y_{i+4} -(y_5+y_3+y_1)y_{i+2}$ for $2\le i \le d-5$ and $y_3y_{i+4}-y_5y_{i+2} = (y_3y_{i+4}- y_4y_{i+3}) +(y_4y_{i+3}-y_5y_{i+2})\sim_I 0$, we obtain $$y_1y_{i+4}-(y_3+y_1)y_{i+2}\sim_I 0$$ for $2 \le i \le d-5$.
    This implies that $$y_1y_{i+2} - (y_3+y_1)y_i \sim_I 0$$ for $2\le i \le d-3$.

    Now, we claim that $$y_2y_d - y_3(y_{d-1}+\cdots+y_1) \sim_I 0.$$
    Suppose $k$ is an integer such that $d = 2k$.
    Note that 
    \begin{equation} \label{111typeEq1}
    Q(S^k-S^{k-2}T^2, T^k,1) = (-y_2+y_4)y_d - y_{k+2}^2,
    \end{equation}
    \begin{equation} \label{111typeEq2}
    y_4y_d - y_5(y_{d-1} +\cdots + y_1),
    \end{equation}
    \begin{equation} \label{111typeEq3}
    y_1y_5 - (y_3+y_1)y_3, \text{ and }
    \end{equation}
    \begin{equation} \label{111typeEq4}
    y_5y_{d-1} - y_{k+2}^2 = \sum_{i=0}^{k-4} (y_{i+5}y_{d-1-i}-y_{i+6}y_{d-2-i})
    \end{equation}
    are spanned by rank $3$ quadratic forms.
    Therefore, 
    \begin{align*}
    & -\eqref{111typeEq1}+\eqref{111typeEq2}+\eqref{111typeEq3}+\eqref{111typeEq4} \\
    & = y_2y_d - (y_5y_{d-3}+y_5y_{d-5}+\cdots+y_5y_3) - y_3(y_3+y_1) \\ 
    & \sim_I  y_2y_d - y_3(y_{d-1}+y_{d-3}+\cdots+y_3+y_1) \\
    \end{align*}
    since $y_5y_{d-1-2t} \sim_I y_3y_{d+1-2t}$ for $1\le t \le k-2$.

    \bigskip
    \noindent \textbf{Case 2:} $d$ is odd. \\ 
    By the inductive hypothesis, we have $z_iz_{d-1}-z_{i+1}(z_{d-2}+\cdots+z_1) = \sum_{j=1}^k Q(l_j,m_j,h_j)\in \mathbb{K}[z_1,z_2,\dots,z_{d-1}].$
    Therefore, $$\sum_{j=1}^k Q(l_j,m_j,h_j T) = y_{i+1}y_{d}-y_{i+2}(y_{d-1}+\cdots+y_0)\in \mathbb{K}[y_0,y_1,\ldots,y_d]$$ for $2\le i \le d-2$.

    On the other hand, by the inductive hypothesis, $w_0w_{i+2}-(w_2+w_0)w_i = \sum_{j=1}^{k} Q(l_j,m_j,h_j) \in \mathbb{K}[w_0,\dots,w_{d-2}]$ for $2\le i \le d-5$.
    Then, $$\sum_{j=1}^k Q(l_j,m_j,h_j S^2) = y_0y_{i+2}-(y_2+y_0)y_i$$ for $2\le i \le d-5$ and $$\sum_{j=1}^k Q(l_j,m_j,h_j T^2) = (y_2+y_0)y_{i+4}-(y_4+y_2+y_0)y_{i+2}$$ for $2\le i \le d-5$.
    Moreover, $$y_0y_{i+4}-y_4y_{i+2} = (y_2y_{i+4} - y_3y_{i+3}) + (y_3y_{i+3} - y_4y_{i+2}) \sim_I 0$$ for $2\le i \le d-5$.
    Therefore, $$y_0y_{i+2} - (y_2+y_0)y_{i} \sim_I 0$$ for $2\le i \le d-3$.

    Now, we claim that 
    \begin{equation}\label{111typeEq5}
    y_2y_d - y_3(y_{d-1}+\dots+y_0) \sim_I 0.
    \end{equation}
    Let $k$ be the integer with $d = 2k+1$.
    We have
    \begin{equation}\label{111typeEq6}
    (-y_2+y_4)y_d + (-y_3+y_5)(y_{d-1}+\cdots+y_0)-2(y_{k+2})(y_{k+3}) \sim_I 0 
    \end{equation}
    since \eqref{111typeEq6} equals to $Q(S^k-S^{k-2}T^2,\, T^k,\, S+T) - Q(S^k-S^{k-2}T^2,\, T^k,\, S) - Q(S^k-S^{k-2}T^2,\, T^k,\, T)$.
    Moreover,
    \begin{equation}\label{111typeEq7}
    y_4y_d - y_5(y_{d-1}+\cdots+y_0) \sim_I 0 \text{ and }
    \end{equation}
    \begin{equation}\label{111typeEq8}
    y_0y_5 - (y_2+y_0)y_3 \sim_I 0.
    \end{equation}

    Therefore we have
    \begin{align*}
    & \eqref{111typeEq5} + \eqref{111typeEq6} - \eqref{111typeEq7} -2\eqref{111typeEq8} \\
   = \;& -2y_3(y_{d-1} +\cdots + y_4) +2y_5 (y_{d-1}+\cdots+y_2) - 2y_{k+2}y_{k+3} \\ 
   = \;& -2\left( (y_3y_{d-1}-y_5y_{d-3})+(y_3y_{d-3}-y_5y_{d-5}) + \cdots + (y_3y_4 - y_5y_2) \right) \\
    & + 2(y_5y_{d-1}-y_{k+2}y_{k+3}) \\
    \sim_I &\; 0.
    \end{align*}    
\end{proof}

\section*{Acknowledgement}
We greatly appreciate the referees’ careful reading of the manuscript and their detailed and insightful suggestions. 
The first named author was supported by the Institute for Basic Science (IBS-R032-D1-2024-a00) and was also supported by the National Research Foundation of Korea (NRF) grant funded by the Korean government (MSIT) (No.~RS-2024-00414849).
The second named author was supported by a KIAS Individual Grant(MG083101) at Korea Institute for Advanced Study and also was supported by the Sungshin Women’s University Research Grant of 2025(H20250019). 
The third named author was supported by the National Research Foundation of Korea(NRF) grant funded by the Korea government (MSIT) (No. 2022R1A2C1002784).

\section*{Data availability}
No data was used for the research described in the article.

\bibliographystyle{plain}
\bibliography{ref.bib}

\begin{thebibliography}{10}

\bibitem{MR2391665}
Alessandra Bernardi.
\newblock Ideals of varieties parameterized by certain symmetric tensors.
\newblock {\em J. Pure Appl. Algebra}, 212(6):1542--1559, 2008.

\bibitem{MR4752129}
Grigoriy Blekherman, Justin Chen, and Jaewoo Jung.
\newblock Sums of squares, {H}ankel index and almost real rank.
\newblock {\em Forum Math. Sigma}, 12:Paper No. e70, 21, 2024.

\bibitem{MR2274517}
Markus Brodmann and Peter Schenzel.
\newblock Arithmetic properties of projective varieties of almost minimal degree.
\newblock {\em J. Algebraic Geom.}, 16(2):347--400, 2007.

\bibitem{MR951640}
Lawrence Ein, David Eisenbud, and Sheldon Katz.
\newblock Varieties cut out by quadrics: scheme-theoretic versus homogeneous generation of ideals.
\newblock In {\em Algebraic geometry ({S}undance, {UT}, 1986)}, volume 1311 of {\em Lecture Notes in Math.}, pages 51--70. Springer, Berlin, 1988.

\bibitem{MR1394747}
David Eisenbud and Bernd Sturmfels.
\newblock Binomial ideals.
\newblock {\em Duke Math. J.}, 84(1):1--45, 1996.

\bibitem{M2}
Daniel~R. Grayson and Michael~E. Stillman.
\newblock Macaulay2, a software system for research in algebraic geometry.
\newblock Available at \url{http://www2.macaulay2.com}.

\bibitem{MR1874542}
Huy~T\`ai H\`a.
\newblock Box-shaped matrices and the defining ideal of certain blowup surfaces.
\newblock {\em J. Pure Appl. Algebra}, 167(2-3):203--224, 2002.

\bibitem{MR4298641}
Kangjin Han, Wanseok Lee, Hyunsuk Moon, and Euisung Park.
\newblock Rank 3 quadratic generators of {V}eronese embeddings.
\newblock {\em Compos. Math.}, 157(9):2001--2025, 2021.

\bibitem{MR1117635}
Le~Tuan Hoa, J\"urgen St\"uckrad, and Wolfgang Vogel.
\newblock Towards a structure theory for projective varieties of {${\rm degree}={\rm codimension}+2$}.
\newblock {\em J. Pure Appl. Algebra}, 71(2-3):203--231, 1991.

\bibitem{MR1735271}
Anthony Iarrobino and Vassil Kanev.
\newblock {\em Power sums, {G}orenstein algebras, and determinantal loci}, volume 1721 of {\em Lecture Notes in Mathematics}.
\newblock Springer-Verlag, Berlin, 1999.
\newblock Appendix C by Iarrobino and Steven L. Kleiman.

\bibitem{MR4816776}
Yeongrak Kim, Hyunsuk Moon, and Euisung Park.
\newblock Some remarks on the {$\mathcal K_{p,1}$} theorem.
\newblock {\em Math. Nachr.}, 297(9):3531--3545, 2024.

\bibitem{MR3463203}
Wanseok Lee and Euisung Park.
\newblock Projective curves of degree=codimension+2 {II}.
\newblock {\em Internat. J. Algebra Comput.}, 26(1):95--104, 2016.

\bibitem{MR4612424}
Hyunsuk Moon and Euisung Park.
\newblock On the rank index of some quadratic varieties.
\newblock {\em Mediterr. J. Math.}, 20(5):Paper No. 260, 19, 2023.

\bibitem{MR2299577}
Euisung Park.
\newblock Projective curves of degree = codimension+2.
\newblock {\em Math. Z.}, 256(3):685--697, 2007.

\bibitem{MR3247023}
Euisung Park.
\newblock On syzygies of divisors on rational normal scrolls.
\newblock {\em Math. Nachr.}, 287(11-12):1383--1393, 2014.

\bibitem{MR4496651}
Euisung Park.
\newblock On the rank of quadratic equations for curves of high degree.
\newblock {\em Mediterr. J. Math.}, 19(6):Paper No. 244, 9, 2022.

\bibitem{MR1614174}
Mario Pucci.
\newblock The {V}eronese variety and catalecticant matrices.
\newblock {\em J. Algebra}, 202(1):72--95, 1998.

\bibitem{MR2948471}
Jessica Sidman and Gregory~G. Smith.
\newblock Linear determinantal equations for all projective schemes.
\newblock {\em Algebra Number Theory}, 5(8):1041--1061, 2011.

\bibitem{MR2378064}
Seth Sullivant.
\newblock Combinatorial symbolic powers.
\newblock {\em J. Algebra}, 319(1):115--142, 2008.

\bibitem{MR2090676}
Fyodor~L. Zak.
\newblock Determinants of projective varieties and their degrees.
\newblock In {\em Algebraic transformation groups and algebraic varieties}, volume 132 of {\em Encyclopaedia Math. Sci.}, pages 207--238. Springer, Berlin, 2004.

\end{thebibliography}

\end{document}